\newtheorem{thm}{Theorem}[section]
\newtheorem{cor}[thm]{Corollary}
\newtheorem{prop}[thm]{Proposition}
\theoremstyle{definition}
\newtheorem{defn}[thm]{Definition}
\newtheorem{example}[thm]{Example}
\theoremstyle{remark}
\newtheorem{rem}[thm]{Remark}
\numberwithin{equation}{section}
\begin{document}
\title[Multi-dimensional $c$-almost periodic type functions and applications]{Multi-dimensional $c$-almost periodic type functions and applications}

\author{M. Kosti\' c}
\address{Faculty of Technical Sciences,
University of Novi Sad,
Trg D. Obradovi\' ca 6, 21125 Novi Sad, Serbia}
\email{marco.s@verat.net}

{\renewcommand{\thefootnote}{} \footnote{2010 {\it Mathematics
Subject Classification.} 42A75, 43A60, 47D99.
\\ \text{  }  \ \    {\it Key words and phrases.} Bohr $({\mathcal B},c)$-almost periodic type functions, $({\mathcal B},c)$-uniformly recurrent type functions, 
abstract Volterra integro-differential equations.
\\  \text{  }  
Marko Kosti\' c is partially supported by grant 451-03-68/2020/14/200156 of Ministry
of Science and Technological Development, Republic of Serbia.
}}

\begin{abstract}
In this paper, we analyze multi-dimensional Bohr $({\mathcal B},c)$-almost periodic type functions.
The main structural characterizations for the introduced classes of Bohr $({\mathcal B},c)$-almost periodic type functions are established. 
Several applications of our abstract theoretical results to
the abstract Volterra integro-differential equations in Banach spaces are provided, as well.
\end{abstract}
\maketitle

\section{Introduction and preliminaries}

The notion of almost periodicity was introduced by the Danish mathematician H. Bohr \cite{h.bor} around 1924-1926 and later generalized by many others (for more details about the subject, we refer the reader to the research monographs \cite{besik}, \cite{diagana}-\cite{gaston}, \cite{nova-mono}-\cite{18}, \cite{pankov} and \cite{30}). Let $I$ be either ${\mathbb R}$ or $[0,\infty),$ and let $f : I \rightarrow X$ be a given continuous function, where $X$ is a complex Banach space equipped with the norm $\| \cdot \|$. Given $\varepsilon>0,$ we call $\tau>0$ a $\varepsilon$-period for $f(\cdot)$ if and only if\index{$\varepsilon$-period}
$
\| f(t+\tau)-f(t) \| \leq \varepsilon,$ $ t\in I.
$
The set consisting of all $\varepsilon$-periods for $f(\cdot)$ is denoted by $\vartheta(f,\varepsilon).$ The function $f(\cdot)$ is said to be almost periodic if and only if for each $\varepsilon>0$ the set $\vartheta(f,\varepsilon)$ is relatively dense in $[0,\infty),$ i.e., 
there exists $l>0$ such that any subinterval of $[0,\infty)$ of length $l$ intersects $\vartheta(f,\varepsilon)$. 

As emphasized in \cite{marko-manuel-ap}, the theory of almost periodic functions of several real variables has not attracted so much attention of the authors by now. In support of our investigation of the multi-dimensional $c$-almost periodicity, we would like to present the following illustrative examples (the notion and notation will be explained in the next section):

\begin{example}\label{nijedanije} (cf. also \cite{marko-manuel-ap})
Suppose that a closed linear operator $A$ generates a strongly continuous semigroup $(T(t))_{t\geq 0}$ on a Banach space $X$ consisting of certain complex-valued functions defined on ${\mathbb R}^{n}.$ Under some assumptions, 
the function
\begin{align*}
u(t,x)=\bigl(T(t)u_{0}\bigr)(x)+\int^{t}_{0}[T(t-s)f(s)](x)\, ds,\quad t\geq 0,\ x\in {\mathbb R}^{n}
\end{align*}
is a unique classical solution of the abstract Cauchy problem
\begin{align*}
u_{t}(t,x)=Au(t,x)+F(t,x),\ t\geq 0,\ x\in {\mathbb R}^{n}; \ u(0,x)=u_{0}(x),
\end{align*} 
where $F(t,x):=[f(t)](x),$ $t\geq 0,$ $x\in {\mathbb R}^{n}.$ For a large class of strongly continuous semigroups (for example, this holds for the Gaussian semigroup on ${\mathbb R}^{n};$ see e.g., \cite[Example 3.7.6]{a43}), there exists a kernel $(t,y)\mapsto E(t,y),$ $t> 0,$ $y\in {\mathbb R}^{n}$ which is integrable on any set $[0,T]\times {\mathbb R}^{n}$ ($T>0$) and satisfies 
$$
[T(t)f(s)](x)=\int_{{\mathbb R}^{n}}F(s,x-y)E(t,y)\, dy,\quad t>0,\ s\geq 0,\ x\in {\mathbb R}^{n}.
$$
Fix a positive real number
$t_{0}>0.$ As in the case that $c=1,$
the $c$-almost periodic behaviour of function $x\mapsto u_{t_{0}}(x)\equiv \int^{t_{0}}_{0}[T(t_{0}-s)f(s)](x)\, ds,$ $x\in {\mathbb R}^{n}$ strongly depends on the $c$-almost periodic behaviour of function $F(t,x)$ in the space variable $x.$ Suppose, for example, that the function $F(t,x)$
is Bohr $c$-almost periodic with respect to the variable $x\in {\mathbb R}^{n},$ uniformly in the variable $t$ on compact subsets of $[0,\infty)$. Then the function 
$u_{t_{0}}(\cdot)$ is also Bohr $c$-almost periodic, which follows from the estimate
\begin{align*}
\Bigl| & u_{t_{0}}(x+\tau)- cu_{t_{0}}(x) \Bigr|\leq  \int^{t_{0}}_{0} \int_{{\mathbb R}^{n}}
 | F(s,x+\tau-y)-cF(s,x-y)| \cdot \bigl|E\bigl(t_{0},y\bigr)\bigr|\, dy\, ds
\\& \leq \epsilon \int^{t_{0}}_{0} \int_{{\mathbb R}^{n}}\bigl|E\bigl(t_{0},y\bigr)\bigr|\, dy\, ds
\end{align*}
and corresponding definitions.
\end{example}

\begin{example}\label{nijedanije121}
In this example, we will observe an interesting feature of the famous d'Alambert formula, which has been used by S. Zaidman \cite[Example 5]{30} in a slightly different context (for almost periodic functions of one real variable).
Let $a>0;$ then it is well known that the regular solution of the wave equation $u_{tt}=a^{2}u_{xx}$ in domain $\{(x,t) : x\in {\mathbb R},\ t>0\},$ equipped with the initial conditions $u(x,0)=f(x)\in C^{2}({\mathbb R})$ and $u_{t}(x,0)=g(x)\in C^{1}({\mathbb R}),$ is given by the d'Alambert formula
$$
u(x,t)=\frac{1}{2}\bigl[ f(x-at) +f(x+at) \bigr]+\frac{1}{2a}\int^{x+at}_{x-at}g(s)\, ds,\quad x\in {\mathbb R}, \ t>0.
$$  
Let us suppose that the function $x\mapsto (f(x),g^{[1]}(x)),$ $x\in {\mathbb R}$
is $c$-almost periodic, where 
$
g^{[1]}(\cdot) \equiv \int^{\cdot}_{0}g(s)\, ds.$  Then the solution 
$u(x,t)$ can be extended to the whole real line in the time variable and this solution is $c$-almost periodic in $(x,t)\in {\mathbb R}^{2}.$ To verify this, fix a positive real number $\epsilon>0.$ Then there exists a finite real number $l>0$ such that any subinterval $I$ of ${\mathbb R}$ of length $l$ contains a point $\tau \in I$ such that 
\begin{align}\label{capslock}
|f(x+\tau)-cf(x)|+\Bigl|g^{[1]}(x+\tau)-cg^{[1]}(x)\Bigr|<\epsilon,\quad x\in {\mathbb R}.
\end{align}
Furthermore, we have ($x,\ t,\ \tau_{1},\ \tau_{2}\in {\mathbb R}$):
\begin{align}\label{jarakb}
\begin{split}
\Bigl|u\bigl(x&+\tau_{1},t+\tau_{2}\bigr)-cu(x,t)\Bigr|
\\& \leq\frac{1}{2}\Bigl| f\bigl( (x-at)+(\tau_{1}-a\tau_{2}) \bigr)-cf(x-at)\Bigr|
\\&+ \frac{1}{2}\Bigl| f\bigl( (x+at)+(\tau_{1}+a\tau_{2}) \bigr)-cf([x+at+(\tau_{1}+a\tau_{2}) ]-(\tau_{1}+a\tau_{2}) )\Bigr|
\\& +\frac{1}{2a}\Bigl|g^{[1]}\bigl( (x-at)+(\tau_{1}-a\tau_{2}) \bigr)-cg^{[1]}(x-at)\Bigr|
\\& +\frac{1}{2a}\Bigl| g^{[1]}\bigl( (x+at)-(\tau_{1}-a\tau_{2}) \bigr)-cg^{[1]}(x+at)\Bigr|.
\end{split}
\end{align}
Let $(t_{1},t_{2})\in {\mathbb R}^{2}.$ Then the interval $[-t_{1}-at_{2}-(l/2),-t_{1}-at_{2}+(l/2)]$ contains the point $\tau'$ and 
the interval $[t_{1}-at_{2}-(l/2),t_{1}-at_{2}+(l/2)]$ contains the point $\tau''$ such that the equation \eqref{capslock} holds with the number $\tau$ replaced therein with any of the numbers $\tau',\ \tau''.$ Setting $\tau_{1}:=(\tau''-\tau')/2$
and $\tau_{2}:=(-\tau_{1}-\tau_{2})/2a,$ it can be easily shown that $|\tau_{1}-t_{1}|\leq l/2$ and $|\tau_{2}-t_{2}|\leq l/2a$, so that the final conclusion simply follows from the corresponding definition and \eqref{jarakb}. 
\end{example}

The notion of $(\omega,c)$-periodicity and various generalizations of this concept have recently been introduced and investigated by E. Alvarez, A. G\'omez, M. Pinto \cite{alvarez1} and
E. Alvarez, S. Castillo, M. Pinto \cite{alvarez2}-\cite{alvarez3}. In \cite{c1}, our joint paper with M. T. Khalladi, A. Rahmani, M. Pinto and D. Velinov,
we have recently introduced and analyzed the classes of
$c$-almost periodic functions, $c$-uniformly recurrent functions, semi-$c$-periodic functions 
and their Stepanov generalizations, where $c\in {\mathbb C}$ and $|c|=1.$ On the other hand, in \cite{marko-manuel-ap}, we have recently analyzed various notions of multi-dimensional almost periodic type functions. The main aim of this paper is to continue the research studies \cite{marko-manuel-ap} and \cite{c1} by investigating various notions of multi-dimensional $c$-almost periodic type functions and related applications, where $c\in {\mathbb C} \setminus \{0\}.$ For simplicity, we will not consider the corresponding Stepanov classes here (see \cite{nova-man} for more details).

Before going any further, the author would like to express his sincere thanks to Prof. A. Ch\'avez, M. T. Khalladi, M. Pinto, A. Rahmani and D. Velinov for many stimulating discussions during this investigation. The research articles \cite{marko-manuel-ap} and \cite{c1}, among many others, are written in a collaboration with these mathematicians.\vspace{0.2cm}

\noindent {\bf Notation and terminology.}
We assume henceforth that $(X,\| \cdot \|)$, $(Y, \|\cdot\|_Y)$ and $(Z, \|\cdot\|_Z)$ are complex Banach spaces and $n\in {\mathbb N}$;  usually, ${\mathcal B}$ denotes the collection of all bounded subsets of $X$ or all compact subsets
of $X.$ Set ${\mathcal B}_{X}:=\{y\in X : (\exists B\in {\mathcal B})\, y\in B\}.$ 
We will always assume henceforth that ${\mathcal B}_{X}
=X,$ i.e., 
that
for each $x\in X$ there exists $B\in {\mathcal B}$ such that $x\in B.$ By
$L(X,Y)$ we denote the Banach algebra of all bounded linear operators from $X$ into
$Y;$ $L(X,X)\equiv L(X)$. By $B^{\circ}$ and $\partial B$ we denote the interior and the boundary of a subset $B$ of a topological space $X$, respectively.

The symbol $C(I: X)$ stands for the space of all $X$-valued
continuous functions defined  on the domain $I$. By $C_{b}(I: X)$ (respectively, $BUC(I: X)$) we denote the subspace of $C(I: X)$ consisting of all bounded (respectively, all bounded uniformly continuous functions). Both $C_{b}(I: X)$ and $BUC(I: X)$ are Banach spaces with the sup-norm $\| \cdot\|_{\infty}$. This also holds for the space $C_{0}(I : X)$ consisting of all continuous functions $f : I \rightarrow X$ such that $\lim_{|t|\rightarrow +\infty}f(t)=0.$ 
 If ${\bf t_{0}}\in {\mathbb R}^{n}$ and $\epsilon>0$, then we set $B({\bf t}_{0},\epsilon):=\{{\bf t } \in {\mathbb R}^{n} : |{\bf t}-{\bf t_{0}}| \leq \epsilon\},$
where $|\cdot|$ denotes the Euclidean norm in ${\mathbb R}^{n}.$ Set 
${\mathbb N}_{n}:=\{1,\cdot \cdot \cdot, n\}$ and $S_{1}:=\{ z\in {\mathbb C} \, ; \, |z|=1\}.$ If any component of the tuple ${\bf t}=(t_{1},t_{2},\cdot \cdot \cdot, t_{n}) \in {\mathbb R}^{n}$ is strictly positive, then we simply write ${\bf t}> {\bf 0}.$ \vspace{0.2cm}

Now we will briefly explain the organization and main ideas of this paper. In Subsection \ref{stavisub}, we recall the basic definitions and results about almost periodic functions in ${\mathbb R}^{n}$. If $\emptyset  \neq I \subseteq {\mathbb R}^{n},$ $I +I \subseteq I$ and $F : I \times X \rightarrow Y$ is a continuous function, then 
the notions of Bohr $({\mathcal B},c)$-almost periodicity and $({\mathcal B},c)$-uniform recurrence for $F(\cdot;\cdot)$ are introduced in Definition \ref{nafaks1234567890}.
If the region $I$ satisfies certain conditions,
$F : I \times X \rightarrow Y$ is Bohr $({\mathcal B},c)$-almost periodic and ${\mathcal B}$ is any family of compact subsets of $X,$ then some sufficient conditions ensuring that 
for each set $B\in {\mathcal B}$ we have 
that the set $\{ F({\bf t}; x) : {\bf t} \in I,\ x\in B\}$ is relatively compact in $Y$
are given in Proposition \ref{bounded-pazice} (see also Proposition \ref{superstebag}, where we analyze 
the compositions of Bohr $({\mathcal B},c)$-almost periodic/$({\mathcal B},c)$-uniformly recurrent functions with uniformly continuous functions $\phi : Y \rightarrow Z$). 

The notion introduced in Definition \ref{nafaks1234567890} is reexamined and extended in Definition \ref{nafaks123456789012345}, where we introduce the notions of 
Bohr $({\mathcal B},I',c)$-almost periodicity and $({\mathcal B},I',c)$-uniform recurrence ($\emptyset  \neq I ' \subseteq I\subseteq {\mathbb R}^{n}$). Example \ref{dva naiks}, although very simple and elaborate, shows that 
the statement of \cite[Proposition 2.6]{c1} fails to be true for multi-dimensional $({\mathcal B},I',c)$-uniformly recurrent functions, in general. An important 
extension of \cite[Proposition 2.17]{c1} is proved in 
Proposition \ref{mentalnorazgib}, where condition 
$I +I' = I$ is crucial for proving the fact that we always have $c=\pm 1$ provided the existence of a $({\mathcal B},I',c)$-uniformly recurrent non-zero function $F : I \rightarrow {\mathbb R}$   (if $F({\bf t})\geq 0$ for all ${\bf t} \in I,$ then $c=1$); see also Example \ref{mammaru}. Proposition \ref{mentalnorazgib} is later employed in the proof of Proposition \ref{prcko-instrukt}, where it is shown that, if the function $F : I \times X \rightarrow Y$ is Bohr $({\mathcal B},I',c)$-almost periodic ($({\mathcal B},I',c)$-uniformly recurrent), $I+I'=I$ and $F(\cdot; \cdot)\neq 0,$ then $|c|=1.$ 

The first example of a multi-dimensional almost anti-recurrent function $F :{\mathbb R}^{n}\rightarrow {\mathbb R}$ ($c=-1$) which is not almost periodic is presented in Example \ref{rajkomilice}(iii)-(b).
After that, in Proposition \ref{jugosi}, we transfer the statement of
\cite[Proposition 2.9]{c1} for multi-dimensional Bohr $({\mathcal B}, c)$-almost periodic type functions
(see also Corollary \ref{jugosi1} and 
Proposition \ref{voliosam}
for similar results). The convolution invariance of Bohr $({\mathcal B},c)$-almost periodic type functions, invariance of Bohr $c$-almost periodicity and 
composition theorem for Bohr $({\mathcal B},c)$-almost periodic type functions
are investigated in Proposition \ref{convdiaggacece}, Proposition \ref{krucijaceq} and Theorem \ref{episkop-jovan}, respectively.
The main structural profilations of ${\mathbb D}$-asymptotically $c$-almost periodic type functions are given in Subsection \ref{gade-negadece}. In this subsection, we state and prove our main results, Theorem \ref{bounded-paziemceq} (in which we analyze certain relations between the classes of $I$-asymptotically Bohr $c$-almost periodic functions of type $1$ and $I$-asymptotically Bohr $c$-almost periodic functions)
and Theorem \ref{lenny-jassonce} (in which we analyze the extensions of Bohr $(I',c)$-almost periodic functions and $(I',c)$-uniformly recurrent functions). The final section of paper is reserved for applications of our abstract theoretical results. 

\subsection{Almost periodic functions on ${\mathbb R}^{n}$}\label{stavisub}

Suppose that $F : {\mathbb R}^{n} \rightarrow X$ is a continuous function. Let us recall that $F(\cdot)$ is said to be almost periodic if and only if for each $\epsilon>0$
there exists $l>0$ such that for each ${\bf t}_{0} \in {\mathbb R}^{n}$ there exists ${\bf \tau} \in B({\bf t}_{0},l)$ such that
\begin{align*}
\bigl\|F({\bf t}+{\bf \tau})-F({\bf t})\bigr\| \leq \epsilon,\quad {\bf t}\in {\mathbb R}^{n}.
\end{align*}
This is equivalent to saying that for any sequence $({\bf b}_n)$ in ${\mathbb R}^{n}$ there exists a subsequence $({\bf a}_{n})$ of $({\bf b}_n)$
such that $(F(\cdot+{\bf a}_{n}))$ converges in $C_{b}({\mathbb R}^{n}: X).$ 
The vector space of all almost periodic functions $F : {\mathbb R}^{n} \rightarrow X$ is denoted by $AP({\mathbb R}^{n} : X).$ Any almost periodic function $F : {\mathbb R}^{n} \rightarrow X$ is bounded and $AP({\mathbb R}^{n} : X)$ is the Banach space equipped with the sup-norm.

Any trigonometric polynomial in ${\mathbb R}^{n}$ is almost periodic and a continuous function $F(\cdot)$ is almost periodic if and only if there exists a sequence of trigonometric polynomials in ${\mathbb R}^{n}$ which converges uniformly to $F(\cdot);$
let us recall that a trigonometric polynomial in ${\mathbb R}^{n}$ is any linear combination of functions like ${\bf t}\mapsto e^{i\langle
 {\bf \lambda}, {\bf t} \rangle},$ ${\bf t}\in{\mathbb R}^{n},$ where ${\bf \lambda}\in {\mathbb R}^{n}$ and $\langle \cdot, \cdot \rangle$ denotes the usual inner product in ${\mathbb R}^{n}.$ 
Any almost periodic function $F : {\mathbb R}^{n} \rightarrow X$ is also uniformly continuous, the mean value
$$
M(F):=\lim_{T\rightarrow +\infty}\frac{1}{(2T)^{n}}\int_{{\bf s}+K_{T}}F({\bf t})\, d{\bf t}
$$
exists and it does not depend on $s\in {\mathbb R}^{n};$ here,  
$K_{T}:=\{ {\bf t}=(t_{1},t_{2},\cdot \cdot \cdot,t_{n}) \in {\mathbb R}^{n} :  |t_{i}|\leq T\mbox{ for }1\leq i\leq n\}.$ We define the Bohr-Fourier coefficient $F_{\lambda}\in X$ by \index{Bohr-Fourier coefficient}
$$
F_{\lambda}:=M\Bigl(e^{-i\langle \lambda, {\bf \cdot}\rangle }F(\cdot)\Bigr),\quad \lambda \in {\mathbb R}^{n},
$$
and the Bohr spectrum $
\sigma(F)$ of $F$ by\index{Bohr spectrum}
$$
\sigma(F):=\bigl\{ \lambda \in {\mathbb R}^{n} : F_{\lambda}\neq 0 \bigr\}.
$$
It is well known that $
\sigma(F)$
is at most a countable set. By $AP_{\Lambda}({\mathbb R}^{n} : X)$  
we denote the set consisting of all almost periodic functions $F : {\mathbb R}^{n} \rightarrow X$ 
such that $\sigma(F) \subseteq \Lambda.$ As is well known, for every almost periodic function $F \in AP_{\Lambda}({\mathbb R}^{n} : X),$
we can always find a sequence $(P_{k})$ of trigonometric polynomials in ${\mathbb R}^{n}$ which uniformly converges to $F(\cdot)$ on ${\mathbb R}^{n}$
and satisfies that $\sigma(P_{k}) \subseteq \Lambda$ for all $k\in {\mathbb N};$ see e.g., \cite[Chapter 1, Section 2.3]{pankov}.

\section{Bohr $({\mathcal B},c)$-almost periodic type functions}\label{maremareceacea}

The main aim of this section is to analyze Bohr $({\mathcal B},c)$-almost periodic type functions depending of several real variables, where
${\mathcal B}$ denotes a non-empty collection of non-empty subsets of $X$ and $c\in {\mathbb C} \setminus \{0\}.$ 
We will consider the following notion, which can be also analyzed on general topological (semi-)groups; see the references quoted in \cite{marko-manuel-ap} for more details concerning this problematic: 

\begin{defn}\label{nafaks1234567890}
Suppose that $\emptyset  \neq I \subseteq {\mathbb R}^{n},$ $F : I \times X \rightarrow Y$ is a continuous function and $I +I \subseteq I.$ Then we say that:
\begin{itemize}
\item[(i)]\index{function!Bohr $({\mathcal B},c)$-almost periodic}
$F(\cdot;\cdot)$ is Bohr $({\mathcal B},c)$-almost periodic if and only if for every $B\in {\mathcal B}$ and $\epsilon>0$
there exists $l>0$ such that for each ${\bf t}_{0} \in I$ there exists ${\bf \tau} \in B({\bf t}_{0},l) \cap I$ such that
\begin{align*}
\bigl\|F({\bf t}+{\bf \tau};x)-cF({\bf t};x)\bigr\|_{Y} \leq \epsilon,\quad {\bf t}\in I,\ x\in B.
\end{align*}
\item[(ii)] \index{function!$({\mathcal B},c)$-uniformly recurrent}
$F(\cdot;\cdot)$ is $({\mathcal B},c)$-uniformly recurrent if and only if for every $B\in {\mathcal B}$ 
there exists a sequence $({\bf \tau}_{k})$ in $I$ such that $\lim_{k\rightarrow +\infty} |{\bf \tau}_{k}|=+\infty$ and
\begin{align*}
\lim_{k\rightarrow +\infty}\sup_{{\bf t}\in I;x\in B} \bigl\|F({\bf t}+{\bf \tau}_{k};x)-cF({\bf t};x)\bigr\|_{Y} =0.
\end{align*}
\end{itemize}
If $X\in {\mathcal B},$ then it is also said that $F(\cdot;\cdot)$ is Bohr $c$-almost periodic ($c$-uniformly recurrent); if $c=1,$ then we also say that $F(\cdot;\cdot)$ is Bohr ${\mathcal B}$-almost periodic (${\mathcal B}$-uniformly recurrent) [Bohr almost periodic (uniformly recurrent)].
\end{defn}

Unless stated otherwise, we will assume that $\emptyset  \neq I \subseteq {\mathbb R}^{n}$ henceforth.
It is clear that any Bohr ($({\mathcal B},c)$-)almost periodic function is ($({\mathcal B},c)$-)uniformly recurrent; in general, the converse statement does not hold (\cite{nova-man}). In \cite[Proposition 2.2]{c1}, we have proved that any Bohr almost periodic function $f: I \rightarrow Y$ is bounded, provided that $I=[0,\infty)$ or $I={\mathbb R}.$ In the multi-dimensional case, the things become more complicated and the best we can do is to prove the following extension of the above-mentioned result following the method proposed in the proof of \cite[Proposition 2.16]{marko-manuel-ap}, which is applicable in the case that $I=[0,\infty)^{n}$ or $I={\mathbb R}^{n}:$

\begin{prop}\label{bounded-pazice}
Suppose that $\emptyset  \neq I \subseteq {\mathbb R}^{n},$ $I +I \subseteq I,$ $I$ is closed,
$F : I \times X \rightarrow Y$ is Bohr $({\mathcal B},c)$-almost periodic and ${\mathcal B}$ is any family of compact subsets of $X.$ If
\begin{align*}
\notag
(\forall l>0) \, (\exists {\bf t_{0}}\in I)\, (\exists k>0) &\, (\forall {\bf t} \in I)(\exists {\bf t_{0}'}\in I)\,
\\& (\forall {\bf t_{0}''}\in B({\bf t_{0}'},l) \cap I)\, {\bf t}- {\bf t_{0}''} \in B({\bf t_{0}},kl) \cap I,
\end{align*}
then for each $B\in {\mathcal B}$ we have 
that the set $\{ F({\bf t}; x) : {\bf t} \in I,\ x\in B\}$ is relatively compact in $Y;$
in particular,
$\sup_{{\bf t}\in I;x\in B}\|F({\bf t}; x)\|_{Y}<\infty.$
\end{prop}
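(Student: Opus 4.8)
The plan is to fix $B \in {\mathcal B}$ and a sequence $({\bf t}_k)$ in $I$ together with vectors $x_k \in B$, and to extract from $(F({\bf t}_k; x_k))_k$ a convergent subsequence in $Y$; this gives relative compactness, and boundedness is then immediate. First I would apply the hypothesis with $\epsilon := 1$ and the corresponding $l>0$, then feed this $l$ into the long quantifier chain to obtain a fixed point ${\bf t_0}\in I$ and a constant $k>0$ with the stated covering property. The idea is that every ${\bf t}\in I$ can be written as ${\bf t} = ({\bf t}-{\bf t_0''}) + {\bf t_0''}$, where ${\bf t_0''}$ is a Bohr $\epsilon$-shift (lying in $B({\bf t_0'},l)\cap I$ for the ${\bf t_0'}$ produced by the chain) and the remainder ${\bf t}-{\bf t_0''}$ is trapped in the \emph{fixed} compact set $K := B({\bf t_0},kl)\cap I$, independent of ${\bf t}$.

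Next, using that ${\bf t_0''}$ is such a shift, I would estimate
\[
\bigl\| F({\bf t}; x) - c\,F({\bf t}-{\bf t_0''}; x) \bigr\|_Y
= \bigl\| F\bigl(({\bf t}-{\bf t_0''})+{\bf t_0''}; x\bigr) - c\,F({\bf t}-{\bf t_0''}; x) \bigr\|_Y \le \epsilon = 1
\]
for all $x\in B$, since ${\bf t}-{\bf t_0''}\in I$ and ${\bf t_0''}$ was chosen (via the second part of Definition \ref{nafaks1234567890}(i), instantiated at the point ${\bf t_0'}$) to satisfy the defining inequality. Hence $\|F({\bf t};x)\|_Y \le |c|\cdot \sup_{{\bf s}\in K, y\in B}\|F({\bf s};y)\|_Y + 1$. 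Now $K$ is a compact subset of ${\mathbb R}^n$ (closed and bounded, being the intersection of the closed set $I$ with a closed ball) and $B$ is compact by assumption, so $K\times B$ is compact; since $F$ is continuous, $F(K\times B)$ is compact in $Y$, in particular bounded. This already yields $\sup_{{\bf t}\in I, x\in B}\|F({\bf t};x)\|_Y<\infty$.

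To upgrade boundedness to relative compactness, return to the sequence $(F({\bf t}_k;x_k))_k$. For each $k$ write ${\bf t}_k = {\bf s}_k + {\bm\tau}_k$ with ${\bf s}_k\in K$ and ${\bm\tau}_k$ a Bohr $1$-shift depending on ${\bf t}_k$; more usefully, for every $m\in{\mathbb N}$ rerun the argument with $\epsilon := 1/m$ to get a corresponding compact set $K_m$ and write ${\bf t}_k = {\bf s}_k^{(m)} + {\bm\tau}_k^{(m)}$ with ${\bf s}_k^{(m)}\in K_m$. By compactness of $K_m\times B$, pass (diagonally over $m$) to a subsequence along which ${\bf s}_k^{(m)}\to {\bf s}^{(m)}_\infty$ and $x_k\to x_\infty$ for each $m$, so $F({\bf s}_k^{(m)};x_k)\to F({\bf s}^{(m)}_\infty;x_\infty)=:y_m$; then $\|F({\bf t}_k;x_k) - c\,F({\bf s}_k^{(m)};x_k)\|_Y \le 1/m$ shows $(F({\bf t}_k;x_k))_k$ is, along this subsequence, within $1/m + o(1)$ of the single vector $c\,y_m$, hence is Cauchy in $Y$ and converges. (Alternatively, and more cleanly: the above bound shows $F(I\times B)$ lies within Hausdorff distance $1/m$ of the compact set $c\cdot F(K_m\times B)$ for every $m$, i.e. $F(I\times B)$ is totally bounded, hence relatively compact since $Y$ is complete.)

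The main obstacle is purely bookkeeping in the covering hypothesis: one must be careful that the point ${\bf t_0}$ and constant $k$ are extracted \emph{before} quantifying over ${\bf t}\in I$ (so that $K=K_m$ is genuinely independent of ${\bf t}$), while the shift ${\bf t_0''}$ is allowed to depend on ${\bf t}$; getting the order of quantifiers straight is the only delicate point. The rest — continuity of $F$ on the compact set $K_m\times B$, completeness of $Y$, and the total-boundedness characterization of relative compactness — is standard.
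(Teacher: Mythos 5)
Your proof is correct and is exactly the intended argument: the paper omits the details, referring to the method of \cite[Proposition 2.16]{marko-manuel-ap}, which is precisely this decomposition of each ${\bf t}\in I$ as (point of the fixed compact set $B({\bf t_0},kl)\cap I$) plus (an $\epsilon$-period supplied by Bohr $({\mathcal B},c)$-almost periodicity at the point ${\bf t_0'}$), followed by continuity of $F$ on a compact product and a total-boundedness argument. Your handling of the quantifier order and the final Hausdorff-distance/total-boundedness step are both sound, so nothing further is needed.
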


We continue by providing the following illustrative example:

\begin{example}\label{pajos-langos} (see also \cite[Example 2.15]{c1})
Suppose that $\varphi \in (-\pi,\pi] \setminus \{0\},$ $\theta \in (-\pi,\pi],$ $\mu \in {\mathbb R}^{n} \setminus \{0\}$  and $c=e^{i\theta}.$ Then the trigonometric polynomial
${\bf t} \rightarrow e^{i\langle \mu, {\bf t}\rangle},$ ${\bf t}\in {\mathbb R}^{n}$ is $c$-almost periodic. Towards see this, set $S:=\{j\in {\mathbb N}_{n} : \mu_{j}\neq 0\}$ and $l:=\max\{2\pi |\mu_{j}|^{-1} : j\in S\}.$ Let $\epsilon>0$ be fixed. Then we have (${\bf t} \in {\mathbb R}^{n};$ $\tau \in {\mathbb R}^{n}$):
\begin{align*}
\Bigl|  & e^{i\langle \mu, {\bf t}+\tau\rangle}-e^{i\theta} e^{i\langle \mu, {\bf t}\rangle}\Bigr|
\\& =\Bigl| e^{i[\mu_{1}\tau_{1}+\mu_{2}\tau_{2}+\cdot \cdot \cdot +\mu_{n}\tau_{n}-\theta]}-1 \Bigr| =2\Biggl|\sin \Bigl( \frac{\mu_{1}\tau_{1}+\mu_{2}\tau_{2}+\cdot \cdot \cdot +\mu_{n}\tau_{n}-\theta}{2}\Bigr)\Biggr|,
\end{align*}
and therefore
\begin{align*}
\Bigl|  &e^{i\langle \mu, {\bf t}+\tau\rangle}-e^{i\theta} e^{i\langle \mu, {\bf t}\rangle}\Bigr|\leq \epsilon,\ {\bf t} \in {\mathbb R}^{n} \mbox{ if and only if there exists }k\in {\mathbb Z}\mbox{ such that }
\\& \mu_{1}\tau_{1}+\mu_{2}\tau_{2}+\cdot \cdot \cdot +\mu_{n}\tau_{n}-\theta \in \Bigl[ -\arcsin (\epsilon/2) +k\pi, \arcsin (\epsilon/2) +k\pi\Bigr].
\end{align*}
In particular, if there exists $k\in {\mathbb Z}$ such that $\mu_{1}\tau_{1}+\mu_{2}\tau_{2}+\cdot \cdot \cdot +\mu_{n}\tau_{n} =k\pi +\theta, $ then we have $| e^{i\langle \mu, {\bf t}+\tau\rangle}-e^{i\theta} e^{i\langle \mu, {\bf t}\rangle}|\leq \epsilon,$ $\ {\bf t} \in {\mathbb R}^{n}$. But, we can simply prove that for each ${\bf t}_{0} \in {\mathbb R}^{n}$ there exists a point $\tau \in B({\bf t}_{0},l)$ such that $\mu_{1}\tau_{1}+\mu_{2}\tau_{2}+\cdot \cdot \cdot +\mu_{n}\tau_{n} =k\pi +\theta$ for some $k\in {\mathbb Z},$ which simply implies the required.
\end{example}

Using a slight modification of the proof of \cite[Property 4, p. 3]{18}, we may conclude the following:

\begin{prop}\label{superstebag}
Suppose that $F : I \times X \rightarrow Y$ is Bohr $({\mathcal B},c)$-almost periodic/$({\mathcal B},c)$-uniformly recurrent, and $\phi : Y \rightarrow Z$ is uniformly continuous on $\overline{R(F)}$ and satisfies that $\phi (cy)=c\phi(y)$ for all $y\in Y.$
Then $\phi \circ F : I \times X \rightarrow Z$ is Bohr $({\mathcal B},c)$-almost periodic/$({\mathcal B},c)$-uniformly recurrent.
\end{prop}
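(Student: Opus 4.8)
The plan is to handle the two cases (Bohr $({\mathcal B},c)$-almost periodicity and $({\mathcal B},c)$-uniform recurrence) in parallel, since the only essential mechanism used is that $\phi$ is uniformly continuous on $\overline{R(F)}$ and intertwines multiplication by $c$. First I would fix $B\in {\mathcal B}$ and $\epsilon>0$. Since $\phi$ is uniformly continuous on $\overline{R(F)}$, there exists $\delta>0$ such that $\|\phi(y_{1})-\phi(y_{2})\|_{Z}\leq \epsilon$ whenever $y_{1},y_{2}\in \overline{R(F)}$ and $\|y_{1}-y_{2}\|_{Y}\leq \delta$. Apply the hypothesis on $F$ to this $B$ and to $\delta$ in place of $\epsilon$: in the almost periodic case we get $l>0$ so that for each ${\bf t}_{0}\in I$ there is ${\bf \tau}\in B({\bf t}_{0},l)\cap I$ with $\|F({\bf t}+{\bf \tau};x)-cF({\bf t};x)\|_{Y}\leq \delta$ for all ${\bf t}\in I$, $x\in B$; in the uniformly recurrent case we get a sequence $({\bf \tau}_{k})$ in $I$ with $|{\bf \tau}_{k}|\to\infty$ and $\sup_{{\bf t}\in I,\,x\in B}\|F({\bf t}+{\bf \tau}_{k};x)-cF({\bf t};x)\|_{Y}\to 0$, so in particular this supremum is $\leq \delta$ for all large $k$.

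The key step is the following chain: for the relevant translation parameter ${\bf \tau}$ (resp.\ ${\bf \tau}_{k}$), and for any ${\bf t}\in I$, $x\in B$, write
\begin{align*}
\bigl\|(\phi\circ F)({\bf t}+{\bf \tau};x)-c(\phi\circ F)({\bf t};x)\bigr\|_{Z}
&=\bigl\|\phi\bigl(F({\bf t}+{\bf \tau};x)\bigr)-c\,\phi\bigl(F({\bf t};x)\bigr)\bigr\|_{Z}\\
&=\bigl\|\phi\bigl(F({\bf t}+{\bf \tau};x)\bigr)-\phi\bigl(cF({\bf t};x)\bigr)\bigr\|_{Z},
\end{align*}
where the last equality uses $\phi(cy)=c\phi(y)$. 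Now both arguments $F({\bf t}+{\bf \tau};x)$ and $cF({\bf t};x)$ must lie in $\overline{R(F)}$ for the uniform continuity estimate to apply: the first is clearly in $R(F)\subseteq \overline{R(F)}$, and I would observe that $cF({\bf t};x)=F({\bf t}+{\bf \tau};x)-\bigl(F({\bf t}+{\bf \tau};x)-cF({\bf t};x)\bigr)$, so it is within distance $\delta$ of a point of $R(F)$; more directly, since $\phi$ is (by hypothesis) uniformly continuous on $\overline{R(F)}$, we may first extend it continuously to $\overline{R(F)}$ and note the identity $\phi(cy)=c\phi(y)$ persists on the closure by continuity, so $cF({\bf t};x)\in c\,\overline{R(F)}$ and the estimate $\|F({\bf t}+{\bf \tau};x)-cF({\bf t};x)\|_{Y}\leq \delta$ together with uniform continuity of the extension on $\overline{R(F)}\cup c\,\overline{R(F)}$ (or just the observation that both points are $\delta$-close to $R(F)$) yields
\begin{align*}
\bigl\|\phi\bigl(F({\bf t}+{\bf \tau};x)\bigr)-\phi\bigl(cF({\bf t};x)\bigr)\bigr\|_{Z}\leq \epsilon.
\end{align*}
Hence $\|(\phi\circ F)({\bf t}+{\bf \tau};x)-c(\phi\circ F)({\bf t};x)\|_{Z}\leq \epsilon$ uniformly in ${\bf t}\in I$, $x\in B$, which is exactly the defining estimate for $\phi\circ F$ (with the same $l$, resp.\ the same sequence $({\bf \tau}_{k})$ after passing to large $k$, noting $|{\bf \tau}_{k}|\to\infty$ is inherited verbatim). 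Continuity of $\phi\circ F$ on $I\times X$ is immediate since $F$ is continuous and $\phi$ is continuous on $R(F)$.

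The only genuinely delicate point — the main obstacle — is the bookkeeping around $\overline{R(F)}$: we must make sure every argument fed to $\phi$ at which we invoke uniform continuity actually belongs to the set on which $\phi$ is assumed uniformly continuous. The clean way around this is to note that $\{cy:y\in\overline{R(F)}\}$ together with $\overline{R(F)}$ need not coincide, but the two points $F({\bf t}+{\bf \tau};x)$ and $cF({\bf t};x)$ that we compare are always within $\delta$ of each other and the first lies in $R(F)$; choosing the modulus of continuity relative to the (closed, hence complete) set $\overline{R(F)}$ and using that a uniformly continuous map on a subset extends uniquely and uniformly continuously to its closure, the estimate goes through. This is precisely the ``slight modification of the proof of \cite[Property 4, p.~3]{18}'' alluded to before the statement, and no further ideas are needed.
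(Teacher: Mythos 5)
Your overall strategy is the standard one and matches what the paper intends (push the $\delta$ from the modulus of uniform continuity of $\phi$ back into the almost-periodicity/recurrence estimate for $F$, and use $\phi(cy)=c\phi(y)$ to rewrite $c\,\phi(F({\bf t};x))$ as $\phi(cF({\bf t};x))$). However, the one step you yourself flag as delicate is not actually closed by any of the justifications you offer. Uniform continuity of $\phi$ on $\overline{R(F)}$ only controls pairs of points that \emph{both} lie in $\overline{R(F)}$; it says nothing about a point that is merely $\delta$-close to $R(F)$, so the remark that $cF({\bf t};x)$ is within $\delta$ of $F({\bf t}+{\bf \tau};x)\in R(F)$ does not license the estimate. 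Likewise, ``extending $\phi$ to the closure'' is vacuous here ($\phi$ is already defined on all of $Y$ and assumed uniformly continuous on $\overline{R(F)}$), and uniform continuity on $\overline{R(F)}$ and on $c\,\overline{R(F)}$ separately does not yield uniform continuity on their union, which is what you would need in order to compare $F({\bf t}+{\bf \tau};x)$ with $cF({\bf t};x)$ across the two sets.

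The missing observation, which does close the gap, is that the hypothesis on $F$ already forces $c\,R(F)\subseteq \overline{R(F)}$. Indeed, fix $({\bf t},x)$; since ${\mathcal B}_{X}=X$ there is $B\in{\mathcal B}$ with $x\in B$, and applying the definition of Bohr $({\mathcal B},c)$-almost periodicity (resp.\ $({\mathcal B},c)$-uniform recurrence) with $\epsilon=1/k$ produces ${\bf \tau}_{k}\in I$ such that $\bigl\|F({\bf t}+{\bf \tau}_{k};x)-cF({\bf t};x)\bigr\|_{Y}\leq 1/k$; since $I+I\subseteq I$, each $F({\bf t}+{\bf \tau}_{k};x)$ belongs to $R(F)$, whence $cF({\bf t};x)=\lim_{k\rightarrow+\infty}F({\bf t}+{\bf \tau}_{k};x)\in\overline{R(F)}$. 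With this in hand, both arguments of $\phi$ in your key display lie in $\overline{R(F)}$ and the modulus of continuity applies verbatim; the rest of your argument (the parallel treatment of the uniformly recurrent case and the continuity of $\phi\circ F$) is fine.
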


The conclusions clarified in the next illustrative example follow from the arguments similar to those employed in \cite[Example 2.13]{marko-manuel-ap}:

\begin{example}\label{pwerqwer}
\begin{itemize}
\item[(i)] Suppose that $F_{j} : X \rightarrow Y$ is a continuous function, for each $B\in {\mathcal B}$ we have $\sup_{x\in B}\| F_{j}(x) \|_{Y}<\infty$ 
and the complex-valued mapping $t\mapsto (\int_{0}^{t}f_{1}(s)\, ds,\cdot \cdot \cdot, \int_{0}^{t}f_{n}(s)\, ds),$  $t\geq 0$ is $c$-almost periodic ($1\leq j \leq n$). Set
\begin{align*}
F\bigl(t_{1},\cdot \cdot \cdot,t_{n+1}; x\bigr):=\sum_{j=1}^{n}\int_{t_{j}}^{t_{j+1}}f_{j}(s)\, ds \cdot F_{j}(x)\ \mbox{ for all }x\in X \mbox{ and } t_{j}\geq 0,\ 1\leq j\leq n.
\end{align*}
Then the mapping $F: [0,\infty)^{n+1} \times X \rightarrow X$ is Bohr $({\mathcal B},c)$-almost periodic. 
\item[(ii)] Suppose that $F : X \rightarrow Y$ is a continuous function, for each $B\in {\mathcal B}$ we have $\sup_{x\in B}\| F(x) \|_{Y}<\infty$ and the complex-valued mapping $t\mapsto f_{j}(t),$ $t\geq 0$ is $c$-almost periodic, resp. bounded and $c$-uniformly recurrent ($1\leq j \leq n$). Set
\begin{align*}
F\bigl(t_{1},\cdot \cdot \cdot,t_{n}; x\bigr):=\prod_{j=1}^{n}f_{j}\bigl(t_{j}\bigr)\cdot F(x)\ \mbox{ for all }x\in X \mbox{ and } t_{j}\geq 0,\ 1\leq j\leq n.
\end{align*}
Then the mapping $F: [0,\infty)^{n} \times X \rightarrow X$ is Bohr $({\mathcal B},c)$-almost periodic, resp. $({\mathcal B},c)$-uniformly recurrent.
\item[(iii)] Suppose that $G : [0,\infty)^{n} \rightarrow {\mathbb C}$ is $c$-almost periodic, resp. bounded and $c$-uniformly recurrent, $F: [0,\infty) \times X \rightarrow Y$ is Bohr ${\mathcal B}$-almost periodic, resp. ${\mathcal B}$-uniformly recurrent, and for each set $B\in {\mathcal B}$ we have $\sup_{t\geq 0; x\in B}\| F(t;x) \|_{Y}<\infty.$ Set
\begin{align*}
F\bigl(t_{1},\cdot \cdot \cdot,t_{n+1}; x\bigr)&:=G\bigl(t_{1},\cdot \cdot \cdot, t_{n}\bigr)\cdot F\bigl(t_{n+1};x\bigr)\\& \mbox{ for all }x\in X \mbox{ and } t_{j}\geq 0,\ 1\leq j\leq n+1.
\end{align*}
Then the mapping $F: [0,\infty)^{n+1} \times X \rightarrow Y$ is Bohr $({\mathcal B},c)$-almost periodic, resp. $({\mathcal B},c)$-uniformly recurrent (see also \cite[Proposition 2.19, Example 2.22]{marko-manuel-ap}, which can be modified in a similar fashion).
\end{itemize}
\end{example}

The notion introduced in Definition \ref{nafaks1234567890} can be extended as follows:

\begin{defn}\label{nafaks123456789012345}
Suppose that $\emptyset  \neq I'\subseteq I \subseteq {\mathbb R}^{n},$ $F : I \times X \rightarrow Y$ is a continuous function and $I +I' \subseteq I.$ Then we say that:
\begin{itemize}
\item[(i)]\index{function!Bohr $({\mathcal B},I',c)$-almost periodic}
$F(\cdot;\cdot)$ is Bohr $({\mathcal B},I',c)$-almost periodic if and only if for every $B\in {\mathcal B}$ and $\epsilon>0$
there exists $l>0$ such that for each ${\bf t}_{0} \in I'$ there exists ${\bf \tau} \in B({\bf t}_{0},l) \cap I'$ such that
\begin{align}\label{ujshe}
\bigl\|F({\bf t}+{\bf \tau};x)-c F({\bf t};x)\bigr\|_{Y} \leq \epsilon,\quad {\bf t}\in I,\ x\in B.
\end{align}
\item[(ii)] \index{function!$({\mathcal B},I',c)$-uniformly recurrent}
$F(\cdot;\cdot)$ is $({\mathcal B},I',c)$-uniformly recurrent if and only if for every $B\in {\mathcal B}$ 
there exists a sequence $({\bf \tau}_{k})$ in $I'$ such that $\lim_{k\rightarrow +\infty} |{\bf \tau}_{k}|=+\infty$ and
\begin{align}\label{ujshe-rec}
\lim_{k\rightarrow +\infty}\sup_{{\bf t}\in I;x\in B} \bigl\|F({\bf t}+{\bf \tau}_{k};x)-cF({\bf t};x)\bigr\|_{Y} =0.
\end{align}
\end{itemize}
If $X\in {\mathcal B},$ then it is also said that $F(\cdot;\cdot)$ is Bohr $(I',c)$-almost periodic ($(I',c)$-uniformly recurrent).
\end{defn}

\begin{rem}\label{era-cea}
\begin{itemize}
\item[(i)] Let $|c|=1$ and $F : {\mathbb R} \rightarrow Y$ be a continuous function. Then
$F(\cdot)$ is $c$-almost periodic ($c$-uniformly recurrent) in the sense of \cite{c1} if and only if $F(\cdot)$ is
Bohr $((0,\infty),c)$-almost periodic ($((0,\infty),c)$-uniformly recurrent) in the sense of Definition \ref{nafaks123456789012345}. Albeit we will not consider here the general question concerning the existence of larger sets $I''\supseteq I'$ for which a given a Bohr $({\mathcal B},I',c)$-almost periodic function $F(\cdot;\cdot)$ is also $({\mathcal B},I'',c)$-almost periodic (the only exception is the proof of Theorem \ref{lenny-jassonce}), we would like to note that any Bohr $((0,\infty),c)$-almost periodic function is already Bohr $({\mathbb R},c)$-almost periodic.
This is clear if $\arg(c) /\pi \notin {\mathbb Q}$ since we can apply then \cite[Proposition 2.11(i)]{c1} in order to see that the function $F(\cdot)$ is also Bohr $((0,\infty),c^{-1})$-almost periodic and therefore, given $\epsilon>0$ in advance, we can collect all positive $(\epsilon,c)$-periods of function $F(\cdot)$ and all negative values of all positive  $(\epsilon,c^{-1})$-periods of function $F(\cdot)$ (with the meaning clear), obtaining thus a relatively dense set in ${\mathbb R}$ consisting solely of $(\epsilon,c)$-periods of $F(\cdot).$ The situation is similar if
$\arg(c) /\pi \in {\mathbb Q}$ because then there exists $m\in {\mathbb N}$ such that $c^{m+1}=1$ so that $c^{m}=c^{-1}$ and we can collect 
 all positive $(\epsilon,c)$-periods of function $F(\cdot)$ and all
negatives of all positive $(\epsilon/m,c)$-periods of function $F(\cdot)$ in order to obtain a relatively dense set in ${\mathbb R}$ consisting solely of $(\epsilon,c)$-periods of $F(\cdot);$ observe here only that the assumption $\|F(t+\tau)-cF(t)\|\leq \epsilon$ for all $t\in {\mathbb R}$ and some $\tau \in {\mathbb R}$ implies 
\begin{align}
&\label{valevo} \bigl\|  F(t+m\tau)-c^{m}F(t)\bigr\| 
\\\notag & \leq \bigl\| F(t+m\tau)-cF(t+(m-1)\tau)\bigr\| +|c|\bigl\| F(t+(m-1)\tau)-cF(t+(m-2)\tau)\bigr\| 
\\\notag &+\cdot \cdot \cdot +|c|^{m-2}\bigl\| F(t+2\tau)-cF(t+\tau)\bigr\| +|c|^{m-1}\bigl\| F(t+\tau)-cF(t)\bigr\| 
  \leq m\epsilon,\ t\in {\mathbb R}. 
\end{align}
\item[(ii)] Condition $\emptyset  \neq I'\subseteq I $ is a bit unnecessary and intended for considerations of regions $I$ for which $0\in I;$ more precisely, the assumption $I+I'\subseteq I$ is mandatory and implies that for each ${\bf t}_{0}\in I$ we have $I' \subseteq I-{\bf t}_{0}$ (take, for example $I=[1,\infty)$ and $I'=[0,\infty);$ 
 then we do not have $I'\subseteq I$ but the notion introduced in Definition \ref{nafaks123456789012345} is meaningful).
\item[(iii)] The main structural properties of functions introduced in Definition \ref{nafaks1234567890} and Definition \ref{nafaks123456789012345}, clarified in \cite[Proposition 2.28]{c1} and \cite[Theorem 2.13]{c1}, continue to hold with appropriate modifications. For example, the introduced spaces of functions are translation invariant, in a certain sense, with respect to the both variables.
\end{itemize}
\end{rem}

Clearly, 
the notion from Definition \ref{nafaks1234567890} is recovered by plugging $I'=I$ and any 
$({\mathcal B},I',c)$-uniformly recurrent function is $({\mathcal B},I,c)$-uniformly recurrent
provided that $I+I\subseteq I.$ 
Concerning the statement of \cite[Proposition 2.6]{c1}, we would like to present first the following instructive example:  

\begin{example}\label{dva naiks}
Suppose that $I:=\{ (x,y)\in {\mathbb R}^{2} : x+y\geq 0\}$ ($I:=\{ (x,y)\in {\mathbb R}^{2} : x+y\geq 0\}$) and $I':=\{ (x,y)\in {\mathbb R}^{2} : x+y=1\}$ ($I':=\{ (x,y)\in {\mathbb R}^{2} : x+y=-1\}$). Set $F(x,y):=2^{
-x-y} ,$ $(x,y)\in I.$ Then $I+I'\subseteq I+I =I$ and for every $(a,b)\in I'$ we have $F((x,y)+(a,b))=2^{-1}F(x,y),$ $(x,y)\in I$ ($F((x,y)+(a,b))=2F(x,y),$ $(x,y)\in I$), so that $F(\cdot, \cdot)$ is both Bohr $(I',2^{-1})$-almost periodic and $2^{-1}$-uniformly recurrent (Bohr $(I',2)$-almost periodic and $2$-uniformly recurrent) but not identically equal to zero. 
\end{example}

Further on, the statement of \cite[Proposition 2.7]{c1} can be simply reformulated in our new framework and, if the function $F(\cdot;\cdot)$ is Bohr $({\mathcal B},I',c)$-almost periodic ($({\mathcal B},I',c)$-uniformly recurrent), then the function $\|F(\cdot ;\cdot)\|_{Y}$ is  Bohr $({\mathcal B},I',|c|)$-almost periodic ($({\mathcal B},I',|c|)$-uniformly recurrent). The following fact should be also clarified: If the function $F(\cdot;\cdot)$ is $({\mathcal B},I',c)$-uniformly recurrent, then for each $B\in {\mathcal B}$ we have
\begin{align}\label{tupak12345ce}
\sup_{{\bf t}\in I,x\in B}\bigl\|F({\bf t};x) \bigr\|_{Y}\leq |c|^{-1}\sup_{{\bf t}\in I,|{\bf t}|\geq a, {\bf t} \in I+I'}\bigl\|F({\bf t};x) \bigr\|_{Y},
\end{align}
and for each $x\in X$ the function $F(\cdot;x)$ is identically equal to zero provided that the function $F(\cdot;\cdot)$ is $({\mathcal B},I',c)$-uniformly recurrent and $\lim_{|{\bf t}|\rightarrow +\infty, {\bf t}\in I+I'}F({\bf t};x)=0.$ 

Now we are able to state and prove the following extension of \cite[Proposition 2.17]{c1}:

\begin{prop}\label{mentalnorazgib}
Suppose that $\emptyset  \neq I'\subseteq I \subseteq {\mathbb R}^{n}$ and $I +I' = I.$ If the function $F : I \rightarrow {\mathbb R}$ is $({\mathcal B},I',c)$-uniformly recurrent and $F\neq 0,$ then $c=\pm 1.$ Furthermore, if $F({\bf t})\geq 0$ for all ${\bf t} \in I,$ then $c=1.$
\end{prop}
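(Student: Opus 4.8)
The plan is to exploit the recurrence relation along the sequence $({\bf \tau}_{k})$ to pin down $|c|$ first and then the sign. Since $F : I \rightarrow {\mathbb R}$ is $({\mathcal B},I',c)$-uniformly recurrent (with $X \in {\mathcal B}$, so the parameter $x$ is inert), there is a sequence $({\bf \tau}_{k})$ in $I'$ with $|{\bf \tau}_{k}| \to +\infty$ and $\sup_{{\bf t}\in I}|F({\bf t}+{\bf \tau}_{k})-cF({\bf t})| \to 0$. First I would observe that, by \eqref{tupak12345ce} (the stated consequence that $\sup_{{\bf t}\in I}|F({\bf t})| \leq |c|^{-1}\sup_{{\bf t}\in I, {\bf t}\in I+I'}|F({\bf t})|$, which under $I+I'=I$ says $\|F\|_{\infty} \leq |c|^{-1}\|F\|_{\infty}$), together with the fact that $\|F\|_{\infty}<\infty$ and $F\neq 0$, one already gets $|c| \leq 1$. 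For the reverse inequality I would iterate: since $I+I'=I$, we have ${\bf t}+{\bf \tau}_{k}+{\bf \tau}_{k}\in I$ whenever ${\bf t}\in I$, and a telescoping estimate of the type \eqref{valevo} gives $\sup_{{\bf t}\in I}|F({\bf t}+m{\bf \tau}_{k})-c^{m}F({\bf t})| \to 0$ as $k\to\infty$, for each fixed $m\in{\mathbb N}$; combined with boundedness of $F$ this forces $|c|^{m}\|F\|_{\infty} \leq \|F\|_{\infty}$ for all $m$, hence (as $\|F\|_{\infty}>0$) we cannot have $|c|>1$ — which we already knew — but more usefully, applying the recurrence in the "backward" direction recovers $|c|\geq 1$. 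Concretely, fix ${\bf t}_{0}\in I$ with $F({\bf t}_{0})\neq 0$; writing ${\bf t}={\bf t}_{0}$ in $|F({\bf t}_{0}+{\bf \tau}_{k})-cF({\bf t}_{0})|\to 0$ shows $|F({\bf t}_{0}+{\bf \tau}_{k})| \to |c|\,|F({\bf t}_{0})|$, and since each ${\bf t}_{0}+{\bf \tau}_{k}\in I$ we get $|c|\,|F({\bf t}_{0})| \leq \|F\|_{\infty}$; this alone is not enough, so the real input is that one must also use a point where $|F|$ nearly attains its supremum.

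The cleaner route to $|c|=1$: let $M:=\|F\|_{\infty}=\sup_{{\bf t}\in I}|F({\bf t})| \in (0,\infty)$ (finiteness will need justification — see below). Pick ${\bf s}_{j}\in I$ with $|F({\bf s}_{j})|\to M$. From $|F({\bf s}_{j}+{\bf \tau}_{k})| \geq |c|\,|F({\bf s}_{j})| - \sup_{{\bf t}\in I}|F({\bf t}+{\bf \tau}_{k})-cF({\bf t})|$ and ${\bf s}_{j}+{\bf \tau}_{k}\in I$, letting $k\to\infty$ then $j\to\infty$ yields $M \geq |c|\,M$, i.e. $|c|\leq 1$. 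For $|c|\geq 1$: from $|F({\bf s}_{j}+{\bf \tau}_{k})| \leq |c|\,|F({\bf s}_{j})| + o(1) \leq |c|M + o(1)$, this gives nothing new; instead I would run the argument with the iterate. Using the telescoped estimate, $|F({\bf t}+m{\bf \tau}_{k}) - c^{m}F({\bf t})|$ is small uniformly in ${\bf t}\in I$ for large $k$; but it is more efficient to note that $|F({\bf t}_{0}+{\bf \tau}_{k}+{\bf \tau}_{k'})-cF({\bf t}_{0}+{\bf \tau}_{k'})|\to 0$ and $|F({\bf t}_{0}+{\bf \tau}_{k'})-cF({\bf t}_{0})|\to 0$, so the values $F({\bf t}_{0}+{\bf \tau}_{k})$ cluster at $cF({\bf t}_{0})$, $c^{2}F({\bf t}_{0})$, \dots — all of which lie in $[-M,M]$ — forcing $|c|^{m}|F({\bf t}_{0})| \leq M$ for all $m$, hence $|c|\leq 1$ again; and to get $|c|\geq 1$ one observes that if $|c|<1$ then along ${\bf s}_{j}$ one derives $M\leq |c|M<M$, contradiction, \emph{unless} one is careful that $|c|M\geq M$ is what the supremum-approximating argument actually yields when run as $|c|\,|F({\bf s}_{j})|\le |F({\bf s}_{j}+{\bf \tau}_{k})|+o(1)\le M+o(1)$, giving $|c|M\le M$. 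So in fact I would get $|c|\leq 1$ directly and $|c|\geq 1$ by applying the same reasoning to a "reversed" recurrence sequence, which exists by the translation-invariance-type argument: since $|F({\bf t}+{\bf \tau}_{k})-cF({\bf t})|$ is small, substituting ${\bf t}\mapsto{\bf t}'$ where ${\bf t}'+{\bf \tau}_{k}$ ranges over shifted copies shows $|F({\bf t}')-c^{-1}F({\bf t}'+{\bf \tau}_{k})|$ is small, i.e. $|c^{-1}|$ also satisfies the $\leq 1$ bound, giving $|c|\geq 1$. Hence $|c|=1$, and since $F$ is real-valued, $c=cF({\bf t}_{0})/F({\bf t}_{0})$ must be real (as a limit of ratios of reals, using that $F({\bf t}_{0}+{\bf \tau}_{k})/F({\bf t}_{0})\to c$), so $c\in\{-1,1\}$.

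For the last assertion, suppose $F({\bf t})\geq 0$ everywhere. Then from $F({\bf t}_{0}+{\bf \tau}_{k}) \to cF({\bf t}_{0})$ with ${\bf t}_{0}$ chosen so $F({\bf t}_{0})>0$ (possible since $F\neq 0$ and $F\geq 0$), the left side is a limit of nonnegative numbers, hence $cF({\bf t}_{0})\geq 0$, forcing $c>0$; combined with $c=\pm 1$ this gives $c=1$.

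\textbf{Main obstacle.} The delicate point is establishing $M=\|F\|_{\infty}<\infty$ and, relatedly, getting the \emph{reverse} bound $|c|\geq 1$ cleanly — Proposition \ref{bounded-pazice} gives boundedness only under a geometric condition on $I$ that is not assumed here, so one must instead derive $\|F\|_{\infty}<\infty$ from the recurrence itself via \eqref{tupak12345ce}, and then produce the "inverse" recurrence sequence for $c^{-1}$ directly from the definition (a translation argument in the spirit of Remark \ref{era-cea}(iii)); threading these two together, under the hypothesis $I+I'=I$ which guarantees ${\bf t}+{\bf \tau}_{k}\in I$ and ${\bf t}+m{\bf \tau}_{k}\in I$, is where the proof really has to be careful.
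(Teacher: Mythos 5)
There is a genuine gap, and it is essentially the one you flag yourself at the end: your entire argument runs through $M=\|F\|_{\infty}\in(0,\infty)$, but boundedness of $F$ is neither assumed in the proposition nor derivable from its hypotheses. The inequality \eqref{tupak12345ce} compares two suprema and gives no information when both are infinite, and $(I',c)$-uniformly recurrent functions can be unbounded --- the paper's own Example \ref{rajkomilice}(iii)(a), built from the Haraux--Souplet function, is exactly such a function (with $c=1$); Proposition \ref{bounded-pazice} does not apply since its geometric hypothesis on $I$ is not assumed here. Moreover, even granting $0<M<\infty$, your route to $|c|\geq 1$ does not close: each supremum computation you run (including the ``reversed'' one, obtained by dividing the recurrence relation by $c$) only bounds $\sup_{{\bf s}\in I+{\bf \tau}_{k}}|F({\bf s})|$ in terms of $|c|M$, and since $I+{\bf \tau}_{k}$ is a proper subset of $I$ on which the supremum of $|F|$ need not equal $M$, you never reach $M\leq |c|M$; your own text records that both directions of the computation collapse to $|c|M\leq M$. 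So neither the finiteness of $M$ nor the inequality $|c|\geq 1$ is actually established. (The realness of $c$, via $c=\lim_{k}F({\bf t}_{0}+{\bf \tau}_{k})/F({\bf t}_{0})$, and the final implication $F\geq 0\Rightarrow c>0$ are correct.)

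The paper avoids the global supremum entirely: from \eqref{tupak12345ce} together with $I+I'=I$ and $F\neq 0$ it extracts a number $a>0$ and a sequence $({\bf t}_{k})$ in $I$ with $|{\bf t}_{k}|\to\infty$ and $|F({\bf t}_{k})|>a/2$ (i.e., $F$ does not vanish at infinity), and then invokes the one-dimensional argument of \cite[Proposition 2.17]{c1}, which is designed to work from precisely this input together with continuity (hence finiteness of suprema over compact sets) and iteration of the recurrence inequality along $m{\bf \tau}_{j}$, rather than from $\|F\|_{\infty}<\infty$. To repair your proof you would need to replace every appeal to $M$ by this local mechanism: iterate the recurrence forwards or backwards from the points ${\bf t}_{k}$ to produce, if $|c|\neq 1$, points in translated compact windows where $|F|$ exceeds $|c|^{\mp m}a/4$ with $m$ arbitrarily large, and contradict the bound on such windows coming from $\sup_{{\bf t}\in I}|F({\bf t}+{\bf \tau}_{j'})-cF({\bf t})|\leq\epsilon_{j'}$ applied over a fixed compact set.
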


\begin{proof}
Since we have assumed $I +I' = I$ and $F\neq 0,$ the equation \eqref{tupak12345ce} yields the existence of a finite real number 
$a>0$ and a sequence $({\bf t}_{k})$ in $I$ such that $|F({\bf t}_{k})|>a/2$ for all $k\in {\mathbb N}.$ Then the final conclusion follows by repeating verbatim the arguments contained in the proof of \cite[Proposition 2.17]{c1}.
\end{proof}

\begin{rem}\label{mammaru}
Suppose that $c=1/2$ in Example \ref{dva naiks}. Then the function $F(\cdot;\cdot)$ is real-valued so that the conclusion of Proposition \ref{mentalnorazgib} does not hold if the assumption $I+I'\neq I$ is neglected.
\end{rem}

The most important corollary of Proposition \ref{mentalnorazgib}, which extends the statement of \cite[Proposition 2.6]{c1}, is stated below:

\begin{cor}\label{prcko-instrukt}
Suppose that $\emptyset  \neq I'\subseteq I \subseteq {\mathbb R}^{n},$ $I+I'=I$ and $F : I \times X \rightarrow Y$ is Bohr $({\mathcal B},I',c)$-almost periodic ($({\mathcal B},I',c)$-uniformly recurrent). If $F(\cdot; \cdot)\neq 0,$ then $|c|=1.$
\end{cor}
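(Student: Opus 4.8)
The plan is to reduce the statement for the vector-valued function $F : I \times X \rightarrow Y$ to the scalar, real-valued situation already handled in Proposition \ref{mentalnorazgib}. The starting observation is that $|c|=1$ iff $|c|^{1}$ behaves well under iteration, and what Proposition \ref{mentalnorazgib} really exploits is that a nonzero real-valued $({\mathcal B},I',c)$-uniformly recurrent function forces $c=\pm 1$; taking absolute values of a complex number $c$ we only need to reach the conclusion $|c|=1$, so it suffices to produce \emph{some} nonzero real-valued $(I',|c|)$-uniformly recurrent function out of $F$.

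First I would note that both hypotheses in the statement (Bohr $({\mathcal B},I',c)$-almost periodicity and $({\mathcal B},I',c)$-uniform recurrence) entail that $F(\cdot;\cdot)$ is $({\mathcal B},I,c)$-uniformly recurrent once we know $I+I\subseteq I$ — which holds here because $I+I'=I$ and $I'\subseteq I$ give $I = I+I' \subseteq I+I \subseteq I$ — so it is enough to treat the uniformly recurrent case. Next, since $F(\cdot;\cdot)\neq 0$, there is a point $x_{0}\in X$ with $F(\cdot;x_{0})\not\equiv 0$ on $I$; pick $B\in {\mathcal B}$ with $x_{0}\in B$ (possible since ${\mathcal B}_{X}=X$). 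Applying the remark recorded just before Proposition \ref{mentalnorazgib} — namely that $\|F(\cdot;\cdot)\|_{Y}$ is $({\mathcal B},I',|c|)$-uniformly recurrent whenever $F$ is $({\mathcal B},I',c)$-uniformly recurrent, via the reformulation of \cite[Proposition 2.7]{c1} — I obtain that the real-valued function $g := \|F(\cdot;x_{0})\|_{Y} : I \rightarrow {\mathbb R}$ is $(\{x_{0}\},I',|c|)$-uniformly recurrent, i.e.\ $({\mathcal B}',I',|c|)$-uniformly recurrent for the singleton collection ${\mathcal B}'=\{\{x_{0}\}\}$. Moreover $g\geq 0$ everywhere and $g\neq 0$.

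Now apply Proposition \ref{mentalnorazgib} to $g$ with the collection ${\mathcal B}'$, the same $I$ and $I'$, and the positive real constant $|c|$ in the role of $c$: the hypothesis $I+I'=I$ is exactly what is assumed, $g$ is nonzero and $({\mathcal B}',I',|c|)$-uniformly recurrent, and $g\geq 0$. The proposition's second clause then forces $|c|=1$, which is the desired conclusion.

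The only place that requires care — and what I expect to be the main (mild) obstacle — is checking that the passage $F \mapsto \|F(\cdot;x_{0})\|_{Y}$ genuinely produces a $({\mathcal B}',I',|c|)$-uniformly recurrent \emph{scalar} function in the precise sense of Definition \ref{nafaks123456789012345}: one must verify that the single sequence $({\bf \tau}_{k})$ witnessing $({\mathcal B},I',c)$-uniform recurrence of $F$ for the set $B\ni x_{0}$ also witnesses it for the singleton, and that the reverse triangle inequality $\big|\,\|F({\bf t}+{\bf \tau}_{k};x_{0})\|_{Y} - |c|\,\|F({\bf t};x_{0})\|_{Y}\,\big| \leq \|F({\bf t}+{\bf \tau}_{k};x_{0}) - cF({\bf t};x_{0})\|_{Y}$ transfers the uniform-in-${\bf t}$ supremum bound. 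This is routine, being exactly the multi-dimensional, $c$-analogue of the reasoning behind \cite[Proposition 2.7]{c1} already invoked in the text preceding Proposition \ref{mentalnorazgib}; continuity of $g$ is inherited from continuity of $F$ and of the norm. With that in hand the argument closes.
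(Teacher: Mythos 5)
Your argument is essentially the paper's own proof: fix a point $x_{0}$ with $F(\cdot;x_{0})\not\equiv 0$, pass to the nonnegative scalar function $\|F(\cdot;x_{0})\|_{Y}$, which is $(I',|c|)$-uniformly recurrent by the reverse triangle inequality (the remark preceding Proposition \ref{mentalnorazgib}), and invoke Proposition \ref{mentalnorazgib} to force $|c|=1$. The paper performs exactly this reduction (including the same implicit passage from the Bohr almost periodic case to the uniformly recurrent one), so the proposal is correct and matches the intended argument.
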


\begin{proof}
By our assumption, there exist ${\bf t}_{0}\in I$ and $x\in X$ such that $F({\bf t}_{0};x)\neq 0.$ Further on, there exists $B\in {\mathcal B}$ such that $x\in B$ and this simply implies that the function $F_{x} : I \rightarrow Y$ is  Bohr $({\mathcal B},I',c)$-almost periodic ($({\mathcal B},I',c)$-uniformly recurrent) and not identically equal to zero. Therefore, the function $\|F_{x}(\cdot )\|_{Y}$ is Bohr $({\mathcal B},I',|c|)$-almost periodic ($({\mathcal B},I',|c|)$-uniformly recurrent) and not identically equal to zero. By Proposition \ref{mentalnorazgib}, we get that $|c|=1.$
\end{proof}

If $c = \pm 1,$ then we also say that the function $F(\cdot)$ is Bohr ${\mathcal B}$-almost (anti-)periodic (${\mathcal B}$-uniformly (anti-)recurrent)/Bohr $({\mathcal B},I')$-almost (anti-)periodic ($({\mathcal B},I')$-uniformly (anti-)recurrent). 
Let us recall that there is a great number of very simple examples showing that the notion of $({\mathcal B},I')$-almost periodicity is neither stronger nor weaker than the notion of $({\mathcal B},I)$-almost periodicity, provided that $I+I\subseteq I.$  

The conclusions established in the subsequent example follow similarly as in \cite[Example 2.15]{marko-manuel-ap}:  

\begin{example}\label{rajkomilice}
\begin{itemize}
\item[(i)]
Suppose that 
the complex-valued mapping $t\mapsto \int_{0}^{t}f_{j}(s)\, ds,$ $t\in {\mathbb R}$ is $c$-almost periodic, resp. bounded and $c$-uniformly recurrent ($1\leq j\leq n$). Set
\begin{align*}
F_{1}\bigl(t_{1},\cdot \cdot \cdot,t_{2n}\bigr):=\prod_{j=1}^{n}\int_{t_{j}}^{t_{j+n}}f_{j}(s)\, ds\ \mbox{ and } t_{j}\in {\mathbb R},\ 1\leq j\leq 2n.
\end{align*}
Then the mapping $F_{1}: {\mathbb R}^{2n} \rightarrow {\mathbb C}$ is Bohr $(I',c)$-almost periodic, resp. $(I',c)$-uniformly recurrent, where
$I'=\{({\bf \tau},{\bf \tau}) : {\bf \tau} \in {\mathbb R^{n}} \};$ furthermore, if the function
$t\mapsto (\int_{0}^{t}f_{1}(s)\, ds, \cdot \cdot \cdot, \int_{0}^{t}f_{n}(s)\, ds),$ $t\in {\mathbb R}$ is $c$-almost periodic, resp. bounded and $c$-uniformly recurrent, then the function
$F_{1}(\cdot)$ is Bohr $(I'',c)$-almost periodic, 
resp. $(I'',c)$-uniformly recurrent, where
$I''=\{(a,a,\cdot \cdot \cdot, a) \in {\mathbb R}^{2n} : a\in {\mathbb R}\}.$
\item[(ii)] Suppose that 
an $X$-valued mapping $t\mapsto \int_{0}^{t}f_{j}(s)\, ds,$ $t\in {\mathbb R}$ is $c$-almost periodic, resp. bounded and $c$-uniformly recurrent, as well as that a strongly continuous operator family $(T_{j}(t))_{t\in {\mathbb R}}\subseteq L(X,Y)$ is uniformly bounded ($1\leq j \leq n$). Set
\begin{align*}
F_{2}\bigl(t_{1},\cdot \cdot \cdot,t_{2n}\bigr)&:=\sum_{j=1}^{n}T_{j}(t_{j}-t_{j+n})\int_{t_{j}}^{t_{j+n}}f_{j}(s)\, ds\\&  \mbox{ and } t_{j}\in {\mathbb R},\ 1\leq j\leq 2n.
\end{align*}
Then the mapping $F_{2}: {\mathbb R}^{2n} \rightarrow {\mathbb C}$ is Bohr $(I',c)$-almost periodic, resp. $(I',c)$-uniformly recurrent, where $I'=\{({\bf \tau},{\bf \tau}) : {\bf \tau} \in {\mathbb R^{n}} \}$, but not generally Bohr $c$-almost periodic,
in the case of consideration of almost periodicity; furthermore, if the function $t\mapsto (\int_{0}^{t}f_{1}(s)\, ds, \cdot \cdot \cdot, \int_{0}^{t}f_{n}(s)\, ds),$ $t\in {\mathbb R}$ is $c$-almost periodic, resp. bounded and $c$-uniformly recurrent, then
the function $F_{2}(\cdot)$ is
Bohr $I''$-almost periodic, where
$I''=\{(a,a,\cdot \cdot \cdot, a) \in {\mathbb R}^{2n} : a\in {\mathbb R}\}.
$
\item[(iii)]
Suppose that $\emptyset  \neq I \subseteq {\mathbb R}^{n},$ 
$ I_{0}=[0,\infty)$ or $ I_{0}={\mathbb R},$ ${\bf a}=(a_{1},\cdot \cdot \cdot,a_{n}) \in {\mathbb R}^{n} \neq 0$ and the linear function
$
g({\bf t}):=a_{1}t_{1}+\cdot \cdot \cdot +a_{n}t_{n},$ ${\bf t}=(t_{1},\cdot \cdot \cdot,t_{n})\in I
$
maps surjectively the region $I$
onto $ I_{0}.$ Suppose, further, that $f: I_{0} \rightarrow X$ is a $c$-uniformly recurrent function as well as that a sequence $(\alpha_{k})$ in $I_{0}$ satisfies 
that $\lim_{k\rightarrow +\infty} |\alpha_{k}|=+\infty$ and 
$\lim_{k\rightarrow +\infty}\sup_{t\in I_{0}} \bigl\|f(t+\alpha_{k})-c f(t)\bigr\| =0.$ 
Define $I':=g^{-1}(\{\alpha_{k} : k\in {\mathbb N}\})$ and
$F : I \rightarrow X$ by
$
F({\bf t}):=f(g({\bf t})),$ ${\bf t}\in I.
$
Then $F(\cdot)$ is $(I',c)$-uniformly recurrent, and $F(\cdot )$ is not $c$-almost periodic provided that $f(\cdot)$ is not $c$-almost periodic (note that the conclusions established in \cite[Example 2.12]{marko-manuel-ap} cannot be reformulated for the $c$-uniform recurrence). 
We will provide two illustrative examples of the obtained conclusion as follows:
\begin{itemize}
\item[(a)] It is worth recalling that A. Haraux and P. Souplet have proved, in \cite[Theorem 1.1]{haraux}, that the
function $f: {\mathbb R}\rightarrow {\mathbb R},$ given by
\begin{align*}
f(t):=\sum_{n=1}^{\infty}\frac{1}{n}\sin^{2}\Bigl(\frac{t}{2^{n}} \Bigr)\, dt,\quad t\in {\mathbb R},
\end{align*}
is uniformly continuous, uniformly recurrent (the sequence\\ $(\tau_{k}\equiv 2^{k}\pi)_{k\in {\mathbb N}}$ can be chosen in definition of uniform recurrence) and unbounded; in \cite[Example 2.19(i)]{c1}, we have shown that $f(\cdot)$ is $c$-uniformly recurrent if and only if $c=1.$ Let ${\bf a}=(a_{1},\cdot \cdot \cdot,a_{n}) \in {\mathbb R}^{n} \setminus \{0\},$ let $I'=g^{-1}(\{2^{k}\pi : k\in {\mathbb N}\})$ and let
$F : {\mathbb R}^{n} \rightarrow {\mathbb R}$ be given by $F({\bf t}):=f(a_{1}t_{1}+\cdot \cdot \cdot +a_{n}t_{n}),$ ${\bf t}=(t_{1},\cdot \cdot \cdot,t_{n})\in {\mathbb R}^{n}.$
Then the function $F(\cdot)$ is uniformly continuous, unbounded, $I'$-uniformly recurrent and not almost periodic (\cite{marko-manuel-ap}); furthermore, an application of Proposition \ref{mentalnorazgib} shows that $F(\cdot)$ is $(I',c)$-uniformly recurrent if and only if $c=1.$
\item[(b)] In \cite[Example 2.20]{c1}, we have proved that the function $g: {\mathbb R}\rightarrow {\mathbb R},$ given by
\begin{align*}
f(t):=(\sin t) \cdot \sum_{n=1}^{\infty}\frac{1}{n}\sin^{2}\Bigl(\frac{t}{3^{n}} \Bigr),\quad t\in {\mathbb R},
\end{align*}
is $c$-uniformly recurrent if and only if $c=\pm 1$ (the sequence $(\tau_{k}\equiv 3^{k}\pi)_{k\in {\mathbb N}}$ can be chosen in definition of uniform anti-recurrence). 
Let ${\bf a} \in {\mathbb R}^{n} \setminus \{0\},$ let $I'=g^{-1}(\{3^{k}\pi : k\in {\mathbb N}\})$ and let
$F : {\mathbb R}^{n} \rightarrow {\mathbb R}$ be defined as in (a).
Then the function $F(\cdot)$ is uniformly continuous, unbounded, $I'$-uniformly anti-recurrent and not almost periodic; furthermore, an application of Proposition \ref{mentalnorazgib} shows that $F(\cdot)$ is $(I',c)$-uniformly recurrent if and only if $c=\pm 1.$
\end{itemize}
\end{itemize}
\end{example}

Set $lI':=\{l{\bf t} : {\bf t}\in I'\}$ for all $l\in {\mathbb N}.$ The following result extends \cite[Proposition 2.9]{c1} for $c$-almost periodic functions and $c$-uniformly recurrent functions:

\begin{prop}\label{jugosi}
Suppose that $l\in {\mathbb N},$ 
$\emptyset  \neq I'\subseteq I \subseteq {\mathbb R}^{n},$ $I +I' \subseteq I$ and $F : I \times X \rightarrow Y$ is 
Bohr $({\mathcal B},I',c)$-almost periodic ($({\mathcal B},I',c)$-uniformly recurrent). Then $ lI'\subseteq I,$
$I+lI'\subseteq I$
and
$F(\cdot;\cdot)$ is Bohr $({\mathcal B},lI',c^{l})$-almost periodic ($({\mathcal B},lI',c^{l})$-uniformly recurrent).
\end{prop}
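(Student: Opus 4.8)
The plan is to prove the statement by induction on $l$, reducing everything to the telescoping estimate already recorded in Remark \ref{era-cea}(i) (equation \eqref{valevo}), which is the multiplicative analogue of the familiar ``$\epsilon$-period iteration'' trick. First I would establish the purely set-theoretic claims. Since $I+I'\subseteq I$, an immediate induction gives $I+lI'\subseteq I$ for every $l\in\mathbb N$: indeed $I+(l+1)I'=(I+lI')+I'\subseteq I+I'\subseteq I$, using that $(l+1)I'=lI'+I'$ as subsets of $\mathbb R^n$. Fixing any $\mathbf t_0\in I$ (such a point exists as $I\neq\emptyset$) and noting $\mathbf t_0+lI'\subseteq I$, we also get $lI'\subseteq I-\mathbf t_0$; and in the typical situation $0\in\overline I$ or $I$ a cone one even has $lI'\subseteq I$ directly, but in any case $lI'$ is a legitimate index set for Definition \ref{nafaks123456789012345} applied with the pair $(I,lI')$.

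Next comes the periodicity assertion. Suppose $F$ is Bohr $(\mathcal B,I',c)$-almost periodic, fix $B\in\mathcal B$ and $\epsilon>0$. Apply the hypothesis with $\epsilon/(l\max(1,|c|)^{l-1})$ in place of $\epsilon$ to obtain a length $L>0$ such that every ball $B(\mathbf t_0,L)\cap I'$ (for $\mathbf t_0\in I'$) contains a point $\boldsymbol\tau$ with $\|F(\mathbf t+\boldsymbol\tau;x)-cF(\mathbf t;x)\|_Y\leq\epsilon/(l\max(1,|c|)^{l-1})$ for all $\mathbf t\in I$, $x\in B$. Given $\mathbf t_0\in lI'$, write $\mathbf t_0=l\mathbf s_0$ with $\mathbf s_0\in I'$, pick $\boldsymbol\tau\in B(\mathbf s_0,L)\cap I'$ as above, and observe $l\boldsymbol\tau\in B(l\mathbf s_0,lL)\cap lI'=B(\mathbf t_0,lL)\cap lI'$. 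The point is now that $l\boldsymbol\tau$ is a ``$(\epsilon,c^l)$-period'' of $F$: applying \eqref{valevo} mutatis mutandis — i.e. telescoping
\begin{align*}
\bigl\|F(\mathbf t+l\boldsymbol\tau;x)-c^{l}F(\mathbf t;x)\bigr\|_Y
&\leq\sum_{j=1}^{l}|c|^{j-1}\bigl\|F\bigl(\mathbf t+(l-j+1)\boldsymbol\tau;x\bigr)-cF\bigl(\mathbf t+(l-j)\boldsymbol\tau;x\bigr)\bigr\|_Y,
\end{align*}
where each summand is legitimate because $\mathbf t+(l-j)\boldsymbol\tau\in I+lI'\subseteq I$ and $\boldsymbol\tau\in I'$ — each of the $l$ terms is at most $|c|^{l-1}\cdot\epsilon/(l\max(1,|c|)^{l-1})\leq\epsilon/l$, so the sum is $\leq\epsilon$. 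Hence $lL$ works as the length for $c^l$-almost periodicity of $F$ over the index pair $(I,lI')$, which is exactly the claim.

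For the $(\mathcal B,I',c)$-uniformly recurrent case the argument is the same but simpler: given $B\in\mathcal B$, take the sequence $(\boldsymbol\tau_k)$ in $I'$ with $|\boldsymbol\tau_k|\to\infty$ and $\sup_{\mathbf t\in I,x\in B}\|F(\mathbf t+\boldsymbol\tau_k;x)-cF(\mathbf t;x)\|_Y\to 0$, and use the sequence $(l\boldsymbol\tau_k)$ in $lI'$; then $|l\boldsymbol\tau_k|=l|\boldsymbol\tau_k|\to\infty$, and the telescoping estimate above gives $\sup_{\mathbf t\in I,x\in B}\|F(\mathbf t+l\boldsymbol\tau_k;x)-c^{l}F(\mathbf t;x)\|_Y\leq\sum_{j=1}^{l}|c|^{j-1}\sup_{\mathbf t\in I,x\in B}\|F(\mathbf t+\boldsymbol\tau_k;x)-cF(\mathbf t;x)\|_Y\to 0$ as $k\to\infty$. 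I do not expect any genuine obstacle here; the only point requiring a little care — and the one place a careless write-up could go wrong — is bookkeeping the constant $\max(1,|c|)^{l-1}$ in the almost-periodic case, since $c$ need not lie on the unit circle (indeed the whole point of working with general $c\in\mathbb C\setminus\{0\}$), so the geometric factors $|c|^{j-1}$ in the telescoping sum are not automatically bounded by $1$. Everything else is a direct transcription of the one-dimensional proof of \cite[Proposition 2.9]{c1}, with $\mathbf t_0\mapsto l\mathbf t_0$ and balls scaled by $l$.
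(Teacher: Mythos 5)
Your argument is essentially the paper's: the set-theoretic claims by induction from $I+I'\subseteq I$, and the $c^{l}$-almost periodicity via the telescoping identity
\[
F\bigl({\bf t}+l\tau;x\bigr)-c^{l}F({\bf t};x)
=\sum_{j=0}^{l-1}c^{j}\Bigl[ F\bigl({\bf t}+(l-j)\tau;x\bigr)- c F\bigl({\bf t}+(l-j-1)\tau;x\bigr)\Bigr],
\]
which is exactly the identity the paper invokes (your version is the same sum reindexed, with the norm taken termwise); your bookkeeping of the factor $\max(1,|c|)^{l-1}$ and the scaling of the ball radius to $lL$ are correct and are the only points of substance. Two small set-theoretic slips are worth fixing, though. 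First, $(l+1)I'=lI'+I'$ is false in general — only $(l+1)I'\subseteq lI'+I'$ holds, since $lI'+I'$ contains sums $l{\bf s}+{\bf t}$ with ${\bf s}\neq{\bf t}$ — but the inclusion is all your induction for $I+lI'\subseteq I$ actually needs, so no harm is done there. Second, the proposition asserts $lI'\subseteq I$ itself, and your hedge (deducing only $lI'\subseteq I-{\bf t}_{0}$ and appealing to ``the typical situation'') leaves that conclusion unproved; the intended argument is the immediate induction using the hypothesis $I'\subseteq I$: the base case is $1\cdot I'=I'\subseteq I$, and if $j{\bf t}\in I$ for all ${\bf t}\in I'$ then $(j+1){\bf t}=j{\bf t}+{\bf t}\in I+I'\subseteq I$. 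With that one line inserted, your proof is complete and coincides with the paper's.
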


\begin{proof}
Since $I'\subseteq I$ and $I+I'\subseteq I,$ we inductively get that $ jI'\subseteq I$ and
$I+jI'\subseteq I$ for all $j\in {\mathbb N}.$ Keeping this in mind, the proof simply follows from the corresponding definitions and the identity
(${\bf t}\in I,$ $\tau \in I'$):
\begin{align*}
F\bigl({\bf t}+l\tau\bigr)-c^{l}F({\bf t})
=\sum_{j=0}^{l-1}c^{j}\Bigl[ F\bigl({\bf t}+(l-j)\tau\bigr)- c F\bigl({\bf t}+(l-j-1)\tau\bigr)\Bigr].
\end{align*}
\end{proof}

Suppose now that:
\begin{align}\label{rasta123456}
p\in {\mathbb Z} \setminus \{0\},\ q\in {\mathbb N},\ (p,q)=1,\ |c|=1 \mbox{ and }\arg(c)=\pi p/q.
\end{align}
The most important corollary of Proposition \ref{jugosi}, which extends \cite[Corollary 2.10]{c1}, follows by plugging $l=q:$

\begin{cor}\label{jugosi1}
Suppose that \eqref{rasta123456} holds, 
$\emptyset  \neq I'\subseteq I \subseteq {\mathbb R}^{n},$ $I +I' \subseteq I$ and $F : I \times X \rightarrow Y$ is 
Bohr $({\mathcal B},I',c)$-almost periodic ($({\mathcal B},I',c)$-uniformly recurrent).
Then the following holds:
\begin{itemize}
\item[(i)] If $p$ is even, then $F(\cdot;\cdot)$ is Bohr $({\mathcal B},qI')$-almost periodic ($({\mathcal B},qI')$-uniformly recurrent).
\item[(ii)] If $p$ is odd, then $F(\cdot;\cdot)$ is Bohr $({\mathcal B},qI')$-almost anti-periodic ($({\mathcal B},qI')$-uniformly anti-recurrent).
\end{itemize}
\end{cor}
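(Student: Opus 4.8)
The plan is to derive Corollary~\ref{jugosi1} as an immediate consequence of Proposition~\ref{jugosi} applied with $l=q$, together with a short computation identifying the value of $c^{q}$ under the hypothesis \eqref{rasta123456}. First I would invoke Proposition~\ref{jugosi} with $l:=q\in{\mathbb N}$: since $\emptyset\neq I'\subseteq I\subseteq{\mathbb R}^{n}$ and $I+I'\subseteq I$, the proposition gives $qI'\subseteq I$, $I+qI'\subseteq I$, and that $F(\cdot;\cdot)$ is Bohr $({\mathcal B},qI',c^{q})$-almost periodic (respectively $({\mathcal B},qI',c^{q})$-uniformly recurrent). So the only remaining point is to evaluate $c^{q}$.

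Here I would use \eqref{rasta123456}: since $|c|=1$ and $\arg(c)=\pi p/q$, we have $c=e^{i\pi p/q}$, hence $c^{q}=e^{i\pi p}=(-1)^{p}$. Thus if $p$ is even, $c^{q}=1$, and the conclusion from Proposition~\ref{jugosi} reads that $F(\cdot;\cdot)$ is Bohr $({\mathcal B},qI',1)$-almost periodic ($({\mathcal B},qI',1)$-uniformly recurrent), which by the terminology fixed after Corollary~\ref{prcko-instrukt} (the case $c=1$) is precisely Bohr $({\mathcal B},qI')$-almost periodic ($({\mathcal B},qI')$-uniformly recurrent); this is part (i). If $p$ is odd, $c^{q}=-1$, so $F(\cdot;\cdot)$ is Bohr $({\mathcal B},qI',-1)$-almost periodic ($({\mathcal B},qI',-1)$-uniformly recurrent), i.e.\ Bohr $({\mathcal B},qI')$-almost anti-periodic ($({\mathcal B},qI')$-uniformly anti-recurrent) by the same terminological convention; this is part (ii).

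Since every step is either a direct citation of Proposition~\ref{jugosi} or the elementary identity $e^{i\pi p}=(-1)^{p}$, there is no real obstacle here; the proof is a two-line corollary. The only thing to be mildly careful about is the bookkeeping of the coprimality assumption $(p,q)=1$ in \eqref{rasta123456} — it is not actually needed for the argument (the computation $c^{q}=(-1)^{p}$ holds regardless), but it guarantees that $q$ is the \emph{smallest} natural number with $c^{q}\in\{\pm1\}$, which is what makes the statement sharp; I would simply note this in passing rather than belabour it. Accordingly the written proof will be: ``This follows by applying Proposition~\ref{jugosi} with $l=q$ and noting that $c^{q}=e^{i\pi p}=(-1)^{p}$ under the assumption \eqref{rasta123456}.''
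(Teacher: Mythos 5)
Your proposal is correct and is exactly the paper's argument: the author states that the corollary ``follows by plugging $l=q$'' into Proposition~\ref{jugosi}, and your computation $c^{q}=e^{i\pi p}=(-1)^{p}$ supplies the one elementary detail the paper leaves implicit. Nothing further is needed.
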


Similarly we can prove the following:

\begin{prop}\label{jugosi123}
Suppose that $|c|=1,$ $\arg(c)\in \pi {\mathbb Q}$, 
$\emptyset  \neq I'\subseteq I \subseteq {\mathbb R}^{n},$ $I +I' \subseteq I$ and $F : I \times X \rightarrow Y$ is 
Bohr $({\mathcal B},I',c)$-almost periodic ($({\mathcal B},I',c)$-uniformly recurrent). Define $C_{c}:=\{l\in {\mathbb N} : c^{l}=1\}$ and $C_{c,-1} :=\{l\in {\mathbb N} : c^{l}=-1\}.$
If $S$ is any finite non-empty subset of $C_{c},$ resp. $C_{c,-1},$ and $I_{S}':=\bigcup_{l\in S}lI',$ then $F(\cdot;\cdot)$ is Bohr $({\mathcal B},I'_{S})$-almost periodic ($({\mathcal B},I_{S}')$-uniformly recurrent), resp. Bohr $({\mathcal B},I_{S}')$-almost anti-periodic ($({\mathcal B},I_{S}')$-uniformly anti-recurrent).
\end{prop}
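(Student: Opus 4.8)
The plan is to reduce everything to Proposition \ref{jugosi} and then carry out a routine \emph{maximum-over-a-finite-set} amalgamation of the length parameters. First I would record the structural part: by Proposition \ref{jugosi} we have $lI'\subseteq I$ and $I+lI'\subseteq I$ for every $l\in {\mathbb N}$, hence $I_{S}'=\bigcup_{l\in S}lI'\subseteq I$ and $I+I_{S}'=\bigcup_{l\in S}(I+lI')\subseteq I$, and of course $I_{S}'\neq\emptyset$; thus the notions appearing in the conclusion are meaningful for the set $I_{S}'$. Next, for each fixed $l\in S$, Proposition \ref{jugosi} applied with this $l$ shows that $F(\cdot;\cdot)$ is Bohr $({\mathcal B},lI',c^{l})$-almost periodic (resp. $({\mathcal B},lI',c^{l})$-uniformly recurrent). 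Since $S\subseteq C_{c}$ (resp. $S\subseteq C_{c,-1}$), we have $c^{l}=1$ (resp. $c^{l}=-1$) for \emph{every} $l\in S$ simultaneously, so in the first alternative $F$ is Bohr $({\mathcal B},lI')$-almost periodic for each $l\in S$, and in the second alternative $F$ is Bohr $({\mathcal B},lI')$-almost anti-periodic for each $l\in S$.

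For the almost periodic case, fix $B\in {\mathcal B}$ and $\epsilon>0$. For each $l\in S$ the Bohr $({\mathcal B},lI',c^{l})$-almost periodicity furnishes a number $l_{l}>0$ such that every ball of radius $l_{l}$ centered at a point of $lI'$ meets $lI'$ in a point ${\bf \tau}$ with $\|F({\bf t}+{\bf \tau};x)-c^{l}F({\bf t};x)\|_{Y}\leq\epsilon$ for all ${\bf t}\in I$, $x\in B$. Put $L:=\max_{l\in S}l_{l}$, which is finite because $S$ is finite. Given ${\bf t}_{0}\in I_{S}'$, choose $l\in S$ with ${\bf t}_{0}\in lI'$; the corresponding ${\bf \tau}\in B({\bf t}_{0},l_{l})\cap lI'\subseteq B({\bf t}_{0},L)\cap I_{S}'$ then satisfies the required estimate with multiplier $c^{l}=1$ (resp. $=-1$), which is exactly Bohr $({\mathcal B},I_{S}')$-almost periodicity (resp. Bohr $({\mathcal B},I_{S}')$-almost anti-periodicity). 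The uniformly recurrent case is even simpler: pick any single $l\in S$; the recurrence sequence $({\bf \tau}_{k})$ provided by Proposition \ref{jugosi} for $lI'$ already lies in $lI'\subseteq I_{S}'$, satisfies $\lim_{k\to+\infty}|{\bf \tau}_{k}|=+\infty$, and $\lim_{k\to+\infty}\sup_{{\bf t}\in I,x\in B}\|F({\bf t}+{\bf \tau}_{k};x)-c^{l}F({\bf t};x)\|_{Y}=0$ with $c^{l}=\pm1$ as appropriate, yielding $({\mathcal B},I_{S}')$-uniform (anti-)recurrence.

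I do not expect a genuine obstacle here; the result is a bookkeeping consolidation of Proposition \ref{jugosi}. The only point requiring attention — and precisely the reason the hypothesis forbids mixing elements of $C_{c}$ with elements of $C_{c,-1}$ — is that for the amalgamated set $I_{S}'$ the multiplier must be a single constant ($1$ in the first alternative, $-1$ in the second); if $S$ met both $C_{c}$ and $C_{c,-1}$, then for different base points ${\bf t}_{0}\in I_{S}'$ one would produce periods relative to two genuinely different notions ($c=1$ and $c=-1$), which cannot be merged into one statement.
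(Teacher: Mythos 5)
Your proposal is correct and follows essentially the route the paper intends: the paper gives no separate argument for Proposition \ref{jugosi123} (it only says ``similarly we can prove''), the implicit proof being exactly the reduction to Proposition \ref{jugosi} for each $l\in S$ followed by the finite amalgamation you carry out. Your max-over-$S$ bookkeeping for the Bohr case and the single-$l$ observation for the recurrent case fill in the omitted details correctly.
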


The subsequent result follows from the argumentation contained in the proof of \cite[Proposition 2.11(i)]{c1}:

\begin{prop}\label{voliosam}
Let $|c|=1$ and $\arg(c)/\pi \notin {\mathbb Q}.$ If $\emptyset  \neq I'\subseteq I \subseteq {\mathbb R}^{n},$ $I +I' \subseteq I,$
$lI'=I'$ for all $l\in {\mathbb N}$ and $F : I \times X \rightarrow Y$ is a  bounded, Bohr $({\mathcal B},I',c)$-almost periodic ($({\mathcal B},I',c)$-uniformly recurrent) function,
then the function $F(\cdot;\cdot)$ is Bohr $({\mathcal B},I',c)$-almost periodic ($({\mathcal B},I',c)$-uniformly recurrent) for all $c'\in S_{1}.$ 
\end{prop}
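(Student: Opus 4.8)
The plan is to deduce the statement from Proposition \ref{jugosi}, the boundedness of $F,$ and the classical fact that the forward orbit of an irrational rotation of the circle is dense. First I would record that, since $|c|=1$ and $\arg(c)/\pi \notin {\mathbb Q}$ --- whence also $\arg(c)/(2\pi)\notin {\mathbb Q},$ being half of an irrational number --- the sequence $(c^{m})_{m\in {\mathbb N}}$ is dense in $S_{1};$ this is standard and follows, e.g., from Dirichlet's pigeonhole principle or from Weyl equidistribution. Next, by Proposition \ref{jugosi}, for every $m\in {\mathbb N}$ the function $F(\cdot;\cdot)$ is Bohr $({\mathcal B},mI',c^{m})$-almost periodic (resp. $({\mathcal B},mI',c^{m})$-uniformly recurrent); since $mI'=I',$ this says that $F(\cdot;\cdot)$ is Bohr $({\mathcal B},I',c^{m})$-almost periodic (resp. $({\mathcal B},I',c^{m})$-uniformly recurrent) for every $m\in {\mathbb N}.$ It therefore remains, for an arbitrary $c'\in S_{1},$ to approximate $c'$ by a power $c^{m}$ and to absorb the error through the sup-norm bound on $F$ --- this is precisely the one-dimensional argument from the proof of \cite[Proposition 2.11(i)]{c1}, transplanted to the present multi-dimensional setting.

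For the Bohr almost periodic case, fix $c'\in S_{1},$ $B\in {\mathcal B}$ and $\epsilon>0,$ and set $M:=\sup_{{\bf t}\in I,\,x\in B}\|F({\bf t};x)\|_{Y},$ which is finite by the boundedness hypothesis; we may assume $M>0,$ the case $M=0$ being trivial since then $F(\cdot;x)$ vanishes on $I$ for each $x\in B.$ Choose $m\in {\mathbb N}$ with $|c^{m}-c'|<\epsilon/M,$ and apply the Bohr $({\mathcal B},I',c^{m})$-almost periodicity of $F$ to the pair $(B,\epsilon):$ there is $l>0$ such that for every ${\bf t}_{0}\in I'$ one finds ${\bf \tau} \in B({\bf t}_{0},l)\cap I'$ with $\|F({\bf t}+{\bf \tau};x)-c^{m}F({\bf t};x)\|_{Y}\le \epsilon$ for all ${\bf t}\in I,$ $x\in B.$ For this same ${\bf \tau},$
\begin{align*}
\bigl\|F({\bf t}+{\bf \tau};x)-c'F({\bf t};x)\bigr\|_{Y}&\le \bigl\|F({\bf t}+{\bf \tau};x)-c^{m}F({\bf t};x)\bigr\|_{Y}+\bigl|c^{m}-c'\bigr|\cdot \bigl\|F({\bf t};x)\bigr\|_{Y}\\&\le \epsilon+\frac{\epsilon}{M}\cdot M=2\epsilon,\quad {\bf t}\in I,\ x\in B,
\end{align*}
which, after the usual rescaling of $\epsilon,$ is exactly Bohr $({\mathcal B},I',c')$-almost periodicity.

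The $({\mathcal B},I',c')$-uniform recurrence follows by the same device. Fix $c'\in S_{1}$ and $B\in {\mathcal B},$ let $M$ be as above, and again assume $M>0.$ For each $k\in {\mathbb N}$ choose $m_{k}\in {\mathbb N}$ with $|c^{m_{k}}-c'|<1/(kM),$ invoke the $({\mathcal B},I',c^{m_{k}})$-uniform recurrence of $F,$ and pick from the defining sequence a term ${\bf \tau}_{k}\in I'$ with $|{\bf \tau}_{k}|>k$ and $\sup_{{\bf t}\in I,\,x\in B}\|F({\bf t}+{\bf \tau}_{k};x)-c^{m_{k}}F({\bf t};x)\|_{Y}<1/k$ (possible, since that sequence has terms of arbitrarily large modulus and drives the supremum to $0$). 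Then $|{\bf \tau}_{k}|\to+\infty$ and $\sup_{{\bf t}\in I,\,x\in B}\|F({\bf t}+{\bf \tau}_{k};x)-c'F({\bf t};x)\|_{Y}\le 1/k+|c^{m_{k}}-c'|\,M\le 2/k\to0,$ so $({\bf \tau}_{k})$ witnesses the claim. I do not anticipate a serious obstacle: the only conceptual ingredient is the density of $(c^{m})_{m\in {\mathbb N}}$ in $S_{1},$ while the hypotheses $lI'=I'$ and $\sup\|F\|_{Y}<\infty$ are precisely what make Proposition \ref{jugosi} applicable and the error absorption legitimate; everything else is triangle-inequality bookkeeping.
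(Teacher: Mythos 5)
Your proof is correct and follows essentially the route the paper intends: the paper simply refers to the one-dimensional argument of \cite[Proposition 2.11(i)]{c1}, which is exactly your combination of the density of $(c^{m})_{m\in\NN}$ in $S_{1}$, the passage to $(\mathcal{B},mI',c^{m})$-almost periodicity via Proposition \ref{jugosi}, and absorption of the error $|c^{m}-c'|\,\|F({\bf t};x)\|_{Y}$ through the boundedness of $F$. You also correctly identify the role of the hypothesis $lI'=I'$ (keeping the new $(\epsilon,c^{m})$-periods inside $I'$), which is the only genuinely multi-dimensional adjustment needed.
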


Concerning the convolution invariance of introduced spaces of Bohr $({\mathcal B},c)$-almost periodic type functions, we would like to state the following result:

\begin{prop}\label{convdiaggacece}
Suppose that $h\in L^{1}({\mathbb R}^{n}),$ $\emptyset  \neq I'\subseteq {\mathbb R}^{n}$ and the function $F(\cdot ; \cdot)$ is
Bohr $({\mathcal B},I',c)$-almost periodic ($({\mathcal B},I',c)$-uniformly recurrent). 
If
\begin{itemize}
\item[$(B)_{b}$:] For every $B\in {\mathcal B},$ there exists a finite real constant $c_{B}>0$ such that $\sup_{{\bf t}\in {\mathbb R}^{n}, x\in B}\|F({\bf t};x)\|_{Y}\leq c_{B}, $
\end{itemize}
then the function 
$$
(h\ast F)({\bf t};x):=\int_{{\mathbb R}^{n}}h(\sigma)F({\bf t}-\sigma;x)\, d\sigma,\quad {\bf t}\in {\mathbb R}^{n},\ x\in X
$$
is Bohr $({\mathcal B},I',c)$-almost periodic ($({\mathcal B},I',c)$-uniformly recurrent) and satisfies \emph{$(B)_{b}$}.
\end{prop}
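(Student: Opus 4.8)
The plan is to reduce the whole statement to one triangle inequality for a Bochner integral, using that here the time domain is all of $\RR^{n}$ (so that the translates $\mathbf{t}-\sigma$ and $\mathbf{t}+\tau-\sigma$ never leave the domain of $F$) and that passing from $F$ to $h\ast F$ costs only the multiplicative constant $\|h\|_{L^{1}(\RR^{n})}$. We may assume $\|h\|_{L^{1}(\RR^{n})}>0$, since for $h\equiv 0$ we have $h\ast F\equiv 0$, for which all three assertions are immediate. First I would check that $h\ast F$ is well defined and satisfies $(B)_{b}$: fix $B\in\mathcal{B}$, $x\in B$ and $\mathbf{t}\in\RR^{n}$; by continuity of $F(\cdot\,;x)$ and measurability of $h$ the map $\sigma\mapsto h(\sigma)F(\mathbf{t}-\sigma;x)$ is strongly measurable, and by $(B)_{b}$ it is dominated by $c_{B}|h(\cdot)|\in L^{1}(\RR^{n})$, so the Bochner integral $(h\ast F)(\mathbf{t};x)$ exists and
\[
\bigl\|(h\ast F)(\mathbf{t};x)\bigr\|_{Y}\leq\int_{\RR^{n}}|h(\sigma)|\,\bigl\|F(\mathbf{t}-\sigma;x)\bigr\|_{Y}\,d\sigma\leq c_{B}\|h\|_{L^{1}(\RR^{n})},
\]
giving $(B)_{b}$ with $c_{B}$ replaced by $c_{B}\|h\|_{L^{1}(\RR^{n})}$. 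Joint continuity of $(h\ast F)(\cdot\,;\cdot)$ on $\RR^{n}\times X$ follows from the dominated convergence theorem: along any $(\mathbf{t}_{m},x_{m})\to(\mathbf{t},x)$, the $x_{m}$ eventually lie in a common member of $\mathcal{B}$, $F(\mathbf{t}_{m}-\sigma;x_{m})\to F(\mathbf{t}-\sigma;x)$ pointwise in $\sigma$ by continuity of $F$, and the integrands are dominated by a fixed $L^{1}$ function by $(B)_{b}$.

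Next I would prove the Bohr $(\mathcal{B},I',c)$-almost periodicity. Let $B\in\mathcal{B}$ and $\epsilon>0$. Applying the hypothesis with the tolerance $\epsilon/\|h\|_{L^{1}(\RR^{n})}$, we obtain $l>0$ such that for each $\mathbf{t}_{0}\in I'$ there is $\tau\in B(\mathbf{t}_{0},l)\cap I'$ with $\|F(\mathbf{s}+\tau;x)-cF(\mathbf{s};x)\|_{Y}\leq\epsilon/\|h\|_{L^{1}(\RR^{n})}$ for all $\mathbf{s}\in\RR^{n}$ and $x\in B$. Taking $\mathbf{s}=\mathbf{t}-\sigma$ and integrating against $|h|$,
\begin{align*}
\bigl\|(h\ast F)(\mathbf{t}+\tau;x)-c(h\ast F)(\mathbf{t};x)\bigr\|_{Y}
&\leq\int_{\RR^{n}}|h(\sigma)|\,\bigl\|F(\mathbf{t}+\tau-\sigma;x)-cF(\mathbf{t}-\sigma;x)\bigr\|_{Y}\,d\sigma\\
&\leq\epsilon,\qquad \mathbf{t}\in\RR^{n},\ x\in B,
\end{align*}
which is exactly the required estimate for $h\ast F$. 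The $(\mathcal{B},I',c)$-uniformly recurrent case is handled identically: for $B\in\mathcal{B}$, let $(\tau_{k})$ in $I'$ with $|\tau_{k}|\to+\infty$ and $\delta_{k}:=\sup_{\mathbf{t}\in\RR^{n},\,x\in B}\|F(\mathbf{t}+\tau_{k};x)-cF(\mathbf{t};x)\|_{Y}\to 0$ be furnished by the recurrence of $F$; the same integral estimate gives $\sup_{\mathbf{t}\in\RR^{n},\,x\in B}\|(h\ast F)(\mathbf{t}+\tau_{k};x)-c(h\ast F)(\mathbf{t};x)\|_{Y}\leq\|h\|_{L^{1}(\RR^{n})}\delta_{k}\to 0$, as desired.

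I expect that the only point requiring genuine care is the joint continuity of $h\ast F$ on $\RR^{n}\times X$ (together with the strong measurability needed to make sense of the Bochner integrals), which is where $(B)_{b}$ and the continuity of $F$ are used in tandem with dominated convergence. The almost periodicity/recurrence portion is then an immediate consequence of the triangle inequality for the Bochner integral, the factor $\|h\|_{L^{1}(\RR^{n})}$ being absorbed at the outset by sharpening the tolerance to $\epsilon/\|h\|_{L^{1}(\RR^{n})}$; in particular, no properties of $I'$ beyond those already built into Definition \ref{nafaks123456789012345} are needed.
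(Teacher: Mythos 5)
Your proposal is correct and follows essentially the same route as the paper's proof: well-definedness and $(B)_{b}$ via domination by $c_{B}|h|$, continuity of $h\ast F$ via the dominated convergence theorem, and the almost periodicity/recurrence via the triangle inequality for the integral after sharpening the tolerance by the factor $\|h\|_{L^{1}({\mathbb R}^{n})}$. The only (welcome) extra care you take beyond the paper is separating out the trivial case $h\equiv 0$ and spelling out the strong measurability and joint continuity details.
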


\begin{proof}
Since $h\in L^{1}({\mathbb R}^{n}),$ the prescribed assumptions imply that
the function $(h\ast F)(\cdot ;\cdot)$ is well defined and satisfies $(B)_{b}$.
The continuity of function $(h\ast F)(\cdot ;\cdot)$ follows from the dominated convergence theorem, the continuity of the function $F(\cdot;\cdot)$ and condition $(B)_{b}$. Let $B\in {\mathcal B}$ and $\epsilon>0$ be fixed.
Then there exists $l>0$ such that for each ${\bf t}_{0} \in I'$ there exists ${\bf \tau} \in B({\bf t}_{0},l) \cap I'$ such that
\eqref{ujshe} holds with $I={\mathbb R}^{n}.$
Therefore,
\begin{align*}
\Bigl\| & (h\ast F)({\bf t}+\tau;x) -c\bigl( h\ast F)({\bf t}; x)\Bigr\|_{Y}
\\& \leq  \int_{{\mathbb R}^{n}}|h(\sigma)| \cdot \Bigl\|F({\bf t}+\tau-\sigma;x)-c
F({\bf t}-\sigma; x)\Bigr\|_{Y}\, d\sigma,
\end{align*}
for any ${\bf t}\in {\mathbb R}^{n}$ and $x\in B.$ This simply implies the required.
\end{proof}

The following result, which has recently been considered in \cite{marko-manuel-ap} in the case that $c=1,$ can be slightly extended for the Stepanov classes of $c$-almost periodic type functions (see the forthcoming monograph \cite{nova-man} for more details): 

\begin{prop}\label{krucijaceq}
Let $(R({\bf t}))_{{\bf t}> {\bf 0}}\subseteq L(X,Y)$ be a strongly continuous operator family such that
$\int_{(0,\infty)^{n}}\|R({\bf t} )\|\, d{\bf t}<\infty .$ If $f : {\mathbb R}^{n} \rightarrow X$ is $c$-almost periodic, then the function $F: {\mathbb R}^{n} \rightarrow Y,$ given by
\begin{align*}
F({\bf t}):=\int^{t_{1}}_{-\infty}\int^{t_{2}}_{-\infty}\cdot \cdot \cdot \int^{t_{n}}_{-\infty} R({\bf t}-{\bf s})f({\bf s})\, d{\bf s},\quad {\bf t}\in {\mathbb R}^{n},
\end{align*}
is well-defined and $c$-almost periodic.
\end{prop}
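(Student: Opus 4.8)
The plan is to rewrite $F(\cdot)$ as a multi-dimensional convolution of $f(\cdot)$ with the operator kernel $R(\cdot)$ and then argue exactly as in the proof of Proposition~\ref{convdiaggacece}. First I would perform the change of variables $\sigma={\bf t}-{\bf s}$, under which the iterated integral defining $F({\bf t})$ turns into
$$
F({\bf t})=\int_{(0,\infty)^{n}}R(\sigma)f({\bf t}-\sigma)\, d\sigma,\quad {\bf t}\in {\mathbb R}^{n};
$$
the identification of the iterated integral with the integral over $(0,\infty)^{n}$, together with the well-definedness of both, will be justified by absolute integrability (Fubini--Tonelli). The one preliminary fact needed is that $f(\cdot)$ is bounded: if $f\equiv 0$ there is nothing to prove, and if $f\not\equiv 0$ then Corollary~\ref{prcko-instrukt}, applied with $I=I'={\mathbb R}^{n}$, forces $|c|=1$; by the reformulation of \cite[Proposition 2.7]{c1} the scalar function $\|f(\cdot)\|$ is then Bohr $|c|$-almost periodic, hence Bohr almost periodic, and so $M:=\sup_{{\bf t}\in {\mathbb R}^{n}}\|f({\bf t})\|<\infty$ because almost periodic functions on ${\mathbb R}^{n}$ are bounded.

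With $M<\infty$ and $c_{R}:=\int_{(0,\infty)^{n}}\|R(\sigma)\|\, d\sigma<\infty$ available, well-definedness is immediate: the map $\sigma\mapsto R(\sigma)f({\bf t}-\sigma)$ is continuous on $(0,\infty)^{n}$ — using strong continuity of $R(\cdot)$, continuity of $f(\cdot)$, and local boundedness of $\|R(\cdot)\|$ obtained from the uniform boundedness principle — hence Bochner measurable, and it is dominated in norm by the integrable function $M\|R(\sigma)\|$; therefore the Bochner integral $F({\bf t})$ exists for every ${\bf t}\in {\mathbb R}^{n}$, and $\|F({\bf t})\|_{Y}\leq Mc_{R}$. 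Continuity of $F(\cdot)$ follows from the dominated convergence theorem: whenever ${\bf t}_{k}\to {\bf t}$, one has $R(\sigma)f({\bf t}_{k}-\sigma)\to R(\sigma)f({\bf t}-\sigma)$ for each fixed $\sigma$, with the same integrable majorant $M\|R(\sigma)\|$.

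For the $c$-almost periodicity, I would fix $\epsilon>0$ and use the $c$-almost periodicity of $f(\cdot)$ to find $l>0$ such that for each ${\bf t}_{0}\in {\mathbb R}^{n}$ there exists $\tau\in B({\bf t}_{0},l)$ with $\|f({\bf t}+\tau)-cf({\bf t})\|\leq \epsilon/(1+c_{R})$ for all ${\bf t}\in {\mathbb R}^{n}$. Then, for every ${\bf t}\in {\mathbb R}^{n}$,
$$
\bigl\|F({\bf t}+\tau)-cF({\bf t})\bigr\|_{Y}\leq \int_{(0,\infty)^{n}}\|R(\sigma)\|\cdot\bigl\|f({\bf t}+\tau-\sigma)-cf({\bf t}-\sigma)\bigr\|\, d\sigma\leq \frac{\epsilon\, c_{R}}{1+c_{R}}\leq \epsilon,
$$
which is precisely the defining estimate for Bohr $c$-almost periodicity of $F(\cdot)$ on ${\mathbb R}^{n}$ (the factor $1+c_{R}$ being used only to cover the trivial case $R\equiv 0$). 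I do not anticipate any genuine obstacle: the core estimate is a verbatim adaptation of Proposition~\ref{convdiaggacece}, and the only steps demanding (routine) care are the reduction to a convolution with the accompanying Fubini bookkeeping and the invocation of the boundedness of $f(\cdot)$.
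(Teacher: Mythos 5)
Your proof is correct, and it follows exactly the route the paper intends: the paper omits a proof of Proposition \ref{krucijaceq} (deferring to the $c=1$ case in the cited reference), and the natural argument is precisely the convolution estimate of Proposition \ref{convdiaggacece} after the substitution $\sigma={\bf t}-{\bf s}$. Your preliminary reduction to boundedness of $f$ via Corollary \ref{prcko-instrukt} (forcing $|c|=1$ when $f\not\equiv 0$) together with the almost periodicity of $\|f(\cdot)\|$ is a sound way to secure the integrable majorant $M\|R(\sigma)\|$, and the remaining steps (local boundedness of $\|R(\cdot)\|$ from uniform boundedness, dominated convergence for continuity, and the final $\epsilon$-estimate) are all in order.
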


Suppose now that $|c|=1.$ Concerning the assertion of \cite[Theorem 2.24]{c1}, we will first observe that any almost periodic function $F\in AP_{{\mathbb R}^{n} \setminus \{0\}}({\mathbb R}^{n} : X)$ can be uniformly approximated by trigonometric polynomials whose frequencies belong to the set
${\mathbb R}^{n} \setminus \{0\}.$ If we denote by $AP_{c,0}({\mathbb R}^{n} : X)$ the linear span of all $c$-almost periodic functions $F : {\mathbb R}^{n} \rightarrow X$ and by ${\bf AP}_{c,0}({\mathbb R}^{n} : X)$ its closure in $AP({\mathbb R}^{n} : X),$
then it follows from the above and our conclusion established in Example \ref{pajos-langos} that $AP_{{\mathbb R}^{n} \setminus \{0\}}({\mathbb R}^{n} : X) \subseteq {\bf AP}_{c,0}({\mathbb R}^{n} : X).$ But, it is not clear how to prove or disprove the converse inclusion provided that arg$(c)\in \pi \cdot {\mathbb Q}.$

Before we move ourselves to the next subsection, we will state and prove a composition theorem for multi-dimensional Bohr $({\mathcal B},c)$-almost periodic type functions. Suppose that $F : I \times X \rightarrow Y$ and $G : I \times Y \rightarrow Z$ are given functions; then the multi-dimensional Nemytskii operator
$W : I  \times X \rightarrow Z$ is defined by
\begin{align}\label{skadar}
W({\bf t}; x):=G\bigl({\bf t} ; F({\bf t}; x)\bigr),\quad {\bf t} \in I,\ x\in X.
\end{align}
Set $R(F)\equiv \{ F({\bf t} ; x) : {\bf t} \in I, \ x\in X\}$ and suppose that there exists a finite real constant $L>0$ such that
\begin{align}\label{lajbaha}
\bigl\|G({\bf t};y)-G\bigl({\bf t};y'\bigr)\bigr\|_{Z} \leq L\bigl\| y-y'\bigr\|_{Y} ,\quad {\bf t}\in I,\ y\in R(F),\ y'\in c R(F).
\end{align}

The following result is an extension of \cite[Theorem 2.28]{c1}:

\begin{thm}\label{episkop-jovan}
Suppose that the functions $F : I \times X \rightarrow Y$ and $G : I \times Y \rightarrow Z$ are continuous as well as $\emptyset \neq I'\subseteq I \subseteq {\mathbb R}^{n}$ and \eqref{lajbaha} holds.
\begin{itemize}
\item[(i)]
Suppose further that, for every $B\in {\mathcal B}$ and $\epsilon>0,$
there exists $l>0$ such that for each ${\bf t}_{0} \in I'$ there exists ${\bf \tau} \in B({\bf t}_{0},l) \cap I'$ such that \eqref{ujshe} holds and
\begin{align}\label{ujshe1}
\bigl\|G({\bf t}+{\bf \tau};cy)-c F({\bf t};y)\bigr\|_{Z} \leq \epsilon,\quad {\bf t}\in I,\ y\in R(F).
\end{align}
Then the function $W(\cdot;\cdot),$ given by \eqref{skadar}, is Bohr $({\mathcal B},I',c)$-almost periodic.
\item[(ii)] 
Suppose further that, for every $B\in {\mathcal B}$, 
there exists a sequence $({\bf \tau}_{k})$ in $I'$ such that $\lim_{k\rightarrow +\infty} |{\bf \tau}_{k}|=+\infty,$ 
\eqref{ujshe-rec} holds and
\begin{align}\label{lajbaha-ujs}
\lim_{k\rightarrow +\infty}\sup_{{\bf t}\in I;x\in B} \bigl\|G\bigl({\bf t}+{\bf \tau}_{k};cF({\bf t}; x)\bigr)-cG({\bf t};F({\bf t}; x))\bigr\|_{Y} =0.
\end{align}
Then the function $W(\cdot;\cdot),$ given by \eqref{skadar}, is $({\mathcal B},I',c)$-uniformly recurrent.
\end{itemize}
\end{thm}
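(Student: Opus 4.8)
The plan is to prove both parts by the triangle-inequality argument that is standard for composition theorems, using the Lipschitz hypothesis \eqref{lajbaha} to transfer the almost periodicity (resp. uniform recurrence) of $F$ through $G$, and then using \eqref{ujshe1} (resp. \eqref{lajbaha-ujs}) to handle the dependence of $G$ on its first variable. For part (i), fix $B \in {\mathcal B}$ and $\epsilon > 0$. Apply the hypothesis to obtain $l>0$ such that for each ${\bf t}_0 \in I'$ there is ${\bf \tau} \in B({\bf t}_0,l) \cap I'$ for which both \eqref{ujshe} holds (with the chosen $B$ and $\epsilon$) and \eqref{ujshe1} holds. The key estimate is, for ${\bf t} \in I$ and $x \in B$,
\begin{align*}
\bigl\|W({\bf t}+{\bf \tau};x) - cW({\bf t};x)\bigr\|_{Z}
&= \bigl\|G\bigl({\bf t}+{\bf \tau};F({\bf t}+{\bf \tau};x)\bigr) - cG\bigl({\bf t};F({\bf t};x)\bigr)\bigr\|_{Z}
\\& \leq \bigl\|G\bigl({\bf t}+{\bf \tau};F({\bf t}+{\bf \tau};x)\bigr) - G\bigl({\bf t}+{\bf \tau};cF({\bf t};x)\bigr)\bigr\|_{Z}
\\& \quad + \bigl\|G\bigl({\bf t}+{\bf \tau};cF({\bf t};x)\bigr) - cG\bigl({\bf t};F({\bf t};x)\bigr)\bigr\|_{Z}.
\end{align*}
The first term is bounded, via \eqref{lajbaha} (note $F({\bf t}+{\bf \tau};x) \in R(F)$ and $cF({\bf t};x) \in cR(F)$), by $L\bigl\|F({\bf t}+{\bf \tau};x)-cF({\bf t};x)\bigr\|_{Y} \leq L\epsilon$; the second term is bounded by $\epsilon$ by \eqref{ujshe1} applied with $y = F({\bf t};x) \in R(F)$. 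Hence $\bigl\|W({\bf t}+{\bf \tau};x) - cW({\bf t};x)\bigr\|_{Z} \leq (L+1)\epsilon$, and since $L$ is fixed and $\epsilon$ arbitrary this gives Bohr $({\mathcal B},I',c)$-almost periodicity of $W$; continuity of $W$ is immediate from continuity of $F$ and $G$. One should also check at the outset that $I+I' \subseteq I$ is inherited (it is part of the standing hypothesis via $\emptyset \neq I' \subseteq I$ and the setup of Definition \ref{nafaks123456789012345}), so the notion is well posed.

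For part (ii), the argument is entirely parallel. Fix $B \in {\mathcal B}$ and take the sequence $({\bf \tau}_k)$ in $I'$ with $|{\bf \tau}_k| \to +\infty$ furnished by the hypothesis, along which both \eqref{ujshe-rec} and \eqref{lajbaha-ujs} hold. For ${\bf t} \in I$ and $x \in B$, split
\begin{align*}
\bigl\|W({\bf t}+{\bf \tau}_k;x) - cW({\bf t};x)\bigr\|_{Z}
&\leq L\bigl\|F({\bf t}+{\bf \tau}_k;x) - cF({\bf t};x)\bigr\|_{Y}
\\& \quad + \bigl\|G\bigl({\bf t}+{\bf \tau}_k;cF({\bf t};x)\bigr) - cG\bigl({\bf t};F({\bf t};x)\bigr)\bigr\|_{Z},
\end{align*}
take the supremum over ${\bf t} \in I$, $x \in B$, and let $k \to +\infty$: the first supremum tends to $0$ by \eqref{ujshe-rec}, the second by \eqref{lajbaha-ujs}, so $W$ is $({\mathcal B},I',c)$-uniformly recurrent.

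The only genuinely delicate point — and the step I expect to need the most care — is bookkeeping with the range sets in the Lipschitz condition \eqref{lajbaha}: one must be sure that in the first summand the two arguments of $G$ genuinely lie in $R(F)$ and $cR(F)$ respectively (so that \eqref{lajbaha} applies verbatim), and that $R(F)$ here means the full range $\{F({\bf t};x) : {\bf t}\in I,\, x\in X\}$ as defined before \eqref{lajbaha}, which contains all the points $F({\bf t}+{\bf \tau};x)$ and $F({\bf t};x)$ that arise for ${\bf t}\in I$, $x\in B$. Everything else is a routine triangle-inequality estimate; no compactness or approximation arguments are needed, in contrast to classical composition theorems where a modulus-of-continuity argument is required — here the global Lipschitz hypothesis \eqref{lajbaha} does all the work, which is exactly the simplification over \cite[Theorem 2.28]{c1} being claimed.
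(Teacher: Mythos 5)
Your proof is correct and uses exactly the same triangle-inequality decomposition (insert $G({\bf t}+\tau; cF({\bf t};x))$ as the intermediate term, bound one piece by the Lipschitz condition \eqref{lajbaha} and the other by \eqref{ujshe1}, resp. \eqref{lajbaha-ujs}) as the paper, which proves only (i) and leaves (ii) as the parallel argument you spell out. Your tacit reading of \eqref{ujshe1} with $cG({\bf t};y)$ in place of the misprinted $cF({\bf t};y)$ is the intended one, as the paper's own computation confirms.
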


\begin{proof}
We will prove only (i). The continuity of function $W(\cdot;\cdot)$ is obvious.
Then the final conclusion follows from the assumption made, the corresponding definition of Bohr $({\mathcal B},I',c)$-almost periodicity and the next simple computation:
\begin{align*}
\bigl\| G\bigl({\bf t}&+\tau ; F({\bf t}+\tau; x)\bigr)-G\bigl({\bf t} ; F({\bf t}; x)\bigr)\bigr\|_{Z} 
\\& \leq \bigl\| G\bigl({\bf t}+\tau ; F({\bf t}+\tau; x)\bigr)-G\bigl({\bf t} +\tau; cF({\bf t}; x)\bigr)\bigr\|_{Z} 
\\& +\bigl\| G\bigl({\bf t}+\tau ; cF({\bf t}; x)\bigr)-cG\bigl({\bf t} ; F({\bf t}; x)\bigr)\bigr\|_{Z}
\\& \leq L\bigl\| F({\bf t}+\tau; x)-cF({\bf t}; x)\bigr\|_{Y} +\bigl\| G\bigl({\bf t}+\tau ; cF({\bf t}; x)\bigr)-cG\bigl({\bf t} ; F({\bf t}; x)\bigr)\bigr\|_{Z},
\end{align*}
for any ${\bf t}\in I,$ $\tau \in I'$ and $x\in X.$
\end{proof}

\subsection{${\mathbb D}$-asymptotically $({\mathcal B},c)$-almost periodic type functions}\label{gade-negadece}

In \cite{marko-manuel-ap}, we have recently introduced the following notion:

\begin{defn}\label{kompleks12345}
Suppose that 
${\mathbb D} \subseteq I \subseteq {\mathbb R}^{n}$ and the set ${\mathbb D}$  is unbounded. By $C_{0,{\mathbb D},{\mathcal B}}(I \times X :Y)$ we denote the vector space consisting of all continuous functions $Q : I \times X \rightarrow Y$ such that, for every $B\in {\mathcal B},$ we have $\lim_{t\in {\mathbb D},|t|\rightarrow +\infty}Q({\bf t};x)=0,$ uniformly for $x\in B.$\index{space!$C_{0,{\mathbb D}}(I \times X :Y)$}
\end{defn}

Now we are ready to introduce the following notion:

\begin{defn}\label{braindamage12345}
Suppose that the set ${\mathbb D} \subseteq I \subseteq {\mathbb R}^{n}$ is unbounded, $\emptyset \neq I'\subseteq I \subseteq {\mathbb R}^{n}$ and
$F : I \times X \rightarrow Y$ is a continuous function. Then we say that $F(\cdot ;\cdot)$ is 
(strongly) ${\mathbb D}$-asymptotically Bohr $({\mathcal B},I',c)$-almost periodic, resp. (strongly) ${\mathbb D}$-asymptotically $({\mathcal B},I',c)$-uniformly recurrent,
if and only if there exist a Bohr $({\mathcal B},I',c)$-almost periodic function ($G : {\mathbb R}^{n} \times X \rightarrow Y$) $G : I \times X \rightarrow Y$, resp. a $({\mathcal B},I',c)$-uniformly recurrent function ($G : {\mathbb R}^{n} \times X \rightarrow Y$)
$G : I \times X \rightarrow Y$
and a function
$Q\in C_{0,{\mathbb D},{\mathcal B}}(I\times X :Y)$ such that
$F({\bf t} ; x)=G({\bf t} ; x)+Q({\bf t} ; x)$ for all ${\bf t}\in I$ and $x\in X.$
If $I'=I,$ then we also say that $F(\cdot ;\cdot)$ is 
(strongly) ${\mathbb D}$-asymptotically Bohr $({\mathcal B},c)$-almost periodic, resp. (strongly) ${\mathbb D}$-asymptotically $({\mathcal B},c)$-uniformly recurrent; if $X\in {\mathcal B},$ then we omit the term ${\mathcal B}$
from the notation introduced, with the meaning clear.
\end{defn}

Before we go any further, we would like to present the following extension of \cite[Theorem 2.29]{c1}:

\begin{thm}\label{episkop-jovanas}
Suppose that the functions $F_{h} : I \times X \rightarrow Y,$ $F_{0} : I \times X \rightarrow Y,$ $G_{h} : I \times Y \rightarrow Z$and $G_{0} : I \times Y \rightarrow Z$ are continuous, $F=F_{h}+F_{0},$ $G=G_{h}+G_{0}$ as well as $\emptyset \neq I'\subseteq I \subseteq {\mathbb R}^{n}$ and \eqref{lajbaha} holds with the functions $F(\cdot ;\cdot)$ and $G(\cdot ;\cdot)$ replaced therein with the functions $F_{h}(\cdot ;\cdot)$ and $G_{h}(\cdot;\cdot),$ respectively.
\begin{itemize}
\item[(i)]
Suppose further that, for every $B\in {\mathcal B}$ and $\epsilon>0,$
there exists $l>0$ such that for each ${\bf t}_{0} \in I'$ there exists ${\bf \tau} \in B({\bf t}_{0},l) \cap I'$ such that \eqref{ujshe} holds with the function $F(\cdot ;\cdot)$ replaced with the function $F_{h}(\cdot;\cdot)$ and  \eqref{ujshe1} holds with the functions $F(\cdot ;\cdot)$ and $G(\cdot ;\cdot)$ replaced therein with the functions $F_{h}(\cdot ;\cdot)$ and $G_{h}(\cdot;\cdot),$ respectively.
If $F_{0}\in C_{0,{\mathbb D},{\mathcal B}}(I \times X :Y)$ and for each $B\in {\mathcal B}$ we have
$
\lim_{t\in {\mathbb D},|t|\rightarrow +\infty}G_{0}({\bf t};F({\bf t} ;x))=0, 
$
uniformly for $x\in B,$
then the function $W(\cdot;\cdot),$ given by \eqref{skadar}, is 
${\mathbb D}$-asymptotically Bohr $({\mathcal B},I',c)$-almost periodic.
\item[(ii)] 
Suppose further that, for every $B\in {\mathcal B}$, 
there exists a sequence $({\bf \tau}_{k})$ in $I'$ such that $\lim_{k\rightarrow +\infty} |{\bf \tau}_{k}|=+\infty,$ 
\eqref{ujshe-rec} holds and \eqref{lajbaha-ujs} holds with the functions $F(\cdot ;\cdot)$ and $G(\cdot ;\cdot)$ replaced therein with the functions $F_{h}(\cdot ;\cdot)$ and $G_{h}(\cdot;\cdot),$ respectively. If $F_{0}\in C_{0,{\mathbb D},{\mathcal B}}(I \times X :Y)$ and for each $B\in {\mathcal B}$ we have
$
\lim_{t\in {\mathbb D},|t|\rightarrow +\infty}G_{0}({\bf t};F({\bf t} ;x))=0, 
$
uniformly for $x\in B,$
then the function $W(\cdot;\cdot),$ given by \eqref{skadar}, is $({\mathcal B},I',c)$-uniformly recurrent.
\end{itemize}
\end{thm}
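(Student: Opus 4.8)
The plan is to reduce Theorem \ref{episkop-jovanas} to the composition result already established in Theorem \ref{episkop-jovan}, exploiting the additive splittings $F=F_h+F_0$ and $G=G_h+G_0$. First I would observe that we can write the Nemytskii operator as
\begin{align*}
W({\bf t};x)&=G\bigl({\bf t};F({\bf t};x)\bigr)=G_h\bigl({\bf t};F_h({\bf t};x)\bigr)\\
&\quad +\Bigl[G_h\bigl({\bf t};F({\bf t};x)\bigr)-G_h\bigl({\bf t};F_h({\bf t};x)\bigr)\Bigr]+G_0\bigl({\bf t};F({\bf t};x)\bigr).
\end{align*}
The first summand is exactly the composition $W_h({\bf t};x):=G_h({\bf t};F_h({\bf t};x))$ of the two ``homogeneous'' parts; the remaining two summands I want to show belong to $C_{0,{\mathbb D},{\mathcal B}}(I\times X:Y)$ (respectively, $Z$). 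Granting that, in case (i) the hypotheses on $F_h$ and $G_h$ are precisely \eqref{ujshe} and \eqref{ujshe1} with $F$ and $G$ replaced by $F_h$ and $G_h$, so Theorem \ref{episkop-jovan}(i) applies verbatim to give that $W_h(\cdot;\cdot)$ is Bohr $({\mathcal B},I',c)$-almost periodic; adding the $C_{0,{\mathbb D},{\mathcal B}}$-remainder then yields, by Definition \ref{braindamage12345}, that $W(\cdot;\cdot)$ is ${\mathbb D}$-asymptotically Bohr $({\mathcal B},I',c)$-almost periodic. Case (ii) is identical with Theorem \ref{episkop-jovan}(ii) in place of (i).

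The two steps needing actual verification are the claimed $C_{0,{\mathbb D},{\mathcal B}}$-membership of $G_0({\bf t};F({\bf t};x))$ and of the difference $G_h({\bf t};F({\bf t};x))-G_h({\bf t};F_h({\bf t};x))$. The former is immediate from the standing hypothesis: for every $B\in{\mathcal B}$ we are told $\lim_{{\bf t}\in{\mathbb D},|{\bf t}|\to+\infty}G_0({\bf t};F({\bf t};x))=0$ uniformly for $x\in B$, and continuity of this composite function is clear, so it lies in $C_{0,{\mathbb D},{\mathcal B}}(I\times X:Z)$ by Definition \ref{kompleks12345}. For the difference, I would invoke the Lipschitz-type bound \eqref{lajbaha} as it holds for $G_h$: for ${\bf t}\in I$, since $F({\bf t};x)=F_h({\bf t};x)+F_0({\bf t};x)$ with $F_0({\bf t};x)\in C_{0,{\mathbb D},{\mathcal B}}$, I first need $F_h({\bf t};x)\in R(F_h)$ (clear) and $F({\bf t};x)=F_h({\bf t};x)+F_0({\bf t};x)$; to fit the frame of \eqref{lajbaha} one wants $F({\bf t};x)$ in the allowed range of the second argument. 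A cleaner route is to use the hypothesis $F_0\in C_{0,{\mathbb D},{\mathcal B}}(I\times X:Y)$ directly: by \eqref{lajbaha} for $G_h$ applied with $y=F({\bf t};x)$, $y'=F_h({\bf t};x)$ (noting both lie in $\overline{R(F_h)}+C_{0,{\mathbb D},{\mathcal B}}$-reachable sets, which is where \eqref{lajbaha} is assumed to hold — or, more safely, one extends \eqref{lajbaha} to a neighborhood, which is the intended reading in \cite{c1}),
\[
\bigl\|G_h\bigl({\bf t};F({\bf t};x)\bigr)-G_h\bigl({\bf t};F_h({\bf t};x)\bigr)\bigr\|_Z\le L\bigl\|F_0({\bf t};x)\bigr\|_Y,
\]
and the right-hand side tends to $0$ as ${\bf t}\in{\mathbb D}$, $|{\bf t}|\to+\infty$, uniformly for $x\in B$, since $F_0\in C_{0,{\mathbb D},{\mathcal B}}(I\times X:Y)$. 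Continuity of the difference being obvious, it lies in $C_{0,{\mathbb D},{\mathcal B}}(I\times X:Z)$.

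Having both correction terms in $C_{0,{\mathbb D},{\mathcal B}}(I\times X:Z)$ and $W_h$ in the appropriate Bohr/recurrent class from Theorem \ref{episkop-jovan}, their sum $W$ is of the form (Bohr $({\mathcal B},I',c)$-almost periodic, resp. $({\mathcal B},I',c)$-uniformly recurrent) $+\,C_{0,{\mathbb D},{\mathcal B}}$, which is exactly the definition of ${\mathbb D}$-asymptotic Bohr $({\mathcal B},I',c)$-almost periodicity (resp. ${\mathbb D}$-asymptotic $({\mathcal B},I',c)$-uniform recurrence). Here I use that $C_{0,{\mathbb D},{\mathcal B}}(I\times X:Z)$ is a vector space (Definition \ref{kompleks12345}), so the two correction terms may be absorbed into a single $Q$. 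I expect the main obstacle to be the bookkeeping around \eqref{lajbaha}: one must be careful that the second argument of $G_h$ in the difference term ranges over a set on which the Lipschitz estimate is valid — the cleanest fix, which I would adopt, is to read \eqref{lajbaha} (as in \cite{c1,marko-manuel-ap}) as holding on $R(F_h)\cup cR(F_h)$ together with all points within $C_{0,{\mathbb D},{\mathcal B}}$-perturbations thereof, or simply to note that for the asymptotic conclusion only the behavior for large $|{\bf t}|$ matters, where $F_0({\bf t};x)$ is small and $F({\bf t};x)$ is correspondingly close to $R(F_h)$. Everything else is a routine assembly of the two lemmas and Definition \ref{braindamage12345}. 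The recurrence case (ii) runs mutatis mutandis, replacing \eqref{ujshe}, \eqref{ujshe1} by \eqref{ujshe-rec}, \eqref{lajbaha-ujs} and Theorem \ref{episkop-jovan}(i) by Theorem \ref{episkop-jovan}(ii).
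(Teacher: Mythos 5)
Your proposal is correct and follows essentially the same route as the paper: the identical three-term decomposition $W=G_h(\cdot;F_h(\cdot;\cdot))+[G_h(\cdot;F(\cdot;\cdot))-G_h(\cdot;F_h(\cdot;\cdot))]+G_0(\cdot;F(\cdot;\cdot))$, Theorem \ref{episkop-jovan} applied to the first term, the standing hypothesis for the third, and the Lipschitz bound \eqref{lajbaha} for $G_h$ together with $F_0\in C_{0,{\mathbb D},{\mathcal B}}(I\times X:Y)$ for the middle term. Your extra care about the set on which \eqref{lajbaha} is valid is a reasonable refinement of a point the paper passes over silently, but it does not change the argument.
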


\begin{proof}
We will outline all details of the proof of (i) for the sake of completeness.
Clearly, the following decomposition holds true:
\begin{align*}
G(\cdot; F(\cdot;\cdot))=G_{h}\bigl(\cdot; F_{h}(\cdot;\cdot)\bigr)+\Bigl[G_{h}(\cdot; F(\cdot;\cdot))-G_{h}\bigl(\cdot; F_{h}(\cdot;\cdot)\bigr)\Bigr] +G_{0}(\cdot; F(\cdot;\cdot)).
\end{align*}
Due to Theorem \ref{episkop-jovan}, we have that the function $G_{h}(\cdot; F_{h}(\cdot;\cdot))$ is Bohr $({\mathcal B},I',c)$-almost periodic. Furthermore, the prescribed assumption implies that the function $G_{0}(\cdot; F(\cdot;\cdot))$ belongs to the space $C_{0,{\mathbb D},{\mathcal B}}(I \times X :Y).$ This also holds for the function $G_{h}(\cdot; F(\cdot;\cdot))-G_{h}(\cdot; F_{h}(\cdot;\cdot))$
since the function $G_{h}(\cdot;\cdot)$ satisfies the Lipschitz condition with respect to the first variable and $F_{0}\in C_{0,{\mathbb D},{\mathcal B}}(I \times X :Y).$
\end{proof}

Set, for brevity, $I_{{\bf t}}:=(-\infty,t_{1}] \times (-\infty,t_{2}]\times \cdot \cdot \cdot \times (-\infty,t_{n}]$ and 
${\mathbb D}_{{\bf t}}:=I_{{\bf t}} \cap {\mathbb D}$
for any ${\bf t}=(t_{1},t_{2},\cdot \cdot \cdot, t_{n})\in {\mathbb R}^{n}.$  Concerning the convolution invariance of strong ${\mathbb D}$-asymptotical $c$-almost periodicity under the actions of finite convolution products, we will formulate the following result (the proof is similar to the proof of corresponding result from \cite{marko-manuel-ap} and therefore omitted):

\begin{prop}\label{idio-multcq}
Suppose that $(R({\bf t}))_{{\bf t}> {\bf 0}}\subseteq L(X,Y)$ is a strongly continuous operator family such that
$\int_{(0,\infty)^{n}}\|R({\bf t} )\|\, d{\bf t}<\infty .$ If $f : I \rightarrow X$ is strongly ${\mathbb D}$-asymptotically $c$-almost periodic,
\begin{align*}
\lim_{|{\bf t}|\rightarrow \infty,  {\bf t} \in {\mathbb D}}\int_{I_{{\bf t}}\cap {\mathbb D}^{c}}\| R({\bf t}-{\bf s})\|\, d{\bf s}=0
\end{align*}
and for each $r>0$ we have
\begin{align*}
\lim_{|{\bf t}|\rightarrow \infty, {\bf t} \in {\mathbb D}}\int_{{\mathbb D}_{{\bf t}}\cap B(0,r)}\| R({\bf t}-{\bf s})\|\, d{\bf s}=0,
\end{align*}
then the function 
\begin{align*}
F({\bf t}):=\int_{{\mathbb D}_{{\bf t}}}R({\bf t}-{\bf s})f({\bf s})\, ds,\quad {\bf t}\in I
\end{align*}
is strongly ${\mathbb D}$-asymptotically $c$-almost periodic.
\end{prop}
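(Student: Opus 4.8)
The plan is to split $f$ into its almost periodic part and its remainder, run the finite convolution through the split, and recognize the resulting pieces. Write $f = g + q$, where $g : {\mathbb R}^{n} \rightarrow X$ is $c$-almost periodic and $q \in C_{0,{\mathbb D}}(I : X)$; in particular $g$ is bounded, say $M := \sup_{{\bf s}\in {\mathbb R}^{n}}\|g({\bf s})\| < \infty$. Define $G_{1} : {\mathbb R}^{n} \rightarrow Y$ by $G_{1}({\bf t}) := \int^{t_{1}}_{-\infty}\cdots\int^{t_{n}}_{-\infty} R({\bf t}-{\bf s})g({\bf s})\, d{\bf s}$; by Proposition \ref{krucijaceq} this function is well-defined and ($c$-almost periodic) on all of ${\mathbb R}^{n}$. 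I would then propose the decomposition $F = G_{1}|_{I} + Q$, where $Q := F - G_{1}|_{I}$, and the whole task reduces to showing $Q \in C_{0,{\mathbb D}}(I:Y)$; together with the continuity of $F$ (which, as in \cite{marko-manuel-ap}, follows from the strong continuity of $(R({\bf t}))_{{\bf t}>{\bf 0}}$, the integrability hypothesis, and the dominated convergence theorem) this exhibits $F$ as strongly ${\mathbb D}$-asymptotically $c$-almost periodic.

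For the key identification, note that since ${\mathbb D}_{{\bf t}} = I_{{\bf t}} \cap {\mathbb D}$, decomposing $I_{{\bf t}}$ into its intersections with ${\mathbb D}$ and ${\mathbb D}^{c}$ and inserting $f = g + q$ gives, for every ${\bf t} \in I$,
\begin{align*}
Q({\bf t}) = \int_{{\mathbb D}_{{\bf t}}} R({\bf t}-{\bf s}) q({\bf s})\, d{\bf s} - \int_{I_{{\bf t}} \cap {\mathbb D}^{c}} R({\bf t}-{\bf s}) g({\bf s})\, d{\bf s}, \quad {\bf t}\in I.
\end{align*}
The second term is bounded in $Y$-norm by $M \int_{I_{{\bf t}} \cap {\mathbb D}^{c}} \|R({\bf t}-{\bf s})\|\, d{\bf s}$, which tends to $0$ as $|{\bf t}| \rightarrow \infty$, ${\bf t} \in {\mathbb D}$, precisely by the first displayed hypothesis of the statement. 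Hence it only remains to control the first term.

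For that, fix $\epsilon > 0$ and choose $r > 0$ with $\|q({\bf s})\| \leq \epsilon$ for all ${\bf s} \in {\mathbb D}$, $|{\bf s}| \geq r$ (possible since $q \in C_{0,{\mathbb D}}(I:X)$); also $q$ is bounded on ${\mathbb D}$, say by $M_{q}$. Split $\int_{{\mathbb D}_{{\bf t}}} R({\bf t}-{\bf s}) q({\bf s})\, d{\bf s}$ over ${\mathbb D}_{{\bf t}} \cap B(0,r)$ and ${\mathbb D}_{{\bf t}} \setminus B(0,r)$. On the first region the $Y$-norm is $\leq M_{q} \int_{{\mathbb D}_{{\bf t}} \cap B(0,r)} \|R({\bf t}-{\bf s})\|\, d{\bf s}$, which tends to $0$ as $|{\bf t}| \rightarrow \infty$, ${\bf t} \in {\mathbb D}$, by the second displayed hypothesis applied with this fixed $r$; on the second region, the change of variables ${\bf u} = {\bf t} - {\bf s}$ (legitimate since ${\bf s} \leq {\bf t}$ on $I_{{\bf t}}$, so ${\bf u}$ sweeps $[0,\infty)^{n}$) gives the bound $\epsilon \int_{(0,\infty)^{n}} \|R({\bf u})\|\, d{\bf u}$. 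Therefore $\limsup_{|{\bf t}|\rightarrow\infty,\, {\bf t}\in{\mathbb D}} \|Q({\bf t})\|_{Y} \leq \epsilon \int_{(0,\infty)^{n}}\|R({\bf u})\|\, d{\bf u}$, and letting $\epsilon \downarrow 0$ finishes the argument.

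The main obstacle is really the bookkeeping in the second step: producing the exact formula for $Q$ and checking that the "bad" remainder $\int_{I_{{\bf t}}\cap{\mathbb D}^{c}} R({\bf t}-{\bf s}) g({\bf s})\, d{\bf s}$ is exactly the term annihilated by the first hypothesis, while the localization-at-the-origin term is exactly the one handled by the second hypothesis. Everything else is routine once one invokes the boundedness of $c$-almost periodic functions and Proposition \ref{krucijaceq}; the only other mildly delicate point is the continuity of $F$ in the presence of the moving integration domain ${\mathbb D}_{{\bf t}}$, which is handled as in \cite{marko-manuel-ap} and is why the authors omit it.
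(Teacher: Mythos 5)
Your proof is correct and follows exactly the route the paper intends: the paper omits the argument, referring to the analogous result for $c=1$ in \cite{marko-manuel-ap}, and the decomposition $F=G_{1}|_{I}+Q$ with $G_{1}$ handled by Proposition \ref{krucijaceq} and the two error terms $\int_{I_{{\bf t}}\cap {\mathbb D}^{c}}R({\bf t}-{\bf s})g({\bf s})\,d{\bf s}$ and $\int_{{\mathbb D}_{{\bf t}}}R({\bf t}-{\bf s})q({\bf s})\,d{\bf s}$ matched to the two displayed hypotheses is precisely that argument. Nothing further is needed.
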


Assuming that ${\mathbb D}=[\alpha_{1},\infty) \times [\alpha_{2},\infty) \times \cdot \cdot \cdot \times [\alpha_{n},\infty)$ for some real numbers $\alpha_{1},\ \alpha_{2},\cdot \cdot \cdot,\ \alpha_{n},$ then ${\mathbb D}_{{\bf t}}=[\alpha_{1},t_{1}]\times [\alpha_{2},t_{2}] \times \cdot \cdot \cdot \times [\alpha_{n},t_{n}].$ In this case, 
the function
$
F({\bf t})=\int^{{\bf \alpha}}_{{\bf t}}R({\bf t}-{\bf s})f({\bf s})\, ds,$ $ {\bf t}\in I
$ is strongly ${\mathbb D}$-asymptotically $c$-almost periodic, where we accept the notation
$$
\int^{{\bf \alpha}}_{{\bf t}}\cdot =\int_{\alpha_{1}}^{t_{1}}\int_{\alpha_{2}}^{t_{2}}\cdot \cdot \cdot \int_{\alpha_{n}}^{t_{n}}.
$$

Although clarified, we feel it is our duty to emphasize that our results concerning the invariance of multi-dimensional $c$-almost periodicity are not so easily applicable as the corresponding results known in the one-dimensional case, unfortunately. This is a very unexplored theme which will be further analyzed somewhere else.

Let
$F(\cdot;\cdot)$ be $I$-asymptotically $c$-uniformly recurrent, $G : I \times X \rightarrow Y$, 
$Q\in C_{0,I,{\mathcal B}}(I\times X :Y)$ and
$F({\bf t} ; x)=G({\bf t} ; x)+Q({\bf t} ; x)$ for all ${\bf t}\in I$ and $x\in X.$ Then, for every $x\in X$, we have
\begin{align*}
\overline{\bigl\{G({\bf t};x) : {\bf t}\in I,\ x\in X \bigr\}} \subseteq  \overline{\bigl\{F({\bf t};x) : {\bf t}\in I,\ x\in X\bigr\}}.
\end{align*}

The following proposition can be deduced as in the case that $c=1:$

\begin{prop}\label{mismodranini}
\begin{itemize}
\item[(i)]
Suppose that for each integer $j\in {\mathbb N}$ the function $F_{j}(\cdot ; \cdot)$ is  Bohr $({\mathcal B},c)$-almost periodic ($({\mathcal B},c)$-uniformly recurrent). If for each $B\in {\mathcal B}$ there exists $\epsilon_{B}>0$ such that
the sequence $(F_{j}(\cdot ;\cdot))$ converges uniformly to a function $F(\cdot ;\cdot)$ on the set $B^{\circ} \cup \bigcup_{x\in \partial B}B(x,\epsilon_{B}),$ then the function $F(\cdot ;\cdot)$ is Bohr $({\mathcal B},c)$-almost periodic ($({\mathcal B},c)$-uniformly recurrent).
\item[(ii)] Suppose that for each integer $j\in {\mathbb N}$ the function $F_{j}(\cdot ; \cdot)$ is $I$-asymptotically  Bohr $({\mathcal B},c)$-almost periodic ($I$-asymptotically $({\mathcal B},c)$-uniformly recurrent). If for each $B\in {\mathcal B}$ there exists $\epsilon_{B}>0$ such that
the sequence $(F_{j}(\cdot ;\cdot))$ converges uniformly to a function $F(\cdot ;\cdot)$ on the set $B^{\circ} \cup \bigcup_{x\in \partial B}B(x,\epsilon_{B}),$ then the function $F(\cdot ;\cdot)$ is $I$-asymptotically Bohr $({\mathcal B},c)$-almost periodic ($I$-asymptotically $({\mathcal B},c)$-uniformly recurrent).
\end{itemize}
\end{prop}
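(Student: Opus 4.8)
The plan is to reduce everything to the definitions and exploit the uniform convergence on the slightly enlarged sets $B^{\circ}\cup\bigcup_{x\in\partial B}B(x,\epsilon_{B})$, which contain $B$ itself, so that the limit function $F(\cdot;\cdot)$ is continuous (being a uniform limit of continuous functions on a neighbourhood of each $B$, and since $\mathcal B_{X}=X$ this gives continuity on all of $I\times X$). For part (i), fix $B\in\mathcal B$ and $\epsilon>0$. First I would use the uniform convergence to pick $j_{0}\in\mathbb N$ with $\sup_{{\bf t}\in I,\ x\in B}\|F_{j_{0}}({\bf t};x)-F({\bf t};x)\|_{Y}\le\epsilon/(2+|c|)$; note this is exactly the reason the enlargement of $B$ is harmless — we only need the estimate on $B$ here, but the enlargement is what guarantees continuity of $F$ and will matter in the ${\mathbb D}$-asymptotic case in (ii). Then apply the Bohr $(\mathcal B,c)$-almost periodicity of $F_{j_{0}}$ to the pair $(B,\epsilon/(2+|c|))$ to obtain $l>0$ such that for each ${\bf t}_{0}\in I$ there is ${\bf\tau}\in B({\bf t}_{0},l)\cap I$ with $\|F_{j_{0}}({\bf t}+{\bf\tau};x)-cF_{j_{0}}({\bf t};x)\|_{Y}\le\epsilon/(2+|c|)$ for all ${\bf t}\in I$, $x\in B$. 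The triangle inequality
\begin{align*}
\bigl\|F({\bf t}+{\bf\tau};x)-cF({\bf t};x)\bigr\|_{Y}
&\le\bigl\|F({\bf t}+{\bf\tau};x)-F_{j_{0}}({\bf t}+{\bf\tau};x)\bigr\|_{Y}\\
&\quad+\bigl\|F_{j_{0}}({\bf t}+{\bf\tau};x)-cF_{j_{0}}({\bf t};x)\bigr\|_{Y}
+|c|\bigl\|F_{j_{0}}({\bf t};x)-F({\bf t};x)\bigr\|_{Y}
\end{align*}
then bounds the left side by $\epsilon$, which is the required conclusion. For the $(\mathcal B,c)$-uniformly recurrent case one argues in the same spirit: fix $B\in\mathcal B$, take for each $j$ the sequence $({\bf\tau}_{k}^{(j)})$ furnished by the recurrence of $F_{j}$, and build a diagonal-type sequence — choose $j_{k}\to\infty$ slowly enough, and for each $k$ an index $m_{k}$ with $|{\bf\tau}_{m_{k}}^{(j_{k})}|\ge k$ and $\sup_{{\bf t}\in I,\,x\in B}\|F_{j_{k}}({\bf t}+{\bf\tau}_{m_{k}}^{(j_{k})};x)-cF_{j_{k}}({\bf t};x)\|_{Y}\le 1/k$ — and combine with $\|F_{j_{k}}-F\|\le 1/k$ on $B$ to get $\sup_{{\bf t}\in I,\,x\in B}\|F({\bf t}+{\bf\tau}_{m_{k}}^{(j_{k})};x)-cF({\bf t};x)\|_{Y}\to 0$ with $|{\bf\tau}_{m_{k}}^{(j_{k})}|\to\infty$.

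For part (ii), write $F_{j}=G_{j}+Q_{j}$ with $G_{j}$ Bohr $(\mathcal B,c)$-almost periodic (resp.\ $(\mathcal B,c)$-uniformly recurrent) and $Q_{j}\in C_{0,I,\mathcal B}(I\times X:Y)$. The natural candidates for the decomposition of $F$ are $G:=\lim_{j}G_{j}$ and $Q:=\lim_{j}Q_{j}$, but one must first check these limits exist. Here is where the enlargement of $B$ enters crucially: for ${\bf t}\in I$ with $|{\bf t}|$ large, $Q_{j}({\bf t};x)$ is small uniformly on $B$, so $G_{j}({\bf t};x)-G_{i}({\bf t};x)$ is controlled by $\|F_{j}-F_{i}\|$ on $B$ plus a term that $\to 0$ as $|{\bf t}|\to\infty$; combined with the Cauchy property of $(F_{j})$ this should force $(G_{j})$ to be uniformly Cauchy on $B$ for ${\bf t}$ outside a large ball, and then — using that $G_{j}-G_{i}$ is itself Bohr $(\mathcal B,c)$-almost periodic, hence its sup over $\{|{\bf t}|\ge a\}\cap(I+I)$ controls its sup over all of $I$ via an estimate of the type \eqref{tupak12345ce} (in the recurrent subcase, literally that inequality) — uniformly Cauchy on all of $B\times I$. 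Thus $G:=\lim G_{j}$ exists uniformly on each $B$, is Bohr $(\mathcal B,c)$-almost periodic (resp.\ $(\mathcal B,c)$-uniformly recurrent) by part (i), and $Q:=F-G=\lim Q_{j}$ lies in $C_{0,I,\mathcal B}(I\times X:Y)$ because a uniform-on-$B$ limit of a uniformly-Cauchy family of functions each vanishing at infinity uniformly on $B$ again vanishes at infinity uniformly on $B$ (an $\epsilon/3$ argument). This gives the desired decomposition $F=G+Q$, so $F$ is $I$-asymptotically Bohr $(\mathcal B,c)$-almost periodic (resp.\ $I$-asymptotically $(\mathcal B,c)$-uniformly recurrent).

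The main obstacle is exactly the step just described in (ii): showing that the uniform convergence of $(F_{j})$ on (a neighbourhood of) each $B$ propagates to \emph{separate} uniform convergence of the almost-periodic parts $(G_{j})$ and the ${\mathbb D}$-decaying parts $(Q_{j})$, since a priori the splitting $F_{j}=G_{j}+Q_{j}$ is not canonical and different $j$ could be split differently. The resolution is to observe that $G_{j}-G_{i}$ is again in the relevant class (Bohr $(\mathcal B,c)$-almost periodic / $(\mathcal B,c)$-uniformly recurrent), so its supremum over $I\times B$ is controlled by its behaviour near infinity, where $F_{j}-F_{i}$ is a good proxy for $G_{j}-G_{i}$; in the recurrent subcase this is precisely inequality \eqref{tupak12345ce}, and in the almost-periodic subcase it follows from the boundedness/relative-compactness machinery (Proposition \ref{bounded-pazice} and the reduction to $F_{x}$) together with the fact that an almost periodic scalar function attains its supremum norm arbitrarily far out. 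Everything else is routine $\epsilon/3$-bookkeeping and the triangle inequality.
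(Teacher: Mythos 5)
Part (i) of your argument is correct and is exactly the standard $\epsilon/3$ argument the paper has in mind (the paper prints no proof at all, only the remark that the statement ``can be deduced as in the case that $c=1$''); your reading of the enlarged sets $B^{\circ}\cup\bigcup_{x\in\partial B}B(x,\epsilon_{B})$ as serving only to secure continuity of the limit $F$ is right, and the diagonal construction in the uniformly recurrent subcase goes through.

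Part (ii), however, has a genuine gap at precisely the step you single out as ``the main obstacle''. Your resolution rests on the claim that $G_{j}-G_{i}$ is again Bohr $({\mathcal B},c)$-almost periodic (resp.\ $({\mathcal B},c)$-uniformly recurrent), so that \eqref{tupak12345ce} --- whose derivation uses a single sequence $({\bf \tau}_{k})$ of approximate $(\epsilon,c)$-periods \emph{for the function to which it is applied} --- can be invoked for the difference. Nothing in the paper establishes that these classes are closed under differences. For Bohr $({\mathcal B},c)$-almost periodic functions this would require $G_{j}$ and $G_{i}$ to admit a \emph{common} relatively dense set of $(\epsilon,c)$-periods, which is the classical nontrivial point in showing that sums of Bohr almost periodic functions are almost periodic (for $c=1$ one normally passes through Bochner's criterion or the approximation theorem, neither of which is available here for general $c$, $I$ and ${\mathcal B}$). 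For $({\mathcal B},c)$-uniformly recurrent functions the claim is false in general: the recurrence sequences of $G_{j}$ and $G_{i}$ may be entirely unrelated, and a difference of two uniformly recurrent functions need not be uniformly recurrent. Consequently the inequality $\sup_{I\times B}\|G_{j}-G_{i}\|\leq |c|^{-1}\sup_{|{\bf t}|\geq a}\|G_{j}-G_{i}\|$ is unjustified, and with it the uniform Cauchy property of $(G_{j})$, hence the existence of the limit decomposition $F=G+Q$. What the definitions do yield is only the one-function bound $\sup_{{\bf t}\in I,x\in B}\|G_{j}({\bf t};x)\|\leq |c|^{-1}\limsup_{|{\bf t}|\to\infty}\sup_{x\in B}\|G_{j}({\bf t};x)\|$; to close your argument you would need either a normality (Bochner-type) characterization of the class, or an a priori uniqueness-and-contractivity statement for the projection $F_{j}\mapsto G_{j}$, and you supply neither.
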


Now we will introduce the following definition (for any set $\Lambda \subseteq {\mathbb R}^{n}$ and number $M>0,$ we define $\Lambda_{M}:=\{ \lambda \in \Lambda \, ; \, |\lambda|\geq M \}$):

\begin{defn}\label{nafaks123456789012345123cea}
Suppose that 
${\mathbb D} \subseteq I \subseteq {\mathbb R}^{n}$ and the set ${\mathbb D}$ is unbounded, as well as
$\emptyset  \neq I'\subseteq I \subseteq {\mathbb R}^{n},$ $F : I \times X \rightarrow Y$ is a continuous function and $I +I' \subseteq I.$ Then we say that:
\begin{itemize}
\item[(i)]\index{function!${\mathbb D}$-asymptotically Bohr $({\mathcal B},I',c)$-almost periodic of type $1$}
$F(\cdot;\cdot)$ is ${\mathbb D}$-asymptotically Bohr $({\mathcal B},I',c)$-almost periodic  of type $1$ if and only if for every $B\in {\mathcal B}$ and $\epsilon>0$
there exist $l>0$ and $M>0$ such that for each ${\bf t}_{0} \in I'$ there exists ${\bf \tau} \in B({\bf t}_{0},l) \cap I'$ such that
\begin{align}\label{emojmarko145ce}
\bigl\|F({\bf t}+{\bf \tau};x)-cF({\bf t};x)\bigr\|_{Y} \leq \epsilon,\mbox{ provided } {\bf t},\ {\bf t}+\tau \in {\mathbb D}_{M},\ x\in B.
\end{align}
\item[(ii)] \index{function!${\mathbb D}$-asymptotically $({\mathcal B},I',c)$-uniformly recurrent  of type $1$}
$F(\cdot;\cdot)$ is ${\mathbb D}$-asymptotically $({\mathcal B},I',c)$-uniformly recurrent  of type $1$ if and only if for every $B\in {\mathcal B}$ 
there exist a sequence $({\bf \tau}_{k})$ in $I'$ and a sequence $(M_{k})$ in $(0,\infty)$ such that $\lim_{k\rightarrow +\infty} |{\bf \tau}_{k}|=\lim_{k\rightarrow +\infty}M_{k}=+\infty$ and
$$
\lim_{k\rightarrow +\infty}\sup_{{\bf t},{\bf t}+{\bf \tau}_{k}\in {\mathbb D}_{M_{k}};x\in B} \bigl\|F({\bf t}+{\bf \tau}_{k};x)-cF({\bf t};x)\bigr\|_{Y} =0.
$$
\end{itemize}
If $I'=I,$ then we also say that
$F(\cdot;\cdot)$ is ${\mathbb D}$-asymptotically Bohr $({\mathcal B},c)$-almost periodic of type $1$ (${\mathbb D}$-asymptotically $({\mathcal B},c)$-uniformly recurrent  of type $1$); furthermore, if $X\in {\mathcal B},$ then it is also said that $F(\cdot;\cdot)$ is ${\mathbb D}$-asymptotically Bohr $(I',c)$-almost periodic  of type $1$ (${\mathbb D}$-asymptotically $(I',c)$-uniformly recurrent  of type $1$). If $I'=I$ and $X\in {\mathcal B}$, then we also say that $F(\cdot;\cdot)$ is ${\mathbb D}$-asymptotically Bohr $c$-almost periodic of type $1$ (${\mathbb D}$-asymptotically $c$-uniformly recurrent of type $1$). As before, we remove the prefix ``${\mathbb D}$-'' in the case that ${\mathbb D}=I$ and remove the prefix ``$({\mathcal B},)$''  in the case that $X\in {\mathcal B}.$ 
\end{defn}

Clearly, we have the following:

\begin{prop}\label{okejecew}
Suppose that 
${\mathbb D} \subseteq I \subseteq {\mathbb R}^{n}$ and the set ${\mathbb D}$ is unbounded, as well as
$\emptyset  \neq I'\subseteq I \subseteq {\mathbb R}^{n},$ $F : I \times X \rightarrow Y$ is a continuous function and $I +I' \subseteq I.$ If  
$F(\cdot;\cdot)$ is ${\mathbb D}$-asymptotically Bohr $({\mathcal B},I',c)$-almost periodic, resp. ${\mathbb D}$-asymptotically $({\mathcal B},I',c)$-uniformly recurrent, then $F(\cdot;\cdot)$ is ${\mathbb D}$-asymptotically Bohr $({\mathcal B},I',c)$-almost periodic of type $1,$
resp. ${\mathbb D}$-asymptotically $({\mathcal B},I',c)$-uniformly recurrent of type $1$.
\end{prop}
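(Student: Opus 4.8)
The plan is to unwind Definition~\ref{braindamage12345} and Definition~\ref{nafaks123456789012345123cea}: writing $F({\bf t};x)=G({\bf t};x)+Q({\bf t};x)$ with $G$ Bohr $({\mathcal B},I',c)$-almost periodic (resp. $({\mathcal B},I',c)$-uniformly recurrent) and $Q\in C_{0,{\mathbb D},{\mathcal B}}(I\times X:Y)$, one sees that the ``type $1$'' notion is built so that the asymptotically negligible part $Q$ is exactly absorbed by the extra restriction ${\bf t},{\bf t}+{\bf \tau}\in{\mathbb D}_{M}$. (If $G$ is a priori given on ${\mathbb R}^{n}\times X$, its restriction to $I\times X$ remains Bohr $({\mathcal B},I',c)$-almost periodic, resp. $({\mathcal B},I',c)$-uniformly recurrent, since confining the variable ${\bf t}$ to $I$ only weakens the defining condition.)

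For part (i), I would fix $B\in{\mathcal B}$ and $\epsilon>0$, invoke the Bohr almost periodicity of $G$ for the number $\epsilon/3$ to obtain $l>0$ such that every ball $B({\bf t}_{0},l)$ with ${\bf t}_{0}\in I'$ contains some ${\bf \tau}\in I'$ with $\|G({\bf t}+{\bf \tau};x)-cG({\bf t};x)\|_{Y}\le\epsilon/3$ for all ${\bf t}\in I$, $x\in B$, and then use $Q\in C_{0,{\mathbb D},{\mathcal B}}$ to choose $M>0$ with $\|Q({\bf s};x)\|_{Y}\le\epsilon/(3(1+|c|))$ whenever ${\bf s}\in{\mathbb D}$, $|{\bf s}|\ge M$, $x\in B$. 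With this $l$ and this $M$, for any ${\bf t},{\bf t}+{\bf \tau}\in{\mathbb D}_{M}$ and $x\in B$ the triangle inequality
\begin{align*}
\|F({\bf t}+{\bf \tau};x)-cF({\bf t};x)\|_{Y} & \le\|G({\bf t}+{\bf \tau};x)-cG({\bf t};x)\|_{Y}+\|Q({\bf t}+{\bf \tau};x)\|_{Y}+|c|\,\|Q({\bf t};x)\|_{Y}
\end{align*}
is at most $\epsilon$, which is precisely \eqref{emojmarko145ce}.

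For part (ii), I would take the sequence $({\bf \tau}_{k})$ in $I'$ supplied by the $({\mathcal B},I',c)$-uniform recurrence of $G$, put $\eta(M):=\sup_{{\bf s}\in{\mathbb D},\,|{\bf s}|\ge M,\,x\in B}\|Q({\bf s};x)\|_{Y}$ (which tends to $0$ as $M\to+\infty$, by Definition~\ref{kompleks12345}), and take \emph{any} sequence $(M_{k})\subseteq(0,\infty)$ with $M_{k}\to+\infty$, e.g. $M_{k}:=k$. Then for ${\bf t},{\bf t}+{\bf \tau}_{k}\in{\mathbb D}_{M_{k}}$ and $x\in B$,
\begin{align*}
\bigl\|F({\bf t}+{\bf \tau}_{k};x)-cF({\bf t};x)\bigr\|_{Y} & \le\sup_{{\bf t}'\in I,\,x'\in B}\bigl\|G({\bf t}'+{\bf \tau}_{k};x')-cG({\bf t}';x')\bigr\|_{Y}+(1+|c|)\,\eta(M_{k}),
\end{align*}
and both terms on the right tend to $0$ as $k\to+\infty$, which gives the required ${\mathbb D}$-asymptotic $({\mathcal B},I',c)$-uniform recurrence of type $1$.

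I do not anticipate a genuine obstacle here: the statement is a soft consequence of the definitions. The only bookkeeping point is that $Q$ is evaluated both at ${\bf t}$ and at ${\bf t}+{\bf \tau}$, and the ``type $1$'' definition conveniently forces both points into ${\mathbb D}_{M}$, where $Q$ is uniformly small on $B$; in the recurrent case the auxiliary sequence $(M_{k})$ is essentially free, since $\eta(M)\to0$ unconditionally.
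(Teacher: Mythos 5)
Your proof is correct and is exactly the definitional unwinding the paper has in mind: the paper states this proposition with no proof at all (prefaced only by ``Clearly, we have the following''), and your decomposition $F=G+Q$ with the $G$-term controlled by the almost periodicity (resp.\ uniform recurrence) of $G$ and the two $Q$-terms absorbed by the restriction ${\bf t},\,{\bf t}+{\bf \tau}\in{\mathbb D}_{M}$ is the intended argument. The bookkeeping, including the factor $1+|c|$ and the observation that the restriction of a strongly defined $G$ to $I\times X$ keeps the relevant property, is all in order.
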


Concerning the converse of Proposition \ref{okejecew}, we will state and prove the following statement which can be applied in the case that $I=[0,\infty)^{n}:$

\begin{thm}\label{bounded-paziemceq}
Suppose that $\emptyset  \neq I \subseteq {\mathbb R}^{n},$ $I +I =I,$ $I$ is closed and
$F : I \rightarrow Y$ is a uniformly continuous, bounded
$I$-asymptotically Bohr $c$-almost periodic function of type $1,$ where $|c|=1.$ If
\begin{align*}
\notag
(\forall l>0) \, (\forall M>0) \, (\exists {\bf t_{0}}\in I)\, (\exists k>0) &\, (\forall {\bf t} \in I_{M+l})(\exists {\bf t_{0}'}\in I)\,
\\ & (\forall {\bf t_{0}''}\in B({\bf t_{0}'},l) \cap I)\, {\bf t}- {\bf t_{0}''} \in B({\bf t_{0}},kl) \cap I_{M},
\end{align*}
there exists $L>0$ such that $I_{kL}\setminus I_{(k+1)L}  \neq \emptyset$ for all $k\in {\mathbb N}$ and $I_{M}+I\subseteq  I_{M}$ for all $M>0,$
then the function $F(\cdot)$ is $I$-asymptotically Bohr $c$-almost periodic.
\end{thm}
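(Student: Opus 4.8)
\emph{Strategy.} The plan is to produce a decomposition $F=G+Q$ with $G:I\rightarrow Y$ Bohr $c$-almost periodic and $Q\in C_{0,I}(I:Y)$, taking $G$ to be a uniform limit of suitably rescaled translates of $F$. First I would apply Definition \ref{nafaks123456789012345123cea}(i) (with $I'=I$) with $\epsilon=1/j$, $j\in\mathbb{N}$, to obtain $l_{j},M_{j}>0$; we may assume $M_{j}\uparrow +\infty$. Since $I$ is unbounded (the hypothesis $I_{kL}\setminus I_{(k+1)L}\neq\emptyset$ for all $k$ forces points of every norm-scale), I would then pick a sequence $({\bf a}_{m})$ in $I$ with $|{\bf a}_{m}|\rightarrow +\infty$ such that each ${\bf a}_{m}$ is a type-$1$ $\delta_{m}$-almost period of $F$ for some $\delta_{m}\downarrow 0$; that is, $\|F({\bf t}+{\bf a}_{m})-cF({\bf t})\|_{Y}\leq\delta_{m}$ whenever ${\bf t},{\bf t}+{\bf a}_{m}\in I_{N(\delta_{m})}$, where $N(\delta_{m})$ is the threshold furnished by the definition (such ${\bf a}_{m}$ exist because the type-$1$ $\delta_{m}$-almost periods are relatively dense, so one can be found in a ball about a base point of arbitrarily large norm). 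Put $T_{m}:=c^{-1}F(\cdot+{\bf a}_{m}):I\rightarrow Y$, which is well defined since $I+I\subseteq I$ and $|c|=1$; the family $(T_{m})$ is uniformly bounded and equicontinuous because $F$ is bounded and uniformly continuous.

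\emph{The main step.} The hard part will be to pass to a subsequence along which $(T_{m})$ converges \emph{uniformly on all of $I$} — convergence uniformly on compact subsets being immediate from the Arzel\`a--Ascoli theorem plus a diagonal argument, but insufficient for what follows. This is exactly where the quantified geometric hypothesis is used, in the spirit of the proof of Proposition \ref{bounded-pazice}: given $\epsilon>0$, choose $l,M$ from the type-$1$ property with tolerance $\epsilon/3$; for an arbitrary ${\bf t}\in I_{M+l}$ the hypothesis lets us write ${\bf t}={\bf u}+{\bf v}$ with ${\bf u}={\bf t}-{\bf v}\in B({\bf t}_{0},kl)\cap I_{M}$ (a fixed bounded set) and ${\bf v}$ lying in a ball $B({\bf t}'_{0},l)\cap I$ which, by relative density of the type-$1$ $(\epsilon/3)$-almost periods, can be taken to consist of such a period; invoking $I_{M}+I\subseteq I_{M}$ one controls far-out differences of $F$ by the values of $F$ on the fixed compact set $B({\bf t}_{0},kl)\cap I$, and concludes that $\{F(\cdot+{\bf a}):{\bf a}\in I\}$ is totally bounded in $C_{b}(I:Y)$. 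Hence a subsequence of $(T_{m})$ — still denoted $(T_{m})$ — converges uniformly on $I$ to some $G:I\rightarrow Y$; after this extraction we still have $\delta_{m}\downarrow 0$ and $|{\bf a}_{m}|\rightarrow +\infty$. I expect the delicate bookkeeping here (tracking which estimates survive on the non-far-out part of $I$) to absorb essentially all of the work.

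\emph{Conclusion.} It remains to check the two properties. For the Bohr $c$-almost periodicity of $G$: fix $j$, let $\sigma\in I$ be any type-$1$ $(1/j)$-almost period of $F$, and take ${\bf t}\in I$; for $m$ large ${\bf a}_{m}\in I_{M_{j}}$, so ${\bf t}+{\bf a}_{m}$ and ${\bf t}+\sigma+{\bf a}_{m}$ lie in $I_{M_{j}}$ by $I_{M_{j}}+I\subseteq I_{M_{j}}$, whence $\|T_{m}({\bf t}+\sigma)-cT_{m}({\bf t})\|_{Y}=\|F({\bf t}+\sigma+{\bf a}_{m})-cF({\bf t}+{\bf a}_{m})\|_{Y}\leq 1/j$; letting $m\rightarrow+\infty$ gives $\|G({\bf t}+\sigma)-cG({\bf t})\|_{Y}\leq 1/j$ for \emph{all} ${\bf t}\in I$, and the admissible $\sigma$ form an $l_{j}$-relatively dense subset of $I$, so $G$ is Bohr $c$-almost periodic. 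Finally set $Q:=F-G$, which is continuous. Given $\epsilon>0$, fix $m$ with $\delta_{m}<\epsilon/2$ and $\|T_{m}-G\|_{\infty}<\epsilon/2$; then for every ${\bf t}\in I$ with $|{\bf t}|\geq N(\delta_{m})$ we have ${\bf t},{\bf t}+{\bf a}_{m}\in I_{N(\delta_{m})}$ (using $I_{M}+I\subseteq I_{M}$), so $\|F({\bf t})-T_{m}({\bf t})\|_{Y}=\|F({\bf t}+{\bf a}_{m})-cF({\bf t})\|_{Y}\leq\delta_{m}$ and hence $\|Q({\bf t})\|_{Y}\leq\delta_{m}+\|T_{m}-G\|_{\infty}<\epsilon$. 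Therefore $Q\in C_{0,I}(I:Y)$, and $F=G+Q$ displays $F$ as an $I$-asymptotically Bohr $c$-almost periodic function.
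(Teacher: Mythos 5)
Your proposal is correct and follows essentially the same route as the paper: the almost periodic part is obtained as a uniform-on-$I$ limit of translates $c^{-1}F(\cdot+{\bf a}_{m})$ along far-out type-$1$ almost periods, and your two concluding verifications (Bohr $c$-almost periodicity of the limit, and the corrector lying in $C_{0,I}(I:Y)$) match the paper's. The one step you flag as absorbing all the work --- uniform (not merely locally uniform) convergence on $I$ of a subsequence of translates --- is exactly what the paper delegates to \cite[Theorem 2.34]{marko-manuel-ap}, after first using boundedness and $|c|=1$ to pass from type-$1$ $c$-almost periodicity to type-$1$ almost periodicity, so your direct total-boundedness sketch would in effect have to reproduce that cited argument rather than constituting a genuinely different proof.
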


\begin{proof}
Since we have assumed that the function $F(\cdot)$ is bounded and $|c|=1,$ we can use the foregoing arguments in order to see that 
the function $F(\cdot)$ is $I$-asymptotically Bohr almost periodic function of type $1.$ By \cite[Theorem 2.34]{marko-manuel-ap}, 
it follows that for each sequence $({\bf b}_{k})$ in  
$I$ there exist a subsequence 
$({\bf b}_{k_{l}})$ of $({\bf b}_{k})$ and a function $F^{\ast} : I \rightarrow Y$ such that $\lim_{l\rightarrow +\infty}F({\bf t}+{\bf b}_{k_{l}})=F^{\ast}({\bf t}),$ uniformly in ${\bf t}\in I.$ We continue the proof by observing that
for each integer $k\in {\mathbb N}$  
there exist $l_{k}>0$ and $M_{k}>0$ such that for each ${\bf t}_{0} \in I$ there exists ${\bf \tau} \in B({\bf t}_{0},l) \cap I$ such that
\eqref{emojmarko145ce} holds with $\epsilon=1/k$ and ${\mathbb D}=I.$
Let ${\bf \tau}_{k}$ be any fixed element of $I$ such that $|{\bf \tau}_{k}|>M_{k}+k^{2}$ and \eqref{emojmarko145ce} holds with $\epsilon=1/k$ and ${\mathbb D}=I$ ($k\in {\mathbb N}$).
Then there exist of a subsequence $({\bf \tau}_{k_{l}})$ of $({\bf \tau}_{k})$ and a function
$F^{\ast} : I \rightarrow Y$
such that
\begin{align}\label{prce-franjo}
\lim_{l\rightarrow +\infty}F({\bf t}+{\bf \tau}_{k_{l}})=F^{\ast}({\bf t}),\mbox{ uniformly for }t\in I.
\end{align} 
The mapping $F^{\ast}(\cdot)$ is clearly continuous and now we will prove that $F^{\ast}(\cdot)$ is Bohr $c$-almost periodic. 
Let $\epsilon>0$ be fixed, and let $l>0$ and $M>0$ be such that for each ${\bf t}_{0} \in I$ there exists ${\bf \tau} \in B({\bf t}_{0},l) \cap I$ such that
\eqref{emojmarko145ce} holds with ${\mathbb D}=I$ and the number $\epsilon$ replaced therein by $\epsilon/3.$ Let ${\bf t }\in I$ be fixed, and let $l_{0}\in {\mathbb N}$ be such that $|{\bf t}+{\bf \tau}_{k_{l_{0}}}|\geq M$ and $|{\bf t}+{\bf \tau}+{\bf \tau}_{k_{l_{0}}}|\geq M.$ Then we have 
\begin{align*}
\Bigl\| & F^{\ast}({\bf t}+{\bf \tau})- cF^{\ast}({\bf t})\Bigr\|
\\& \leq \Bigl\|  F^{\ast}({\bf t}+{\bf \tau})- F\bigl({\bf t}+{\bf \tau}+{\bf \tau}_{k_{l_{0}}}\bigr)\Bigr\|+\Bigl\|  F\bigl({\bf t}+{\bf \tau}+{\bf \tau}_{k_{l_{0}}}\bigr)-c F\bigl({\bf t}+{\bf \tau}_{k_{l_{0}}}\bigr)\Bigr\|
\\&+
\Bigl\|  cF\bigl({\bf t}+{\bf \tau}_{k_{l_{0}}}\bigr)-cF^{\ast}({\bf t}) \Bigr\|\leq 3\cdot (\epsilon/3)=\epsilon,
\end{align*}
as required. The function ${\bf t} \mapsto F({\bf t} )-F^{\ast}({\bf t} ),$ ${\bf t} \in I$ belongs to the space $C_{0,I}(I: Y)$ due to \eqref{prce-franjo} and the fact that $F : I \rightarrow Y$ is an
$I$-asymptotically Bohr $c$-almost periodic function of type $1,$ which completes the proof.
\end{proof}

For any set $S\subseteq {\mathbb R}^{n}$ and for any integer $l\in {\mathbb N}$, we define the set $S_{l}$ inductively by $S_{1}:=S$ and $S_{l+1}:=S_{l}+S$ ($l=1,2, \cdot \cdot \cdot). $
Further on, we define  $\Omega:=I'$ and $\Omega_{S}:=I' \cup S$ if $\arg(c) /\pi \notin {\mathbb Q}.$ If $\arg(c) /\pi \in {\mathbb Q},$ then we
take any non-empty finite set of integers $S_{1}\subseteq {\mathbb Z} \setminus \{0\}$ such that $c^{m+1}=1$ for all $m\in S_{1}$ and any 
non-empty finite set of integers $S_{2}\subseteq {\mathbb N}$ such that $c^{l}=1$ for all $l\in S_{2};$
in this case, we set
$\Omega_:=(I'\bigcup_{m\in S_{1}} (-mI'))_{l}$ and $\Omega_{S}:=\Omega\cup S.$ 

Now we are able to state and prove the following result concerning the extensions of Bohr $(I',c)$-almost periodic functions and $(I',c)$-uniformly recurrent functions (in \cite[Proposition 2.25]{c1}, we have obeyed a different approach, where we have also considered semi-$c$-periodicity but not $c$-uniform recurrence): 

\begin{thm}\label{lenny-jassonce}
Suppose that $ I'\subseteq I \subseteq {\mathbb R}^{n},$ $I +I' \subseteq I,$ the set $I'$ is unbounded, $|c|=1,$ $F : I  \rightarrow Y$ is a uniformly continuous, Bohr $(I',c)$-almost periodic function, resp. a uniformly continuous, $(I',c)$-uniformly recurrent function, $S\subseteq {\mathbb R}^{n}$ is bounded and the following condition holds:
\begin{itemize}
\item[(AP-E)]\index{condition!(AP-E)}
For every ${\bf t}'\in {\mathbb R}^{n},$
there exists a finite real number $M>0$ such that
${\bf t}'+I'_{M}\subseteq I.$
\end{itemize}
Then there exists a uniformly continuous, Bohr $(\Omega_{S},c)$-almost periodic, resp. a uniformly continuous, $(\Omega_{S},c)$-uniformly recurrent, function
$\tilde{F} : {\mathbb R}^{n}  \rightarrow Y$ such that $\tilde{F}({\bf t})=F({\bf t})$ for all ${\bf t}\in I;$ furthermore, in $c$-almost periodic case, the uniqueness of such a function $\tilde{F}(\cdot)$ holds provided that ${\mathbb R}^{n} \setminus \Omega_{S}$ is a bounded set. 
\end{thm}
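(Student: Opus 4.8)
The plan is to build $\tilde F:{\mathbb R}^{n}\to Y$ by a limiting procedure along ``far away'' $(\epsilon,c)$-periods (resp. recurrence shifts) of $F$, to check that it is uniformly continuous and restricts to $F$ on $I$, and then to obtain the stated almost periodicity / uniform recurrence by transporting the periods of $F$ from $I$ to ${\mathbb R}^{n}$ and enlarging the admissible set of periods with the help of the bounded set $S$ (and, when $\arg(c)/\pi\in{\mathbb Q}$, of the negative and iterated periods produced exactly as in Remark \ref{era-cea}(i) via \eqref{valevo}). First I would fix ${\bf s}\in{\mathbb R}^{n}$ and, using (AP-E), pick $M_{\bf s}>0$ with ${\bf s}+I'_{M_{\bf s}}\subseteq I$. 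In the Bohr case, for $\epsilon>0$ let $\Theta(\epsilon)$ be the set of $\tau\in I'$ with $\|F({\bf t}+\tau)-cF({\bf t})\|_{Y}\le\epsilon$ for all ${\bf t}\in I$; by hypothesis $\Theta(\epsilon)$ is relatively dense in $I'$, and since $I'$ is unbounded there is $\tau\in\Theta(\epsilon)$ with $|\tau|\ge M_{\bf s}$, whence ${\bf s}+\tau\in I$. Using $I+I'\subseteq I$ and $|c|=1$ one checks that $A_{\epsilon}({\bf s}):=\{c^{-1}F({\bf s}+\tau):\tau\in\Theta(\epsilon),\ |\tau|\ge M_{\bf s}\}$ is nonempty, has diameter $\le2\epsilon$, and satisfies $A_{\epsilon_{1}}({\bf s})\subseteq A_{\epsilon_{2}}({\bf s})$ whenever $\epsilon_{1}\le\epsilon_{2}$, so by completeness of $Y$ there is a single point $\tilde F({\bf s})\in\bigcap_{\epsilon>0}\overline{A_{\epsilon}({\bf s})}$; in the uniformly recurrent case I would instead use the sequence $({\bf \tau}_{k})\subseteq I'$ from the definition and put $\tilde F({\bf s}):=\lim_{k}c^{-1}F({\bf s}+{\bf \tau}_{k})$, the same estimate showing this sequence is Cauchy. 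Evaluating at ${\bf s}\in I$ gives $\tilde F({\bf s})=F({\bf s})$ since then $\|F({\bf s}+\tau)-cF({\bf s})\|_{Y}\le\epsilon$ (resp. $\to0$); and for ${\bf s},{\bf s}'\in{\mathbb R}^{n}$, choosing via (AP-E) one $M$ valid for both and a period $\tau$ (resp. shift ${\bf \tau}_{k}$) with $|\tau|\ge M$, one gets $\|\tilde F({\bf s})-\tilde F({\bf s}')\|_{Y}\le 4\epsilon+\|F({\bf s}+\tau)-F({\bf s}'+\tau)\|_{Y}$, so letting $\epsilon\downarrow0$ shows $\tilde F$ is uniformly continuous with the same modulus as $F$.

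Next I would transport the periods to ${\mathbb R}^{n}$: if $\tau\in\Theta(\epsilon)$, then for any ${\bf s}$ both $\tilde F({\bf s})$ and $\tilde F({\bf s}+\tau)$ are limits of $c^{-1}F(\cdot+\sigma_{k})$ along periods $\sigma_{k}$ which, by (AP-E) applied at ${\bf s}$ and at ${\bf s}+\tau$, can be taken large enough to serve both points, and since ${\bf s}+\sigma_{k}\in I$ this gives $\|\tilde F({\bf s}+\tau)-c\tilde F({\bf s})\|_{Y}\le\epsilon$; hence every element of $\Theta(\epsilon)$ is an $(\epsilon,c)$-period of $\tilde F$ on ${\mathbb R}^{n}$, and these form a relatively dense subset of $I'$. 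Since $S$ is bounded and $I'\neq\emptyset$, $S$ lies within a bounded distance of $I'$, so enlarging $l$ accordingly makes the $(\epsilon,c)$-periods of $\tilde F$ relatively dense in $I'\cup S$, which proves the claim when $\arg(c)/\pi\notin{\mathbb Q}$. When $\arg(c)/\pi\in{\mathbb Q}$ I would additionally argue as in Remark \ref{era-cea}(i): for $m\in S_{1}$ one has $c^{m}=c^{-1}$, so \eqref{valevo} shows $-m\tau$ (with $\tau\in\Theta(\epsilon/m)$) is an $(\epsilon,c)$-period of $\tilde F$, while for $l\in S_{2}$ (so $c^{l+1}=c$) sums of $l+1$ elements of $\Theta(\epsilon/(l+1))$ are $(\epsilon,c)$-periods of $\tilde F$ lying in the relevant iterated sum-set; the finite union of these families, after adjusting $l$, is relatively dense in $\Omega_{S}$, which is Bohr $(\Omega_{S},c)$-almost periodicity. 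In the uniformly recurrent case the same computation yields directly $\sup_{{\bf s}\in{\mathbb R}^{n}}\|\tilde F({\bf s}+{\bf \tau}_{k})-c\tilde F({\bf s})\|_{Y}\le\delta_{k}\to0$, where $\delta_{k}:=\sup_{{\bf t}\in I}\|F({\bf t}+{\bf \tau}_{k})-cF({\bf t})\|_{Y}$, so $\tilde F$ is $(I',c)$- and hence $(\Omega_{S},c)$-uniformly recurrent.

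For uniqueness, assume ${\mathbb R}^{n}\setminus\Omega_{S}$ is bounded and let $\tilde F_{1},\tilde F_{2}$ be two extensions as in the statement; since $\Omega_{S}$ is then co-bounded, the $(\epsilon,c)$-periods of each $\tilde F_{i}$ are relatively dense in all of ${\mathbb R}^{n}$, so each $\tilde F_{i}$ is Bohr $c$-almost periodic on ${\mathbb R}^{n}$, hence so is $H:=\tilde F_{1}-\tilde F_{2}$ (the space of Bohr $c$-almost periodic functions on ${\mathbb R}^{n}$ being a vector space, cf. the structural results recalled in Remark \ref{era-cea}(iii)), and $H\equiv0$ on $I$. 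Moreover, boundedness of ${\mathbb R}^{n}\setminus\Omega_{S}$ together with (AP-E) applied at ${\bf 0}$ (which gives $I'_{M}\subseteq I$) forces $I$ to be co-bounded; hence for every ${\bf s}\in{\mathbb R}^{n}$ and $\epsilon>0$ one can, by relative density of the $(\epsilon,c)$-periods of $H$, pick $\tau$ with $|\tau|$ so large that ${\bf s}+\tau\in I$, and then $\|H({\bf s})\|_{Y}=\|cH({\bf s})-H({\bf s}+\tau)\|_{Y}\le\epsilon$, whence $H\equiv0$. The step I expect to be the main obstacle is matching the enlargement of the period set to the precise form of $\Omega_{S}$ in the case $\arg(c)/\pi\in{\mathbb Q}$ — reconciling the negative periods from $-mI'$ ($m\in S_{1}$) with the iterated sum-sets indexed by $S_{2}$ — and dovetailing this with the co-boundedness hypothesis of the uniqueness part; by contrast, the limiting construction of $\tilde F$ and the basic transport of periods should be routine once (AP-E) and $|c|=1$ are in hand.
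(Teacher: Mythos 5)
Your construction of $\tilde{F}$ and the transfer of periods from $I$ to ${\mathbb R}^{n}$ follow essentially the same route as the paper: define $\tilde{F}({\bf t}')$ as a limit of $c^{-1}F({\bf t}'+\tau)$ along far-away $(\epsilon,c)$-periods supplied by (AP-E) and the unboundedness of $I'$, verify the Cauchy property, uniform continuity and the restriction property, and then pass to the limit inside the norm to see that every $(\epsilon,c)$-period of $F$ on $I$ is an $(\epsilon,c)$-period of $\tilde{F}$ on ${\mathbb R}^{n}$ (this is exactly \eqref{valjevo}); the rational case is handled through \eqref{valevo} in both arguments. Your nested-sets version of the Cauchy step (diameter of $A_{\epsilon}({\bf s})$ at most $2\epsilon$ via the symmetric estimate through $F({\bf s}+\tau+\tau')$) is a small but clean variant of the paper's three-term estimate, and your normalization by $c^{-1}$ yields $\tilde{F}=F$ on $I$ directly, whereas the paper's limit produces $cF$ on $I$ and has to renormalize.

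The genuine gap is in the uniqueness part. You assert that $H:=\tilde{F}_{1}-\tilde{F}_{2}$ is Bohr $c$-almost periodic because the Bohr $c$-almost periodic functions on ${\mathbb R}^{n}$ form a vector space. Closure under sums is not available from the definition: two Bohr $c$-almost periodic functions need not have common $(\epsilon,c)$-periods, and proving closure under addition requires first passing to boundedness and to ordinary almost periodicity via a Bochner-type compactness argument. This is precisely why the paper's proof first invokes Proposition \ref{bounded-pazice} to conclude that $\tilde{F}$ is bounded, then uses $|c|=1$ to upgrade Bohr $c$-almost periodicity to Bohr almost periodicity (hence compact almost automorphy), and only then runs the uniqueness argument inside the almost periodic class, where differences are again almost periodic. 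A second, smaller issue: your claim that boundedness of ${\mathbb R}^{n}\setminus\Omega_{S}$ forces $I$ to be co-bounded rests on $\Omega_{S}=I'\cup S$ and therefore only covers the case $\arg(c)/\pi\notin{\mathbb Q}$; when $\arg(c)/\pi\in{\mathbb Q}$ the set $\Omega_{S}$ is built from iterated sumsets of $I'$ and $-mI'$, and its co-boundedness does not obviously transfer back to $I'$, hence not to $I$ via (AP-E). The final evaluation step $\|H({\bf s})\|_{Y}=\|cH({\bf s})-H({\bf s}+\tau)\|_{Y}\leq\epsilon$ therefore needs a more careful selection of $\tau$ than relative density in ${\mathbb R}^{n}$ combined with co-boundedness of $I$; everything before the uniqueness claim is sound.
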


\begin{proof}
We will consider only uniformly continuous, Bohr $(I',c)$-almost periodic functions.
In this case,
for each natural number $k\in {\mathbb N}$  
there exists a point $\tau_{k}\in I'$ such that
$
\|F({\bf t}+{\bf \tau}_{k})-cF({\bf t})\|_{Y} \leq 1/k$ for all ${\bf t}\in I
$ and $k\in {\mathbb N};$ furthermore, since the set $I'$ is unbounded, we may assume without loss of generality that $\lim_{k\rightarrow +\infty}|{\bf \tau}_{k}|=+\infty.$ 
Hence, we have
\begin{align}\label{fuckfucka}
\lim_{k\rightarrow +\infty}F({\bf t}+{\bf \tau}_{k})=cF({\bf t}),\mbox{ uniformly for }t\in I.
\end{align}
If ${\bf t}'\in {\mathbb R}^{n},$ then there exists a finite real number $M>0$ such that
${\bf t}'+I'_{M}\subseteq I,$ and now we will prove that the sequence $(F({\bf t}'+{\bf \tau}_{k}))_{k\in {\mathbb N}}$
is Cauchy and therefore convergent. Let $\epsilon>0$ be fixed; then we have the existence of a number $k_{0}\in {\mathbb N}$
such that ${\bf t}'+{\bf \tau}_{k} \in I$ for all $k\geq k_{0}.$ Suppose that $k,\ m\geq k_{0}.$ Then we have
\begin{align*}
& \bigl\|  F({\bf t}'+{\bf \tau}_{k})-F({\bf t}'+{\bf \tau}_{m})\bigr\|\leq \bigl\| F({\bf t}'+{\bf \tau}_{k}) -c^{-1}F({\bf t}'+{\bf \tau}_{k}+\tau)\bigr\|
\\&+\bigl\|c^{-1}F({\bf t}'+{\bf \tau}_{k}+\tau) -c^{-1}F({\bf t}'+{\bf \tau}_{m}+\tau)\bigr\|+\bigl\| c^{-1}F({\bf t}'+{\bf \tau}_{m}+\tau) -F({\bf t}'+{\bf \tau}_{m})\bigr\|,
\end{align*}
for any $\tau \in I'$ such that ${\bf t}'+\tau \in I.$ Since the function $F(\cdot)$ is Bohr $(I',c)$-almost periodic, we can always find such a number $\tau$ so that the first and the third addend in the above estimates are less or equal than $\epsilon/3;$ for the second 
addend in the above estimate, we can
find a sufficiently large number $k_{1}\geq k_{0}$ such that 
$$
\bigl\|c^{-1}F({\bf t}'+{\bf \tau}_{k}+\tau) -c^{-1}F({\bf t}'+{\bf \tau}_{m}+\tau)\bigr\|<\epsilon/3,
$$
for all $k,\ m\geq k_{1}$ (see \eqref{fuckfucka}). Therefore, 
$\lim_{k\rightarrow +\infty}F({\bf t}'+{\bf \tau}_{k}):=\tilde{F}({\bf t}')$ exists.
The function $\tilde{F}(\cdot)$ is clearly uniformly continuous because $F(\cdot)$ is uniformly continuous; furthermore, by construction, we have that $\tilde{F}({\bf t})/c=F({\bf t})$ for all ${\bf t}\in I.$ 
Now we will prove that the function $\tilde{F}(\cdot)$
is Bohr $(\Omega_{S},c)$-almost periodic. Let a number $\epsilon>0 $ be given. Then there exists $l>0$ such that for each ${\bf t}_{0} \in I'$ there exists ${\bf \tau} \in B({\bf t}_{0},l) \cap I'$ such that
$
\|F({\bf t}+{\bf \tau})-cF({\bf t})\|_{Y} \leq \epsilon/2$ for all $ {\bf t}\in I.$ Let ${\bf t}'\in {\mathbb R}^{n}$ be fixed. For any such numbers ${\bf t}_{0} \in I'$ and ${\bf \tau} \in B({\bf t}_{0},l) \cap I',$ we have
\begin{align}
\notag \bigl\|\tilde{F}({\bf t}'&+{\bf \tau})-c\tilde{F}({\bf t}')\bigr\|_{Y} = \Bigl\|\lim_{k\rightarrow +\infty}\bigl[F({\bf t}'+{\bf \tau}+{\bf \tau}_{k})-cF({\bf t}'+{\bf \tau}_{k})\bigr]\Bigr\|_{Y} 
\\\label{valjevo} & \leq \limsup_{k\rightarrow +\infty}\bigl\|F({\bf t}'+{\bf \tau}+{\bf \tau}_{k})-cF({\bf t}'+{\bf \tau}_{k})\bigr\|_{Y}  
\leq \epsilon/2,\quad {\bf t}'\in {\mathbb R}^{n}.
\end{align}
If $\arg(c) /\pi \notin {\mathbb Q},$ this clearly implies 
that $F(\cdot)$ is Bohr $(\Omega,c)$-almost periodic and therefore Bohr ($\Omega_{S},c)$-almost periodic. If $\arg(c) /\pi \in {\mathbb Q},$ then we may assume without loss of generality that the sets $S_{1}=\{m\}$ and $S_{2}=\{l\}$ are singletons (this follows from the corresponding definition of Bohr $(I',c)$-almost periodicity). Given
$\epsilon>0 $ in advance, we may assume that \eqref{valjevo} holds with the number $\epsilon/2$ replaced therein with the number $\epsilon/l|m|.$
By \eqref{valevo}, we get that  the number $-m\tau \in \Omega$ is an $(\epsilon/l,c)$-period of $F(\cdot),$ with the meaning clear. Arguing as in the proof of the estimate \eqref{valevo}, it readily follows that any finite sum $\tau_{1}+\cdot \cdot \cdot +\tau_{l},$ where $\tau_{i}\in I'\bigcup_{m\in S_{1}} (-mI')$ for all $i\in {\mathbb N}_{l},$ is an $(\epsilon,c)$-period of $F(\cdot).$ As above, this implies that $F(\cdot)$ is Bohr $(\Omega,c)$-almost periodic and therefore Bohr ($\Omega_{S},c)$-almost periodic.

Assume, finally, that the set ${\mathbb R}^{n}\setminus \Omega_{S}$ is bounded. 
Then the function $\tilde{F}(\cdot)$ is Bohr $c$-almost periodic and bounded by Proposition \ref{bounded-pazice}; by the foregoing, this implies that the function $F(\cdot)$ is Bohr almost periodic and therefore compactly
almost automorphic. 
Then we can proceed as in the final part of the proof of \cite[Theorem 2.36]{c1} to prove the uniqueness of extension in $c$-almost periodic case. 
\end{proof}

\begin{rem}\label{minusplus1}
\begin{itemize}
\item[(i)]
It is clear that Theorem \ref{lenny-jassonce} strengthens \cite[Theorem 2.36]{marko-manuel-ap}, where we have assumed that $c=1$ and $\Omega_{S}=[(I'\cup (-I'))+(I'\cup (-I'))] \cup S.$ 
\item[(ii)] In the case that $\arg(c) /\pi \notin {\mathbb Q},$ it is not clear whether there exists a set $\Omega_{S}'\supseteq \Omega_{S}$ such that the constructed function $\tilde{F}: {\mathbb R}^{n} \rightarrow Y$ is Bohr $(\Omega_{S}',c)$-almost periodic. Concerning this
problematic, it is worth noting that the notion introduced in Definition \ref{nafaks123456789012345} can be further extended by allowing that the set $I'$ depends on the set $B$ and the number $\epsilon>0.$ This could probably fix some things here, but we will skip all related details for the sake of brevity.
\end{itemize}
\end{rem}

Before proceeding further, we would like to propose the following definition:

\begin{defn}\label{ujseadmiceqs}
Suppose that $\emptyset \neq I \subseteq {\mathbb R}^{n}$ and $I+I \subseteq I.$ Then we say that $I$ is admissible with respect to the $c$-almost periodic extensions if and only if for any complex Banach space $Y$ and for any 
uniformly continuous, Bohr $c$-almost periodic function $F : I\rightarrow Y$ there exists a unique Bohr $c$-almost periodic function $\tilde{F} : {\mathbb R}^{n} \rightarrow Y$ such that $\tilde{F}({\bf t})=F({\bf t})$ for all ${\bf t}\in I.$ 
If $c=\pm 1,$ then we also say that the region $I$ is admissible with respect to the almost (anti-)periodic extensions.
\end{defn}

If $|c|=1,$ $\arg(c) /\pi \in {\mathbb Q},$ $({\bf v}_{1},\cdot \cdot \cdot ,{\bf v}_{n})$ is a basis of ${\mathbb R}^{n}$ and
$$
I'=I=\bigl\{ \alpha_{1} {\bf v}_{1} +\cdot \cdot \cdot +\alpha_{n}{\bf v}_{n}  : \alpha_{i} \geq 0\mbox{ for all }i\in {\mathbb N}_{n} \bigr\}
$$ 
is a convex polyhedral in ${\mathbb R}^{n},$ then $\Omega_{S}={\mathbb R}^{n}$ and therefore the set $I$ is admissible with respect to the $c$-almost periodic extensions. It is very simple to construct some sets which are not 
admissible with respect to the $c$-almost periodic extensions; for example,
the set $I=[0,\infty) \times \{0\}\subseteq {\mathbb R}^{2}$ is not admissible with respect to the $c$-almost periodic extensions since there is no $c$-almost periodic extension
of the function $F(x,y)=y,$ $(x,y)\in I$ to the whole Euclidean space (\cite{c1}). 

\section{Examples and applications}\label{some12345}

In this section, we will present several interesting examples and applications of our abstract theoretical results. The first and second application have recently been considered in \cite{marko-manuel-ap}, with $c=1$:
\vspace{0.1cm}

1. Let $Y$ be one of the spaces $L^{p}({\mathbb R}^{n}),$ $C_{0}({\mathbb R}^{n})$ or $BUC({\mathbb R}^{n}),$ where $1\leq p<\infty.$ Then the Gaussian semigroup\index{Gaussian semigroup}
$$
(G(t)F)(x):=\bigl( 4\pi t \bigr)^{-(n/2)}\int_{{\mathbb R}^{n}}F(x-y)e^{-\frac{|y|^{2}}{4t}}\, dy,\quad t>0,\ f\in Y,\ x\in {\mathbb R}^{n},
$$
can be extended to a bounded analytic $C_{0}$-semigroup of angle $\pi/2,$ generated by the Laplacian $\Delta_{Y}$ acting with its maximal distributional domain in $Y;$ see \cite[Example 3.7.6]{a43}. Suppose 
that 
$\emptyset  \neq I'\subseteq I= {\mathbb R}^{n}$ and
$F(\cdot)$ is bounded Bohr $({\mathcal B},I',c)$-almost periodic, resp. bounded $({\mathcal B},I',c)$-uniformly recurrent. Then for each $t_{0}>0$ the function ${\mathbb R}^{n}\ni x\mapsto u(x,t_{0})\equiv (G(t_{0})F)(x) \in {\mathbb C}$
is likewise bounded Bohr $({\mathcal B},I',c)$-almost periodic, resp. bounded $({\mathcal B},I',c)$-uniformly recurrent. Towards this end, observe that for each $x,\,\ \tau \in {\mathbb R}^{n}$ we have:
$$
\Bigl|u\bigl(x+\tau,t_{0}\bigr)-cu\bigl(x,t_{0}\bigr)\Bigr| \leq \bigl( 4\pi t_{0} \bigr)^{-(n/2)}\int_{{\mathbb R}^{n}}|F(x-y+\tau)-cF(x-y)|e^{-\frac{|y|^{2}}{4t_{0}}}\, dy.
$$
see also Proposition \ref{convdiaggacece}.
We can  similarly clarify the corresponding results for the Poisson semigroup, which is analyzed in
\cite[Example 3.7.9]{a43}.

2. Define
$$
E_{1}(x,t):=\bigl( \pi t\bigr)^{-1/2}\int^{x}_{0}e^{-y^{2}/4t}\, dy,\quad x\in {\mathbb R},\ t>0
$$
and $I:=\{(x,t): x>0,\ t>0\}$. Recall that F. Tr\`eves \cite[p. 433]{treves} has proposed the following formula:
\begin{align}\label{out-zxc}
u(x,t)=\frac{1}{2}\int^{x}_{-x}\frac{\partial E_{1}}{\partial y}(y,t)u_{0}(x-y)\, dy-\int^{t}_{0}\frac{\partial E_{1}}{\partial t}(x,t-s)g(s)\, ds,\quad x>0,\ t>0,
\end{align}
for the solution of the following mixed initial value problem:
\begin{align}\label{heat-prvi}
\begin{split}
& u_{t}(x,t)=u_{xx}(x,t),\ \ x>0,\ t>0; \\
& u(x,0)=u_{0}(x), \ x>0,\ \  u(0,t)=g(t), \  t>0.
\end{split}
\end{align}
Consider the case in which $g(t)\equiv 0.$ Suppose that $0<T<\infty$ and the function $u_{0} : [0,\infty) \rightarrow {\mathbb C}$ is bounded Bohr $(I_{0},c)$-almost periodic, resp. bounded $(I_{0},c)$-uniformly recurrent, for a certain non-empty subset $I_{0}$ of $[0,\infty).$
Set $I':=I_{0} \times (0,T).$ If ${\mathbb D}$ is any unbounded subset of $I$ which has the property that 
$$
\lim_{|(x,t)| \rightarrow +\infty, (x,t)\in {\mathbb D}}\min\Biggl(\frac{x^{2}}{4(t+T)},t\Biggr)=+\infty,  
$$
then the solution $u(x,t)$ of \eqref{heat-prvi} is ${\mathbb D}$-asymptotically $(I',c)$-almost periodic of type $1$, resp. ${\mathbb D}$-asymptotically $(I',c)$-uniformly recurrent of type $1$. This can be achieved by a careful inspection of the argumentation given in \cite[Section 3, point 2.]{marko-manuel-ap}.

3. (cf. also \cite[Theorem 3.1]{c1})
Let $(\tau_{k})$ be a  sequence in ${\mathbb R}^{n},$ $\lim_{k\rightarrow +\infty}|\tau_{k}|=+\infty$ and
\begin{align*}
BUR_{(\tau_{k});c}({\mathbb R}^{n} : X)&:=\Bigl\{F : {\mathbb R}^{n} \rightarrow X\mbox{ is bounded, continuous and } 
\\&
\lim_{k\rightarrow+\infty}\sup_{t\in {\mathbb R}}\bigl\| F(t+\tau_{k})-cf(t)\bigr\|_{\infty}=0
\Bigr\}.
\end{align*}
Equipped with the metric $d(\cdot,\cdot):=\|\cdot-\cdot\|_{\infty},$ $BUR_{(\tau_{k});c}({\mathbb R}^{n} : X)$ becomes a complete metric space. Define $I':=\{\tau_{k} : k\in {\mathbb N}\}$ 
and consider
the following Hammerstein integral equation of convolution type on ${\mathbb R}^{n}$ (see e.g., \cite[Section 4.3, pp. 170-180]{cord-int}):
\begin{align}\label{multiintegralce}
y({\bf t})= \int_{{\mathbb R}^{n}}k({\bf t}-{\bf s})G({\bf s},y({\bf s}))\, d{\bf s},\quad {\bf t}\in {\mathbb R}^{n},
\end{align}
where $G : {\mathbb R}^{n} \times X  \rightarrow X$ is $({\mathcal B},I',c)$-uniformly recurrent with ${\mathcal B}$ being the collection of all bounded subsets of $X.$
Suppose, further, that the set $\{G({\bf t} , B) : {\bf t}\in {\mathbb R}^{n}\}$ is bounded for any bounded subset $B$ of $X$ as well as that 
there exists a finite real constant $L>0$ such that \eqref{lajbaha} holds with $X=Z=Y,$ for every $y,\ y'\in {\mathbb R}^{n},$
and
\eqref{lajbaha-ujs} holds with 
the term $F({\bf t} ; x)$ replaced with the term $y({\bf t})$ for any function $y\in BUC_{(\tau_{k});c}({\mathbb R}^{n} : X).$ 
Applying 
Proposition \ref{convdiaggacece} and Theorem \ref{episkop-jovan}(ii), we get that the mapping 
$$
BUR_{(\tau_{k});c}({\mathbb R}^{n} : X) \ni y \mapsto \int_{{\mathbb R}^{n}}k({\cdot}-{\bf s})G({\bf s},y({\bf s}))\, d{\bf s} \in BUR_{(\tau_{k});c}({\mathbb R}^{n} : X)
$$
is well defined. If we additionally assume that $L\int_{{\mathbb R}^{n}}|k({\bf t})| \, d{\bf t}<1,$ then an application of
the Banach contraction principle
shows that there exists a unique solution of \eqref{multiintegralce} which belongs to the space $BUR_{(\tau_{k});c}({\mathbb R}^{n} : X).$

We can similarly analyze the following integral equation
\begin{align*}
y({\bf t})= \int_{{\mathbb R}^{n}}G({\bf t},{\bf s},y({\bf s}))\, d{\bf s},\quad {\bf t}\in {\mathbb R}^{n},
\end{align*}
provided that $G: {\mathbb R}^{2n} \times X  \rightarrow X$ satisfies certain assumptions and
there exists a costant $L\in (0,1)$ such that
$$
\| G({\bf t},{\bf s},x)-G({\bf t},{\bf s},y)\| \leq L\|x-y\|,\quad {\bf t}, \ {\bf s}\in {\mathbb R}^{n}; \ x, \ y\in X.
$$
Details can be left to the interested readers.

We close the paper with the observation that the class of
multi-dimensional $({\bf \omega},c)$-periodic type functions will be considered in our forthcoming paper \cite{nova-msemice}.

\end{document}